\pgfplotsset{compat=1.12}
\definecolor{db}{RGB}{0, 0, 130}
\numberwithin{equation}{section}
\newcommand{\R}{\mathbb{R}}
\newcommand{\N}{\mathbb{N}}
\newcommand{\EE}{\mathbb{E}}
\newcommand{\PP}{\mathbb{P}}
\newcommand{\diff}{\, d}
\newcommand{\B}{\mathcal{B}}
\newcommand{\C}{\mathbf{C}}
\newcommand{\M}{\mathbf{M}}
\def \limbashaut#1#2#3{\mathrel{\mathop{\kern 0pt#1}\limits_{#2}^{#3}}}
\DeclareMathOperator{\var}{Var} 
\def\namedlabel#1#2{\begingroup
    #2%
    \def\@currentlabel{#2}%
    \phantomsection\label{#1}\endgroup
}
\newcommand{\myitem}[1]{%
\item[#1]\protected@edef\@currentlabel{#1}%
}
\newtheorem{definition}{Definition}[section]
\newtheorem{theorem}[definition]{Theorem}
\newtheorem{corollary}[definition]{Corollary}
\newtheorem{lemma}[definition]{Lemma}
\newtheorem{proposition}[definition]{Proposition}
\newtheorem{remark}[definition]{Remark}
\author{Ludovic Gouden\`ege\footnote{F\'ed\'eration de Math\'ematiques de CentraleSup\'elec, CNRS FR-3487; Universit\'e Paris-Saclay, France, \texttt{goudenege@math.cnrs.fr}.}  \and
El Mehdi Haress\footnote{Universit\'e Paris-Saclay, CentraleSup\'elec, MICS, France, \texttt{el-mehdi.haress@centralesupelec.fr}. EH acknowledges the support of the Labex de Math\'ematiques Hadamard.} \and 
Alexandre Richard\footnote{Universit\'e Paris-Saclay, CentraleSup\'elec and CNRS FR-3487, France, \texttt{alexandre.richard@centralesupelec.fr.}\newline
This work is supported by the SIMALIN project ANR-19-CE40-0016 and the SDAIM project ANR-22-CE40-0015 from the French National Research Agency.
}} 
\date{\empty}
\title{ \Large{\textbf{Numerical approximation of SDEs with fractional noise and distributional drift}}} 
\begin{document}

\maketitle

\begin{abstract}
We study the numerical approximation of SDEs with singular drifts (including distributions) driven by a fractional Brownian motion.
Under the Catellier-Gubinelli condition that imposes the regularity of the drift to be strictly greater than $1-1/(2H)$, we obtain an explicit rate of convergence of a tamed Euler scheme towards the SDE, extending results for bounded drifts. Beyond this regime, when the regularity of the drift is $1-1/(2H)$, we derive a non-explicit rate. As a byproduct, strong well-posedness for these equations is recovered.
Proofs use new regularising properties of discrete-time fBm and a new critical Gr\"onwall-type lemma. We present examples and simulations.
\end{abstract}

\noindent\textit{\textbf{Keywords and phrases:} Numerical approximation, regularisation by noise,  fractional Brownian motion.} 

\medskip

\noindent\textbf{MSC2020 subject classification: }60H10, 65C30, 60G22, 60H50, 34A06.

\section{Introduction}\label{secintrosde}
We are interested in the numerical approximation of the following $d$-dimensional SDE:
\begin{align}\label{eqSDE}
 X_t = X_0 + \int_0^t b(X_s) \diff s + B_t  , \quad t\in [0,1],
\end{align}
where $X_0 \in \mathbb{R}^d$, $b$ is a distribution in some nonhomogeneous Besov space $\B_\infty^\gamma:= \mathcal{B}_{\infty,\infty}^\gamma(\R^d,\R^d)$ and $B$ is an $\mathbb{R}^d$-fractional Brownian motion (fBm) with Hurst parameter $H$. When $B$ is a standard Brownian motion ($H=1/2$), this equation received a lot of attention when the drift is irregular, see for instance \cite{Zvonkin,Veretennikov} for bounded measurable drift or \cite{KrylovRockner} under some integrability condition. Strong well-posedness was obtained in those cases, which contrasts with the non-uniqueness and sometimes non-existence that can happen for the corresponding equations without noise. Distributional drifts are even allowed when the SDE is either $1$-dimensional or if weak solutions are considered \cite{BassChen,FRW,DelarueDiel,FIR}.
In case $B$ is a fractional Brownian motion, several recent results were proven by \citet{NualartOuknine} for H\"older continuous drifts, then by \citet{Banos}, \citet{CatellierGubinelli}, \citet{GHM}, \citet{anzeletti2021regularisation} and \citet{GaleatiGerencser}, which show in particular that strong well-posedness holds    for some classes of distributional drifts when the Hurst parameter is smaller than $1/2$.

The simplest approximation scheme for \eqref{eqSDE} is given by the Euler scheme with a time-step~$h$
\begin{align*}
X_t^{h} = X_0+ \int_0^t b(X^{h}_{r_h}) \diff r + B_t  , \quad t\in [0,1],
\end{align*}
where $r_h = h \lfloor \frac{r}{h} \rfloor$. 
For the numerical analysis of Brownian SDEs with smooth coefficients, including the previous scheme and higher-order approximations, we point to a few classical works by Pardoux, Talay and Tubaro~\cite{pardoux1985discretization,talay1990expansion}, see also \cite{kloeden1992stochastic}. The strong error $ \|X_{t} - X_{t}^h\|_{L^m(\Omega)}$ is known to be of order $h$ (and $h^{1/2}$ when the noise is multiplicative). 
When the coefficients are irregular, \citet{dareiotis2021quantifying} obtained recently a strong error with the optimal rate of order $1/2$ for merely bounded measurable drifts, even if the noise is multiplicative. This was extended to integrable drifts with a Krylov-R\"ockner condition by \citet{le2021taming}. We also refer to the review \cite{szolgyenyi2021stochastic} and references therein for discontinuous coefficients, and the recent weak error analysis of \citet{JourdainMenozzi} for integrable drifts. Besides, we mention that when the drift is a distribution in a Bessel potential space with negative regularity,  \citet{de2019numerical} have obtained a rate of convergence for the so-called virtual solutions of a (Brownian) SDE, using a $2$-step mollification procedure of the drift.

Let us now recall briefly what is known when $B$ is a fractional Brownian motion. First, \citet{NN} considered one-dimensional equations with $H>1/2$, smooth coefficients and multiplicative noise, \emph{i.e.} the more general case with $B$ replaced by a symmetric Russo-Vallois~\cite{RussoVallois} integral $\int_{0}^t \sigma(X_{s}) \, d^oB_{s}$ in \eqref{eqSDE}. They proved that the rate of convergence for the strong error is exactly of order $2H-1$. 
 Then \citet{HuLiuNualart} introduced a modified Euler scheme to obtain an improved convergence rate of order $2H-1/2$, still in the multiplicative case. They also derived an interesting weak error rate of convergence. 
Recently, \citet{butkovsky2021approximation} considered \eqref{eqSDE} with any Hurst parameter $H\in (0,1)$ and drifts which are H\"older continuous functions in $\mathcal{C}^\gamma$, for $\gamma \in [0,1]$. They obtained the strong error convergence rate $h^{(1/2+\gamma H) \wedge 1 - \varepsilon}$, which holds whenever $\gamma\geq 0$ and $\gamma>1-1/(2H)$. The latter condition is optimal in the sense that it corresponds to the existence and uniqueness result for \eqref{eqSDE} established in \cite{CatellierGubinelli}. Our main contribution in this paper is an extension of their result to distributional drifts, \emph{i.e.} to negative values of $\gamma$, including the threshold $\gamma=1-1/(2H)$. For $\gamma$ to be negative under the condition $\gamma\geq 1-1/(2H)$, we will need to assume that $H<1/2$.

~

First, we state that if $b$ is in the Besov space $\mathcal{B}^\gamma_{\infty}$ and that $\gamma > 1/2 - 1/(2H)$, then there exists a weak solution $(X,B)$ to \eqref{eqSDE} which has some H\"older regularity. This result is a direct extension of  \cite[Theorem 2.8]{anzeletti2021regularisation} to any dimension $d\geq1$ and was also recently extended to time-dependent drifts in \cite{GaleatiGerencser}. The condition $\gamma > 1/2 - 1/(2H)$ allows negative values of $\gamma$ and therefore $b$ can be a genuine distribution. Solutions to \eqref{eqSDE} are then understood as processes of the form $X_{t} = X_{0} + K_{t} + B_{t}$, where $K_{t}$ is the limit in probability of $\int_{0}^t b^k(X_{s})\, ds$, for any approximating sequence $(b^k)_{k\in \N}$.

Second, to approximate numerically \eqref{eqSDE}, we consider the  Catellier-Gubinelli \cite{CatellierGubinelli} regime, namely that $b\in \mathcal{B}^\gamma_{\infty}$ with $\gamma > 1-1/(2H)$. For a time-step $h$ and a sequence $(b^k)_{k\in \N}$ that converges to $b$ in a Besov sense, we consider the following tamed Euler scheme defined on the same probability space and with the same fBm $B$ as $X$ by
\begin{align}\label{defEulerSDE}
X_t^{h,k} = X_0+ \int_0^t b^k(X^{h,k}_{r_h}) \diff r + B_t ,
\end{align}
where $r_h = h \lfloor \frac{r}{h} \rfloor$. We prove the following inequality: for $h\in (0,1)$ and $k \in \N$,
\begin{align}\label{eq:firstbound-0}
\sup _{t \in [0,1] }\big\|X_{t}-X_{t}^{h,k}\big\|_{L^{m}(\Omega)}
&  \leq C \, \Big( \|b-b^k\|_{\mathcal{B}_\infty^{\gamma-1}} + \|b^k\|_\infty h^{\frac{1}{2}-\varepsilon} +  \|b^k\|_\infty \|b^k\|_{\mathcal{C}^1} h^{1-\varepsilon} \Big).
\end{align}
This error bound is the sum of a stability bound between processes with respective drifts $b$ and $b^k$, and a numerical error for an SDE with drift $b^k$. Then, letting $b^k$ converge to $b$ and choosing carefully $k$ as a function of $h$, we get the following rate of convergence
\begin{align}\label{eq:rate-0}
\forall h\in (0,1), \quad \sup _{t \in [0,1] }\big\|X_{t}-X_{t}^{h,k}\big\|_{L^{m}(\Omega)} & \leq C h^{\frac{1}{2(1-\gamma)}-\varepsilon}.
\end{align}
Observe that the rate $\frac{1}{2(1-\gamma)}$ is always greater than $H$ and increases with the regularity of the drift.
The general version of this result is presented in Theorem \ref{thmmain-SDE} and discussed thereafter. In particular, it can be shown that given the bound \eqref{eq:firstbound-0}, one cannot obtain a better rate than the one in \eqref{eq:rate-0}.
This extends the result of \citet{butkovsky2021approximation} to negative values of $\alpha \equiv \gamma $, and matches the $1/2-\varepsilon$ rate of convergence obtained in the  case $\gamma=0$, when $b$ is a bounded measurable function. As a consequence of this strong-error approximation, we recover previous results from \cite{CatellierGubinelli,GHM} on the strong existence and pathwise uniqueness of solutions of the SDE \eqref{eqSDE}.

Finally, beyond the Catellier-Gubinelli regime, we investigate the limit case where $b \in \mathcal{B}_p^\gamma$ for some $p \in [1,+\infty)$ with $\gamma-d/p = 1-1/(2H)$, to obtain a non-explicit rate of convergence for the strong error of the numerical approximation. As a byproduct, we again deduce that $X$ is a strong solution and that it is pathwise unique in a class of H\"older continuous processes. 

~

Our method relies on a sensitivity analysis with respect to the drift for stochastic evolutions with fractional Brownian motion. A similar idea appears in \cite{GHM} for controlling the difference between two solutions of SDEs with respective drifts $b$ and $b^k$ by some Besov norm of $b-b^k$. However here, we compare the solution of the SDE with drift $b$ and the tamed-Euler approximation with drift $b^k$. We control the difference with respect to the time-step $h$ and Besov norms of $b-b^k$ and $b^k$. In particular, our method does not rely on the Girsanov transform and allows us to treat the limit case. The proofs exploit several new regularisation properties of the $d$-dimensional fBm and of the discrete-time fBm using the stochastic sewing Lemma developed by \citet{le2020stochastic} and somehow extend Davie's lemma~\cite[Prop. 2.1]{Davie} to this framework. Namely, for functions $f$ in Besov spaces of negative regularity (resp. bounded $f$ for the discrete-time fBm), we obtain upper bounds on the moments of quantities such as $\int_{s}^t f(x+B_{r})\, dr$ (resp. $\int_{s}^t f(x+B_{r_h})\, dr$) in terms of $x$, $(t-s)$ and $h$, see Propositions~\ref{propregfBm},~\ref{propbound-E1-SDE} and \ref{propnewbound-E2}. 

The limit case $\gamma-d/p = 1-1/(2H)$, $p<+\infty$, requires a version of the stochastic sewing lemma with critical exponents that induces a logarithmic factor in the result (see \cite[Theorem 4.5]{athreya2020well} and \cite[Lemma  4.10]{FHL}). We use this lemma in Proposition~\ref{propbound-E1-SDE-critic} to prove an upper bound on the moments of $\int_s^t b^k(X_r)-b^k(X^{h,k}_r) \, dr$. This leads to the following bound for $\mathcal{E}^{h,k} = X - X^{h,k}$,
\begin{align*}
\| \mathcal{E}^{h,k}_{t} - \mathcal{E}^{h,k}_{s} \|_{L^m} &\leq C \, \left(\|\mathcal{E}^{h,k}\|_{L^\infty_{[s,t]}L^m} + \epsilon(h,k) \right) \, (t-s)^{\frac{1}{2}}  \\
&\quad + C  \, \Big( \|\mathcal{E}^{h,k}\|_{L^\infty_{[s,t]}L^m}+\epsilon(h,k) \Big) \, \left| \log \big( \|\mathcal{E}^{h,k}\|_{L^\infty_{[s,t]}L^m} + \epsilon(h,k) \big) \right| \, (t-s),
\end{align*}
for some $\epsilon(h,k) = o(1)$.
We then introduce a critical Gr\"onwall-type lemma with logarithmic factors (Lemma \ref{lemrate-critical}) which yields a control of $\|\mathcal{E}^{h,k}\|_{L^\infty_{[0,1]}L^m}$ by a power of $\epsilon(h,k)$.

\paragraph{Organisation of the paper.}
We start with definitions, notations and preliminary results in Section \ref{secnumerical-analysis-SDE}, then state our main results in Section \ref{secmain}. 
The strong convergence of the numerical scheme \eqref{defEulerSDE} to the solution of \eqref{eqSDE} is established in Section \ref{secoverview-SDE}. This proof relies strongly on estimates and regularisation lemmas which are stated and proven in Section \ref{secstochastic-sewing}.
In Section \ref{secsimulations}, we provide examples of SDEs that can be approximated with our result. 
We also run simulations of the scheme \eqref{defEulerSDE} and observe empirical rates of convergence which are consistent with the theoretical results. In Appendix \ref{app:sec:regfBm}, we gather some proofs based of regularisation properties of the fBm and in Appendix \ref{app:sec:gronwall1} we prove the critical Gr\"onwall-type lemma that is useful for the limit case.

\section{Framework and results}\label{secnumerical-analysis-SDE}

\subsection{Notations and definitions}\label{secnotations-SDE}

In this section, we define notations that are used throughout the paper.

\begin{itemize}
\item On a probability space $(\Omega,\mathcal{F},\mathbb{P})$, we denote by $\mathbb{F} = (\mathcal{F}_{t})_{t\in [0,1]}$ a filtration that satisfies the usual conditions. 
\item The conditional expectation given $\mathcal{F}_{t}$ is denoted by $\mathbb{E}^{t}$ when there is no risk of confusion on the underlying filtration.

\item The $L^m(\Omega)$ norm, $m \in [1,\infty]$, of a random variable $X$ is denoted by $\|X\|_{L^m}$ and the space $L^m(\Omega)$ is simply denoted by $L^m$.

\item  We write $L^\infty_{I}$ for the space of bounded measurable functions on a subset $I$ of $[0,1]$ and $L^\infty_I L^m \colon= L^\infty(I, L^m(\Omega))$. For a Borel-measurable function $f\colon\R^d\to \R^d$, denote the classical $L^\infty$ and $\mathcal{C}^1$ norms of $f$ by 
$\|f \|_\infty = \sup_{x \in \R^d} |f(x)|$ and $\|f\|_{\mathcal{C}^1} = \| f \|_\infty + \sup_{x \neq y} \frac{|f(x)-f(y)|}{|x-y|}$. 

\item We denote by $\mathcal{C}^\alpha_{I}$ the space of H\"older continuous functions and when $\alpha=0$, we use the notation $\mathcal{C}_{I}$.

\item For all $S,T \in [0,1]$, define the simplex $\Delta_{S,T} = \{ (s,t) \in [S,T], s < t \}$.

\item For a process $Z\colon \Delta_{0,1} \times \Omega \rightarrow \mathbb{R}^d$, we write
\begin{equation*}
[Z]_{\mathcal{C}^\alpha_{I} L^{m}} := \sup_{\substack{s,t \in I \\ s < t }}  \frac{\| Z_{s,t} \|_{L^{m}}}{|t-s|^{\alpha}} 
~~\mbox{and}~~
\|Z\|_{L^\infty_I L^{m}} := \sup_{\substack{s \in I  }} \| Z_{0,s}\|_{L^{m}}.
\end{equation*}

\item In applications of the stochastic sewing lemma, we will need to consider increments of $Z$, which are given for any triplet of times $(s, u, t)$ such that $ s \leq u \leq t $ by $\delta Z_{s, u, t}\colon=Z_{s, t}-Z_{s, u}-Z_{u, t}$.

\item Finally, given a process $Z\colon [0,1] \times \Omega \rightarrow \mathbb{R}^d$, $\alpha \in (0,1]$, $m \in [1,\infty)$ and $q \in [1,\infty]$, we consider the following seminorm: for any $0 \leq s \leq t \leq 1$,
\begin{align}\label{eqdefbracket}
   [Z]_{\mathcal{C}_{[s,t]}^{\alpha}L^{m,q}}\colon= \sup_{(u,v) \in \Delta_{s,t}}\frac{\|\EE^u[|Z_v-Z_u|^m]^{\frac{1}{m}}\|_{L^q}}{(v-u)^\alpha},
\end{align}
where the conditional expectation is taken with respect to the filtration the space is equipped with. 
By the tower property and Jensen's inequality for conditional expectation, we know that
\begin{align} \label{eqboundSeminorms}
   [Z]_{\mathcal{C}_{[s,t]}^{\alpha} L^m}= [Z]_{\mathcal{C}^{\alpha}_{[s,t]} L^{m,m}} \leq  [Z]_{\mathcal{C}_{[s,t]}^{\alpha} L^{m,\infty}}.
\end{align}

\item For $f \colon [0,1] \times \Omega \rightarrow \R$, the process $\delta f: \Delta_{0,1} \times \Omega \rightarrow \R^d$ is defined as $\delta f_{s,t} = f_t - f_s$. 
Moreover, for $\alpha \in[0,1]$, $m \in [2,\infty)$ and $I$ a subset of $[0,1]$, we denote by $\mathcal{C}^\alpha_{I} L^m$ the space of $L^m$-valued mappings that are $\alpha$-H\"older continuous on $I$, that is $f \in \mathcal{C}^\alpha_{I} L^m$ if
\begin{align*}
 [\delta f ]_{\mathcal{C}^\alpha_{I} L^m} =  \sup_{\substack{s,t \in I \\ t \neq s }} \frac{\| f_t-f_s \|_{L^m}}{|t-s|^{\alpha}} < +\infty.
\end{align*}
For simplicity, we write $[f]_{\mathcal{C}^\alpha_{I} L^m} \equiv  [\delta f ]_{\mathcal{C}^\alpha_{I} L^m} $ and for $q \in [1,\infty]$, we write 
\begin{align*}
 [ f ]_{\mathcal{C}^\alpha_{I} L^{m,q}} :=  \sup_{\substack{s,t \in I \\ t \neq s }} \frac{\| \EE^s [|f_t-f_s|^m ]^{\frac{1}{m}} \|_{L^q}}{|t-s|^{\alpha}} < +\infty.
\end{align*}

\end{itemize}

\paragraph{Heat kernel.} For any $t>0$, denote by $g_{t}$ the Gaussian kernel on $\mathbb{R}^d$ with variance $t$, $g_{t}(x)=\frac{1}{(2 \pi \, t)^{d/2}} \exp \left(-\frac{|x|^{2}}{2 t}\right)$, 
and by $G_{t}$ the associated Gaussian semigroup on $\mathbb{R}^d$: for $f\colon \R^d\to \R^d$,
\begin{align}\label{eqsemi-group-gaussian}
G_t f(x) = \int_{\mathbb{R}^d} g_t(x-y) \, f(y) \diff y .
\end{align}

\paragraph{Besov spaces.}
We use the same definition of nonhomogeneous Besov spaces as in \cite{bahouri2011fourier}, which we write here for any dimension $d$. 
Let $\chi,\varphi\colon \R^d\to \R$ be the smooth radial functions which are given by \cite[Proposition 2.10]{bahouri2011fourier}, with $\chi$ supported on a ball while $\varphi$ is supported on an annulus. Let $v_{-1}$ and $v$ respectively be the inverse Fourier transform of $\chi$ and $\varphi$. Denote by $\mathcal{F}$ the Fourier transform and $\mathcal{F}^{-1}$ its inverse.

The nonhomogeneous dyadic blocks $\Delta_j, j\in \N\cup\{-1\}$ are defined for any $\R^d$-valued tempered distribution $u$ by
\begin{align*}
\Delta_{-1} u  = \mathcal{F}^{-1} \left(\chi \mathcal{F}u \right)  ~~\text{ and }~~ \Delta_{j}u  = \mathcal{F}^{-1} \left(\varphi(2^{-j}\cdot) \mathcal{F}u \right)  ~\text{ for } j \ge 0.
\end{align*}
Let $\gamma \in \R$ and $p \in [1, \infty]$. We denote by $\mathcal{B}_p^\gamma$ the nonhomogeneous Besov space $\mathcal{B}_{p,\infty}^\gamma(\mathbb{R}^d, \R^d)$  of $\R^d$-valued tempered distributions $f$ such that 
\begin{align*}
\| f \|_{\mathcal{B}_p^\gamma} = \sup_{j \ge -1} 2^{j \gamma} \| \Delta_j f \|_{L^p(\R^d)} < \infty .
\end{align*}

Let $1\leq p_1 \leq p_2 \leq \infty$. The space $\mathcal{B}_{p_1}^\gamma$ continuously embeds into $\mathcal{B}^{\gamma-d(1/p_1-1/p_2)}_{p_2}$, which we write as ${\mathcal{B}_{p_1}^\gamma \hookrightarrow \mathcal{B}^{\gamma-d(1/p_1-1/p_2)}_{p_2}}$, see e.g.  
\cite[Prop.~2.71]{bahouri2011fourier}.

~

Finally, we denote by $C$ a constant that can change from line to line and that does not depend on any parameter other than those specified in the associated lemma, proposition or theorem. When we want to make the dependence of $C$ on some parameter $a$ explicit, we will write $C(a)$.

~

To give a meaning to Equation \eqref{eqSDE} with distributional drift, we first need to precise in which sense those drifts are approximated. 

\begin{definition}\label{defconv-gamma-}
Let $\gamma \in \mathbb{R}$ and $p \in [1,\infty]$. We say that a sequence of smooth bounded functions $(b^k)_{k \in \mathbb{N}}$ converges to $b$ in $\B_p^{\gamma-}$ as $n$ goes to infinity if
\begin{equation}\label{eqconv-in-gamma-}
\begin{cases}
\displaystyle \sup_{k \in \mathbb{N}} \|b^k\|_{\B_p^\gamma} \leq \|b\|_{\B_p^\gamma} < \infty, \\
\displaystyle \lim_{n \rightarrow \infty} \|b-b^k\|_{\B_p^{\gamma'}} = 0, \quad \forall \gamma' < \gamma.
\end{cases}
\end{equation}
\end{definition}
Lemma \ref{lemreg-S} gives an explicit example of sequence $b^k$ that converge to $b$ in $\B_p^{\gamma-}$, namely $b^k=G_{1/k}b$.

Following \cite{NualartOuknine}, in dimension $d=1$, we recall the notion of $\mathbb{F}$-fBm which extends the classical definition of $\mathbb{F}$-Brownian motion. There exists a one-to-one operator $\mathcal{A}_{H}$ (which can be written explicitly in terms of fractional derivatives and integrals, see \cite[Definition 2.3]{anzeletti2021regularisation}) such that for $B$ an fBm, the process $W\colon=\mathcal{A}_{H}B$ is a Brownian motion. Then we say that $B$ is an $\mathbb{F}$-fBm if $W$ is an $\mathbb{F}$-Brownian motion. In any dimension $d \ge 1$, we say that $B$ is an $\R^d$-valued $\mathbb{F}$-fBm, if the components of $B$ are independent and each of them is an $\mathbb{F}$-fBm.

\begin{definition}\label{defsol-SDE}
Let $\gamma \in \mathbb{R}$, $b \in \mathcal{B}_\infty^\gamma$, and $X_0 \in \mathbb{R}^d$. As in \cite{anzeletti2021regularisation}, we define the following notions.
\begin{itemize}
\item \emph{Weak solution:} 
a couple $((X_t)_{t \in [0,1]},(B_t)_{t \in 
[0,1]})$ defined on some filtered probability space  
$(\Omega,\mathcal{F},\mathbb{F},\mathbb{P})$ is a weak solution to \eqref{eqSDE} on 
$[0,1]$, with 
initial condition $X_0$, if 
\begin{itemize}[nosep,leftmargin=1em,labelwidth=*,align=left]
\item $B$ is an $\R^d$-valued $\mathbb{F}$-fBm\text{;}

\item $X$ is adapted to $\mathbb{F}$\text{;}

\item there exists an $\R^d$-valued process $(K_t)_{t \in [0,1]}$ such that, a.s.,
\begin{equation}\label{solution1}
X_t=X_0+K_t+B_t  ~\text{ for all } t \in [0,1] \text{;}
\end{equation}

\item for every sequence $(b^k)_{k\in \mathbb{N}}$ of smooth bounded functions converging to $b$ in $\mathcal{B}^{\gamma-}_\infty$, we have
\begin{equation}\label{approximation2}
       \sup_{t\in [0,1]}\left|\int_0^t b^k(X_r) dr 
       -K_t\right|   \underset{n\rightarrow \infty}{\longrightarrow} 0 ~\text{ in probability}.
\end{equation}
\end{itemize}
If the couple is clear from the context, we simply say that  $(X_t)_{t \in [0,1]}$ is a weak 
solution.

\item \emph{Pathwise uniqueness:} As in the classical literature on SDEs, we say that pathwise uniqueness holds if for any two solutions $(X,B)$ and $(Y,B)$ defined on the same filtered probability space with the same
fBm $B$ and same initial condition $X_0 \in \mathbb{R}^d$, $X$ and $Y$
 are indistinguishable.

\item \emph{Strong solution:} A weak solution $(X,B)$ such that $X$ is 
$\mathbb{F}^B$-adapted is called a strong solution, where $\mathbb{F}^B$ denotes the filtration generated by $B$.

\end{itemize}
\end{definition}

\subsection{Weak existence}\label{secprem}
In the regime on $\gamma \in \mathbb{R}$, $H < 1/2$ such that
\begin{align} \label{eqassumptionweak}
0 > \gamma> \frac{1}{2} -\frac{1}{2H}, \tag{H1}
\end{align}
there is existence of a weak solution to \eqref{eqSDE}. This was proven in dimension $1$ in \cite[Theorem 2.8]{anzeletti2021regularisation}, then extended to the multidimensional, time-dependent setting in \cite[Theorem 8.2]{GaleatiGerencser}. In the following theorem, we add a statement on the regularity of the solution. We omit the proof, which follows the same lines as the proof of \cite[Theorem 2.8]{anzeletti2021regularisation}.

\begin{theorem}[\cite{anzeletti2021regularisation,GaleatiGerencser}]  \label{thWP}
Let $\gamma \in \mathbb{R}$ and $b \in \mathcal{B}_\infty^\gamma$. Assume that \eqref{eqassumptionweak} holds. Then there exists a weak 
solution $X$ to \eqref{eqSDE} such that for any $m\geq 2$,
\begin{align}\label{eqregweak}
[X-B]_{\mathcal{C}^{1+H \gamma}_{[0,1]}L^{m,\infty}}<\infty.
\end{align}
\end{theorem}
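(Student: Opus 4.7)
The plan is to construct a weak solution as the limit of classical approximations. Fix a mollifying sequence $(b^n)_{n\in\mathbb{N}}$ of smooth bounded functions converging to $b$ in $\mathcal{B}_p^{\gamma-}$ in the sense of Definition~\ref{defconv-gamma-}. Since each $b^n$ is Lipschitz, the SDE
\begin{equation*}
X^n_t = X_0 + \int_0^t b^n(X^n_r)\,dr + B_t
\end{equation*}
admits, on any filtered probability space carrying an $\mathbb{F}$-fBm $B$, a unique strong solution. Set $K^n_t = X^n_t - X_0 - B_t$. The core of the argument is to show that $[K^n]_{\mathcal{C}^{1+H(\gamma-d/p)}_{[0,1]}L^{m,\infty}}$ is bounded uniformly in $n$.

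For the uniform regularity, I would apply the stochastic sewing lemma to the germ
\begin{equation*}
A_{s,t} = \mathbb{E}^s \int_s^t b^n(X^n_r)\,dr .
\end{equation*}
Conditionally on $\mathcal{F}_s$, the fBm increment $B_r - \mathbb{E}^s B_r$ is Gaussian with variance comparable to $(r-s)^{2H}$, so heat-kernel / Besov duality estimates of the type used in Section~\ref{secstochastic-sewing} (in particular the regularisation-by-fBm estimate underlying Proposition~\ref{propregfBm}) yield
\begin{equation*}
\bigl\|\mathbb{E}^u|A_{u,v}|^m\bigr\|_{L^\infty}^{1/m} \le C\,\|b^n\|_{\mathcal{B}_p^\gamma}\,(v-u)^{1+H(\gamma-d/p)},
\end{equation*}
and an analogous bound for $\delta A_{s,u,t}$ with a strictly larger time exponent. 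Assumption \eqref{eqassumptionweak} guarantees $1+H(\gamma-d/p) > 1/2$, which is precisely the regime in which the stochastic sewing lemma applies. One obtains simultaneously that $K^n$ is the sewing of $A$ and the uniform bound
\begin{equation*}
\sup_{n\in\mathbb{N}} [K^n]_{\mathcal{C}^{1+H(\gamma-d/p)}_{[0,1]}L^{m,\infty}} \le C \,\|b\|_{\mathcal{B}_p^\gamma}.
\end{equation*}

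From this and Kolmogorov's continuity criterion, the laws of $(X^n,B)$ are tight in $\mathcal{C}([0,1];\mathbb{R}^{2d})$. Using Prokhorov and Skorokhod representation, pass to a subsequence and a new probability space carrying processes $(\widetilde X^n, \widetilde B^n)$ converging almost surely, uniformly on $[0,1]$, to some $(X,B)$, with $B$ still an $\mathbb{F}$-fBm for an appropriate filtration. The uniform H\"older bound transfers by Fatou to $K := X - X_0 - B$, giving \eqref{eqregweak}.

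The last and most delicate point is to verify the approximation property \eqref{approximation2} for \emph{every} mollifying sequence, not just the one used to build $X$. For the construction sequence this follows from the uniform convergence. For an arbitrary $(\tilde b^k)$ converging to $b$ in $\mathcal{B}_p^{\gamma-}$, one uses the stochastic sewing lemma once more, applied to the limiting process $X$ whose H\"older regularity \eqref{eqregweak} is now known, to bound
\begin{equation*}
\Bigl\|\sup_{t\in[0,1]}\Bigl|\int_0^t (\tilde b^k - b^n)(X_r)\,dr\Bigr|\Bigr\|_{L^m} \le C\,\|\tilde b^k - b^n\|_{\mathcal{B}_p^{\gamma'}}
\end{equation*}
for some $\gamma' < \gamma$, and conclude by sending $n,k \to \infty$. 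The main obstacle, as usual in this framework, is obtaining the uniform germ estimate in the first step: the stochastic-sewing/Besov bound has to be proven for $X^n$ before one knows anything about its H\"older regularity, which is circumvented by bootstrapping from the elementary bound $\|X^n_t - X^n_s - (B_t-B_s)\|_{L^m}\le \|b^n\|_\infty\,(t-s)$ available for smooth $b^n$, and then improving it via sewing up to the sharp exponent $1+H(\gamma-d/p)$ using uniform Besov norms rather than $L^\infty$ norms of $b^n$.
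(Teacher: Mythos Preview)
Your proposal is correct and follows the same route the paper intends: the paper explicitly omits the proof and refers to \cite[Theorem~2.8]{anzeletti2021regularisation}, whose argument is precisely the tightness-of-approximations scheme you describe (solve with mollified drift, get uniform $\mathcal{C}^{1+H(\gamma-d/p)}L^{m,\infty}$ bounds on $K^n$ via stochastic sewing, extract a limit by Prokhorov--Skorokhod, transfer the H\"older bound by Fatou, and identify the approximation property \eqref{approximation2} using the regularisation estimate on the limit). One small remark: the germ actually used in the reference and in Proposition~\ref{propregfBm} is the freeze $A_{s,t}=\int_s^t b^n(B_r+\psi_s)\,dr$ rather than the conditional expectation you wrote; the freeze version has the advantage that $\|A_{s,t}\|$ is bounded by $\|b^n\|_{\mathcal{B}_p^\gamma}(t-s)^{1+H(\gamma-d/p)}$ without any a priori knowledge of $\psi$, so the bootstrapping you mention in the last paragraph is cleaner (the regularity of $\psi$ only enters $\delta A_{s,u,t}$, and one closes by taking $T-S$ small, exactly as in the argument around \eqref{equniqueness-S-T} of Appendix~\ref{appextend-uniqueness}).
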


The condition \eqref{eqassumptionweak} covers both the regime $\gamma > 1-1/(2H)$, referred to sub-critical case hereafter, for which there is strong existence \cite{CatellierGubinelli,GHM}, and the limit case $b \in \mathcal{B}_p^\gamma$ for some $p <+\infty$ and $\gamma-d/p=1-1/(2H)$, for which strong existence is only known in dimension one \cite{anzeletti2021regularisation}.

\begin{remark}\label{rk:regweaksol}
In the following, we will assume a condition more restrictive than \eqref{eqassumptionweak}, 
namely that $\gamma\geq 1-1/(2H)$. Under this condition and in view of the inequality~\eqref{eqboundSeminorms}, the previous theorem gives the existence of a weak solution with the following lower regularity:
\begin{align}\label{eq:weakerreg}
[X-B]_{\mathcal{C}^{\frac{1}{2}+H}_{[0,1]}L^{m}}<\infty.
\end{align}
In Theorem~\ref{thmmain-SDE} and Corollary~\ref{corbn-choice}, starting from a weak solution $X$ with regularity \eqref{eq:weakerreg}, we will show the convergence of the Euler scheme towards $X$, thus establishing uniqueness (see Corollary~\ref{corSP} for the precise statement) in the class of weak solutions with regularity~\eqref{eq:weakerreg}. Since the latter class is larger than the class of weak solutions which satisfy~\eqref{eqregweak}, this will provide a stronger uniqueness statement.
\end{remark}

\subsection{Main results}\label{secmain}
Let $(b^k)_{k \in \mathbb{N}}$ be a sequence of smooth functions that approximates $b$. Consider the tamed Euler scheme \eqref{defEulerSDE} associated to \eqref{eqSDE} with a time-step $h \in (0,1)$. The main result of this paper is the following theorem. It describes the regularity and convergence of the tamed Euler scheme.

\begin{theorem}\label{thmmain-SDE} 
Let $H < 1/2$, $\gamma \in \mathbb{R}$ satisfying
% satisfying 
\begin{align}\label{eqcond-gamma-p-H}
0 > \gamma \geq 1-\frac{1}{2H}. \tag{H2}
\end{align}
Let $m \in [2, \infty)$. Let $b \in \mathcal{B}_\infty^\gamma$ and $(b^k)_{k \in \mathbb{N}}$ be a sequence of smooth functions such that $\sup_{k \in \mathbb{N}} \|b^k\|_{\B_\infty^\gamma} \leq \|b\|_{\B_\infty^\gamma}$.
%that converges to $b$ in $\mathcal{B}_\infty^{\gamma-}$. 
Let $X_0$ be an $\mathcal{F}_0$-measurable random variable, $(X,B)$ be a weak solution to \eqref{eqSDE} which satisfies \eqref{eq:weakerreg}. Let $(X^{h,k})_{h \in (0,1), k \in \mathbb{N}}$ be the tamed Euler scheme defined in \eqref{defEulerSDE}, on the same probability space and with the same fBm $B$ as $X$. Let $\eta \in (0,H)$, $\mathcal{D}$ a sub-domain of $(0,1) \times \mathbb{N}$ and assume that
\begin{align}\label{eqassump-bn-bounded}
\sup_{(h,k)\in \mathcal{D}} \| b^k \|_{\infty} h^{\frac{1}{2}-H} < \infty \ \ \text{  and }~  \sup_{(h,k)\in \mathcal{D}  } \| b^k \|_{\mathcal{C}^1}  h^{\frac{1}{2}+H-\eta} < \infty. \tag{H3}
\end{align}

\begin{enumerate}[label=(\alph*)]
\item \underline{Regularity of the tamed Euler scheme}: It holds $\displaystyle \sup_{(h,k)\in \mathcal{D}} [X^{h,k}-B]_{\mathcal{C}^{\frac{1}{2}+H }_{[0,1]} L^{m, \infty}}  < \infty$. 
\end{enumerate}
Let $\varepsilon \in (0,1/2)$.
\begin{enumerate}
\item[(b)] \underline{The sub-critical case}: If $\gamma >1-1/(2H)$, there exists $C>0$ that depends on $m, \gamma, d, \varepsilon, \|b\|_{\mathcal{B}^\gamma_{\infty}}$ such that for all $(h,k)\in \mathcal{D}$, the following bound holds:
\begin{align}\label{eqmain-result-SDE}
\begin{split}
[X - X^{h,k}]_{\mathcal{C}^{\frac{1}{2}}_{[0,1]} L^{m}} 
&  \leq C \,  \Big( \|b-b^k\|_{\mathcal{B}_\infty^{\gamma-1}} + \|b^k\|_\infty h^{\frac{1}{2}-\varepsilon} +  \|b^k\|_\infty \|b^k\|_{\mathcal{C}^1} h^{1-\varepsilon} \Big).
\end{split}
\end{align}

\item[(c)] \underline{The limit case}: If $\gamma=1-1/(2H)$, assume further that there exists $p \in [1,+\infty)$ such that $b \in \mathcal{B}_p^{\gamma+d/p} \hookrightarrow \mathcal{B}^\gamma_{\infty}$. 
Denote $\tilde{\gamma} = \gamma+d/p$ and assume that $\sup_{k \in \mathbb{N}} \|b^k\|_{\B_p^{\tilde\gamma}} \leq \|b\|_{\B_p^{\tilde\gamma}}$. Let $\zeta \in (0,1/2)$, $\M$ be the constant given by Proposition \ref{propbound-E1-SDE-critic}, and $\delta \in (0, e^{-\M})$.
Then there exists $C>0$ that depends on $m, p, \gamma,d, \varepsilon, \zeta, \delta, \|b\|_{\B_p^{\tilde\gamma}}$ such that for all $(h,k)\in \mathcal{D}$, the following bound holds:
\begin{equation}\label{eqmain-result-SDE-critic}
\begin{split}
[X - X^{h,k}]_{\mathcal{C}^{\frac{1}{2}-\zeta}_{[0,1]} L^{m}} &  \leq  C \Big( \|b-b^k\|_{\mathcal{B}_p^{\tilde\gamma-1}} (1 + |\log(\|b-b^k\|_{\mathcal{B}_p^{\tilde\gamma-1}})|)  + \|b^k\|_\infty h^{\frac{1}{2}-\varepsilon} \\ 
& \quad +  \|b^k\|_{\mathcal{C}^1} \|b^k\|_\infty  h^{1-\varepsilon} \Big) ^{(e^{-\M}-\delta)} .
\end{split}
\end{equation}

\end{enumerate}

\end{theorem} 

\begin{remark}
\begin{itemize}[noitemsep]
\item The weak solution $X$ of Theorem \ref{thmmain-SDE} and the tamed Euler scheme $X^{h,k}$ start at the same point $X_0$, so the previous error bounds also hold for the strong error in uniform norm, since we have 
\begin{equation}\label{eqboundsup}
\sup _{t \in [0,1] }\big\|X_{t}-X_{t}^{h,k}\big\|_{L^{m}} \leq [X - X^{h,k}]_{\mathcal{C}^{\frac{1}{2}-\zeta}_{[0,1]} L^{m}} .
\end{equation}
\item Note here that we make sense of the strong error while only manipulating a weak solution since we assumed the tamed Euler scheme to be defined in the same probability space with the same driving fBm.
\item The nature of the terms that appear in the upper bounds \eqref{eqmain-result-SDE}-\eqref{eqmain-result-SDE-critic} is discussed in Section~\ref{sec:rate-discussion}.

\end{itemize}
\end{remark}

In the upper bounds \eqref{eqmain-result-SDE}-\eqref{eqmain-result-SDE-critic}, it is important to choose carefully the sequence $(b^k)_{k \in \mathbb{N}}$ to obtain a good rate of convergence of the numerical scheme. The following corollary states assumptions on $(b^k)_{k \in \mathbb{N}}$ under which we obtain an explicit rate of convergence. These assumptions \eqref{eqbn-inf}-\eqref{eqbn-inf}-\eqref{eqbn-b}-\eqref{eqbn-b-critic} hold for example if $b^k= G_{1/k} b$ for $k \in \mathbb{N}^*$ (see Lemma \ref{lemreg-S}).

\begin{corollary}\label{corbn-choice}
Let the assumptions of Theorem \ref{thmmain-SDE} hold, and recall that $(X,B)$ is a weak solution to \eqref{eqSDE} which satisfies \eqref{eq:weakerreg} and $(X^{h,k})_{h \in (0,1), k \in \mathbb{N}}$ is the tamed Euler scheme defined on the same probability space and with the same fBm $B$.
Assume that $(b^k)_{k \in \mathbb{N}}$ satisfies for any $k \in \mathbb{N}^*$
\begin{align}
&\|b^k\|_\infty  \leq C\, \|b\|_{\mathcal{B}_\infty^\gamma}  \ k^{-\frac{1}{2}\gamma }, \label{eqbn-inf} \\
&\| b^k \|_{\mathcal{C}^1}  \leq C\, \|b\|_{\mathcal{B}_\infty^\gamma}  \, k^{\frac{1}{2}-\frac{1}{2}\gamma} \label{eqbn-C1} .
\end{align}
For $h \in (0, 1/2)$, define
$k_h = \left\lfloor h^{-\frac{1}{1-\gamma}}\right\rfloor$.
 Then we have
 \begin{align}
 &\sup_{\substack{h \in (0,\frac{1}{2})}} [X^{h,k_h}-B]_{\mathcal{C}^{\frac{1}{2}+H}_{[0,1]} L^{m, \infty}} < \infty \label{equnifscheme} .
 \end{align}

\begin{enumerate}[label=(\alph*)]
\item \underline{The sub-critical case}: Let $\varepsilon \in (0,1/2)$. If $\gamma \in (1-1/(2H) ,0)$ and
\begin{align}
&\|b-b^k\|_{\mathcal{B}_\infty^{\gamma-1}}  \leq C\, \|b\|_{\mathcal{B}_\infty^\gamma} \  k^{-\frac{1}{2}},  \label{eqbn-b} 
\end{align} 
then there exists $C>0$ that depends on $ m, \gamma, \varepsilon,  \|b\|_{\mathcal{B}_\infty^{\gamma}}$ such that the following bound holds:
\begin{align}
\forall h\in ( 0,\tfrac{1}{2} ),\quad   [X - X^{h,k_h}]_{\mathcal{C}^{\frac{1}{2}}_{[0,1]} L^{m}}  \leq C \, h^{\frac{1}{2(1-\gamma)}-\varepsilon}. \label{eqrate1}
\end{align}

\item \underline{The limit case}: If $\gamma=1-1/(2H)$, assume further that there exists $p \in [1,+\infty)$ such that $b \in \mathcal{B}_p^{\gamma+d/p} \hookrightarrow \mathcal{B}^\gamma_{\infty}$. 
Denote $\tilde{\gamma} = \gamma+d/p$. Assume that $\sup_{k \in \mathbb{N}} \|b^k\|_{\B_p^{\tilde\gamma}} \leq \|b\|_{\B_p^{\tilde\gamma}}$ and that
\begin{align}
&\|b-b^k\|_{\mathcal{B}_p^{\tilde\gamma-1}}  \leq C\, \|b\|_{\mathcal{B}_p^{\tilde\gamma}} \  k^{-\frac{1}{2}} \label{eqbn-b-critic} .
\end{align}
Let $\zeta \in (0,1/2)$, $\M$ be the constant given by Proposition \ref{propbound-E1-SDE-critic}, and $\delta \in (0, e^{-\M})$. Then there exists $C>0$ that depends on $m, p, \gamma, \zeta, \delta, \|b\|_{\mathcal{B}_p^{\tilde\gamma}}$ such that the following bound holds:
\begin{align}
\forall h\in ( 0,\frac{1}{2} ),\quad   [X - X^{h,k_{h}}]_{\mathcal{C}^{\frac{1}{2}-\zeta}_{[0,1]} L^{m}}  \leq C \, h^{H (e^{-\M}-\delta)}. \label{eqrate1-critic}
\end{align}

\end{enumerate}
\end{corollary}

\begin{remark}
 For instance, if each component of $b$ is a signed measure, then $b \in \mathcal{B}_1^{0} \hookrightarrow \mathcal{B}_p^{-d+d/p}$ (see \cite[Proposition 2.39]{bahouri2011fourier}). Hence the previous result (Corollary~\ref{corbn-choice}$(a)$) yields a rate $\frac{1}{2(1+d)}-\varepsilon$, which holds for $H < \frac{1}{2(1+d)}$. In the limit case, when $H=\frac{1}{2(1+d)}$, the rate becomes $H e^{-\M}-\varepsilon$.
\end{remark}

The next corollary follows from the convergence of the tamed Euler scheme stated in Corollary~\ref{corbn-choice}: since the scheme is adapted to $\mathbb{F}^B$ and converges to any weak solution $X$ which satisfies \eqref{eq:weakerreg},
we deduce both uniqueness and that the weak solution $X$ is adapted to $\mathbb{F}^B$ (it is therefore a strong solution). 

\begin{corollary}\label{corSP}
Let $\gamma \in \mathbb{R}$ and $b \in \mathcal{B}_\infty^\gamma$. Assume that \eqref{eqcond-gamma-p-H} holds.
\begin{enumerate}[label=(\alph*)]
\item \underline{The sub-critical case}: If $\gamma >1-1/(2H)$, there exists a strong solution $X$ to \eqref{eqSDE} such that $[X-B]_{\mathcal{C}^{1/2+H}_{[0,1]}L^{m,\infty}}<\infty$ for any $m\geq 2$ and pathwise uniqueness holds in the class of strong solutions $Y$ that satisfy ${[Y-B]_{\mathcal{C}^{1/2}_{[0,1]} L^{2}}<\infty}$.
\item \underline{The limit case}: If $\gamma=1-1/(2H)$, assume further that there exists $p \in [1,+\infty)$ such that $b \in \mathcal{B}_p^{\gamma+d/p} \hookrightarrow \mathcal{B}^\gamma_{\infty}$. Then there exists a strong solution $X$ to \eqref{eqSDE} such that $[X-B]_{\mathcal{C}^{1/2+H}_{[0,1]}L^{m,\infty}}<\infty$ for any $m\geq 2$ and pathwise uniqueness holds in the class of strong solutions $Y$ that satisfy ${[Y-B]_{\mathcal{C}^{1/2+H}_{[0,1]} L^{2}}<\infty}$.
\end{enumerate}
\end{corollary}

The proof of Corollary \ref{corSP} is given in Section~\ref{subsecStrongEx}.

\begin{remark}
~
\begin{itemize}
\item It is now possible to construct the tamed Euler scheme on any probability space (rich enough to contain an $\mathbb{F}$-fBm), which is of practical importance for  simulations.
\item The Euler scheme selects the unique solution in the class of solutions $Y$ such that ${[Y-B]_{\mathcal{C}^{1/2}_{[0,1]}L^{2}}<\infty}$ in the sub-critical case, and in the class of solutions $Y$ such that ${[Y-B]_{\mathcal{C}^{1/2+H}_{[0,1]}L^{2}}<\infty}$ in the limit case.
\end{itemize}
\end{remark}

For $\gamma > 0$, $\mathcal{B}_\infty^\gamma$ is continuously embedded in the H\"older space $\mathcal{C}^{\gamma}$. In \cite{butkovsky2021approximation}, it was proved that the Euler scheme achieves a rate $1/2+H\gamma-\varepsilon$. Moreover, if $b$ is a bounded measurable function, the rate is $1/2-\varepsilon$. To close the gap between the present results and \cite{butkovsky2021approximation}, we handle the case $\gamma=0$. Note that $\mathcal{B}_\infty^0$ contains strictly $L^\infty(\R^d)$ (see e.g. \cite[Section 2.2.2, eq (8) and Section 2.2.4, eq (4)]{runst2011sobolev}) which was the space considered in \cite{butkovsky2021approximation}. 
Let $b \in \mathcal{B}_\infty^0$. By the definition of Besov spaces, we know that $b \in \mathcal{B}_\infty^{-\eta}$ for all $\eta>0$. Choosing $\eta$ small enough so that $- \eta > 1-1/(2H)$, we can apply Theorem \ref{thmmain-SDE} and obtain a rate of convergence as in Corollary \ref{corbn-choice}. This is summarized in the following corollary.

\begin{corollary}\label{corgama=d/p}
Let the assumptions of Theorem \ref{thmmain-SDE} hold. Let $B$ be an $\mathbb{F}$-fBm with $H < 1/2$, $b \in \mathcal{B}_\infty^0$ and $m \ge 2$. There exists a strong solution $X$ to \eqref{eqSDE} such that $[X-B]_{\mathcal{C}^{1/2+H}_{[0,1]}L^{m,\infty}}<\infty$. Besides, pathwise uniqueness holds in the class of solutions $X$ such that $[X-B]_{\mathcal{C}^{1/2}_{[0,1]} L^{2}}<\infty$. 

Let $\varepsilon \in (0,1/2)$.
Then there exists a constant $C$ that depends only on $m, \varepsilon,  \|b\|_{\mathcal{B}_\infty^{0}}$ such that for any $h \in (0,1/2)$ and $k\in \N$, the following bound holds:
\begin{align*}%
[X - X^{h,k}]_{\mathcal{C}^{\frac{1}{2}}_{[0,1]} L^{m}} &  \leq C \left( \|b-b^k\|_{\mathcal{B}_{\infty}^{-1}} + \|b^k\|_\infty h^{\frac{1}{2}-\varepsilon} +  \|b^k\|_\infty \|b^k\|_{\mathcal{C}^1} h^{1-\varepsilon} \right).
\end{align*}
Moreover, for $k_h=\lfloor h^{-1} \rfloor$ and $(b^{n})_{k \in \mathbb{N}}$ satisfying \eqref{eqbn-b}, \eqref{eqbn-inf} and \eqref{eqbn-C1}, we have
\begin{align*}
 \forall h\in ( 0,\tfrac{1}{2}), \quad  [X - X^{h,k_h}]_{\mathcal{C}^{\frac{1}{2}}_{[0,1]} L^{m}} & \leq C h^{\frac{1}{2}-\varepsilon},  \\
\sup_{\substack{h \in (0,\frac{1}{2})}} [X^{h,k_h}-B]_{\mathcal{C}^{\frac{1}{2}+H}_{[0,1]} L^{m, \infty}} & < \infty  .
\end{align*}
Furthermore, under \eqref{eqassump-bn-bounded}, we have $\displaystyle \sup_{(h,k)\in \mathcal{D}} [X^{h,k}-B]_{\mathcal{C}^{\frac{1}{2}+H }_{[0,1]} L^{m, \infty}}  < \infty$.
\end{corollary}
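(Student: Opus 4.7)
The plan is to reduce Corollary~\ref{corgama=d/p} to the sub-critical case of Theorem~\ref{thmmain-SDE} and Corollary~\ref{corbn-choice} by exploiting the trivial Besov embedding $\mathcal{B}_\infty^0 \hookrightarrow \mathcal{B}_\infty^{-\eta}$ for every $\eta>0$. Since $H<1/2$, we have $1-1/(2H)<0$, so we may fix $\eta_0\in (0,\, 1/(2H)-1)$, which yields $-\eta_0\in (1-1/(2H),\, 0)$. Viewing $b\in \mathcal{B}_\infty^{-\eta_0}$ with $\|b\|_{\mathcal{B}_\infty^{-\eta_0}}\le \|b\|_{\mathcal{B}_\infty^0}$, the assumption \eqref{eqcond-gamma-p-H} holds with $(\gamma,p)=(-\eta_0,\infty)$, so Theorem~\ref{thmmain-SDE}(b) applies to the solution $X$ given by Theorem~\ref{thWP} and the scheme $X^{h,n}$ defined from the mollified drifts $b^n$ (which satisfy \eqref{eqassump-bn-bounded} by Lemma~\ref{lemreg-S}). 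The resulting bound involves $\|b^n-b\|_{\mathcal{B}_\infty^{-\eta_0-1}}$, which we dominate by $\|b^n-b\|_{\mathcal{B}_\infty^{-1}}$ using the embedding $\mathcal{B}_\infty^{-1}\hookrightarrow \mathcal{B}_\infty^{-\eta_0-1}$. This gives the first displayed inequality of the corollary.

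For the explicit rate with $n_h=\lfloor h^{-1}\rfloor$, I will verify directly that the three terms on the right-hand side all contribute at most $h^{1/2-\varepsilon}$ (up to a constant). Indeed, if the sequence $(b^n)$ satisfies \eqref{eqbn-b}--\eqref{eqbn-C1} with $\gamma-d/p=0$, then $\|b^n\|_\infty$ is uniformly bounded, $\|b^n\|_{\mathcal{C}^1}\lesssim n^{1/2}$, and $\|b^n-b\|_{\mathcal{B}_\infty^{-1}}\lesssim n^{-1/2}$. Substituting $n=n_h\asymp h^{-1}$ gives $\|b^n-b\|_{\mathcal{B}_\infty^{-1}}\lesssim h^{1/2}$, $\|b^n\|_\infty h^{1/2-\varepsilon}\lesssim h^{1/2-\varepsilon}$, and $\|b^n\|_\infty\|b^n\|_{\mathcal{C}^1}h^{1-\varepsilon}\lesssim h^{-1/2}h^{1-\varepsilon}=h^{1/2-\varepsilon}$, all of the claimed order.

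The uniform control $\sup_{h}[X^{h,n_h}-B]_{\mathcal{C}^{1/2+H}_{[0,1]}L^{m,\infty}}<\infty$ follows from part (a) of Theorem~\ref{thmmain-SDE}, once I check \eqref{eqassump-bn-bounded} along the diagonal $(h,n_h)$: for any $\eta'\in (0,H)$, $\|b^{n_h}\|_\infty h^{1/2-H}\lesssim h^{1/2-H}$ and $\|b^{n_h}\|_{\mathcal{C}^1} h^{1/2+H-\eta'}\lesssim h^{-1/2+1/2+H-\eta'}=h^{H-\eta'}$, both bounded on $(0,1/2)$. The same verification, applied on any subdomain $\mathcal{D}$ satisfying \eqref{eqassump-bn-bounded}, yields the last assertion of the corollary.

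Finally, for strong existence and pathwise uniqueness, I will invoke Corollary~\ref{corSP} with $(\gamma,p)=(-\eta_0,\infty)$. Strong existence of $X$ with $[X-B]_{\mathcal{C}^{1/2+H}_{[0,1]}L^{m,\infty}}<\infty$ is immediate. For uniqueness in the class $[X-B]_{\mathcal{C}^{H+\eta}_{[0,1]}L^{2,\infty}}<\infty$ for a prescribed $\eta>0$, I will choose $\eta_0=\eta/(2H)\wedge (1/(2H)-1)/2$ and an auxiliary $\eta_1=\eta/2$, so that Corollary~\ref{corSP} guarantees uniqueness in the class $[X-B]_{\mathcal{C}^{H(1+\eta_0)+\eta_1}_{[0,1]}L^{2,\infty}}<\infty$, and by construction $H(1+\eta_0)+\eta_1 \le H+\eta$, so any process with the desired regularity lies in this class. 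The only mildly delicate point is this last uniqueness argument, where one must check that $-\eta_0$ remains in the Catellier-Gubinelli regime uniformly in the choice dictated by $\eta$; this is immediate since $\eta_0$ can be taken arbitrarily small independently of $\eta$.
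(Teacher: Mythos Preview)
Your proposal is correct and follows essentially the same route as the paper: embed $\mathcal{B}_\infty^0\hookrightarrow\mathcal{B}_\infty^{-\eta_0}$ for a small $\eta_0\in(0,1/(2H)-1)$, apply Theorem~\ref{thmmain-SDE}(b) and Corollary~\ref{corSP} in that sub-critical space, and control $\|b^n-b\|_{\mathcal{B}_\infty^{-\eta_0-1}}$ by $\|b^n-b\|_{\mathcal{B}_\infty^{-1}}$. Your direct substitution of $n_h=\lfloor h^{-1}\rfloor$ into the first bound (using \eqref{eqbn-b}--\eqref{eqbn-C1} with $\gamma-d/p=0$) is in fact slightly cleaner than the paper's argument, which re-optimises over the exponent $\alpha$; one minor slip is the parenthetical invoking \eqref{eqassump-bn-bounded} via Lemma~\ref{lemreg-S} when applying part~(b), since (H3) is not needed there and the $(b^n)$ are arbitrary at that stage.
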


Theorem \ref{thmmain-SDE}, Corollary \ref{corbn-choice} and Corollary \ref{corgama=d/p} are proven in Section \ref{secoverview-SDE}. 

\subsection{Discussion on the results}\label{sec:rate-discussion}
First we discuss the optimality of the rate of convergence obtained from the bound \eqref{eqmain-result-SDE}, and then we comment on this rate.

\paragraph{Discussion on the optimality of the rate of convergence.}

In the sub-critical regime, the bound \eqref{eqmain-result-SDE} consists of two terms: the first term $\|b-b^k\|_{\mathcal{B}_\infty^{\gamma-1}}$ represents a stability error which is due to the taming part of the Euler scheme and the $\mathcal{B}_\infty^{\gamma-1}$ norm is the expected scale to control such error (see also \cite[Theorem 3.2]{GaleatiGerencser}).
As for the second term $\|b^k \|_{\infty} h^{1/2-\varepsilon} + \| b^k\|_{\infty} \| b^k \|_{\mathcal{C}^1} h^{1-\varepsilon}$, it is of the same order as $\|b^k \|_{\infty} h^{1/2-\varepsilon}$ for some simple examples (see Section \ref{subsecCor2.5}). The latter expression represents the optimal error of the Euler scheme for SDEs with bounded drifts so we also expect such a term to arise in our context. Hence we believe that the bound \eqref{eqmain-result-SDE} cannot be improved for generic drifts in $\mathcal{B}_{\infty}^\gamma,\ \gamma \leq 0$.
Given the bound \eqref{eqmain-result-SDE}, given any sequence $(b^{k})_{k\in \N}$ that approximates $b$ and any sequence $(k_{h})_{h\in \N}$ such that $k_{h} \to +\infty$ as $h \to 0$, one cannot improve the rate from \eqref{eqrate1}. This is proven in \cite[Section 2.4]{GHR24} by exhibiting an example of singular function $b$ for which the bound \eqref{eqmain-result-SDE} is always larger or equal to the rate \eqref{eqrate1}. This also means that the choice of $b^k=G_{\frac{1}{k}}b$ and $k_h = h^{\frac{\gamma}{2(1-\gamma)}}$ yields the optimal rate.

\paragraph{Comments on the rate of convergence.}
The main novelty of this paper is that we approximate numerically fractional SDEs with distributional drifts, including the limit case where $\gamma=\tilde\gamma-d/p=1-1/(2H)$, $p<+\infty$ and $b \in \B_p^{\tilde{\gamma}}$. 
The orders obtained here and in \cite{butkovsky2021approximation} can be read in Figure \ref{figorders} and are summarized in Table \ref{tabtrue-summarySDE}. Let us make a few comments.
\begin{itemize}
\item For $\gamma <0$, the largest value of $H$ one can chose is $\frac{1}{2(1-\gamma)}-\varepsilon$. This yields an order of convergence $\frac{1}{2(1-\gamma)}-\varepsilon \approx H-\varepsilon$.
\item For a fixed $H<1/2$, the lowest regularity one can take for $b$ is $b \in \mathcal{B}_\infty^{1-\frac{1}{2H}+\varepsilon}$ for any $\varepsilon>0$, and get an order of convergence that is approximately $H-\varepsilon$.
\item The order of convergence is $1/2-\varepsilon$ when $\gamma = 0$, for any $H < 1/2$.
\item If $H<1/2$ and  $b \in \mathcal{B}_p^{\tilde\gamma}$ for some $p<+\infty$ with $\tilde\gamma-d/p=1-1/(2H)$, the order of convergence is $H e^{-\M}-\varepsilon$ for some constant $\M$ (not displayed on Figure \ref{figorders}). 
\end{itemize}

When $\gamma \ge 0$, the rate of convergence that was obtained in \cite{butkovsky2021approximation} is $(1/2+ H\gamma )\wedge 1 -\varepsilon$. On the boundary $\gamma=0$, this rate coincides with the one we obtained, that is $1/2-\varepsilon$. 
Observe that \eqref{eqcond-gamma-p-H} implies the following inequality
\begin{align}\label{eqtworates}
\frac{1}{2}+ H \gamma  \ge \frac{1}{2(1-\gamma)} .
\end{align}
Therefore, the rate $(1/2+ H\gamma )\wedge 1 -\varepsilon$ from \cite{butkovsky2021approximation} does not directly extend to negative values of $\gamma <0$, although when $\gamma $ is $\varepsilon$-close to $1-1/(2H)$, there is almost equality in \eqref{eqtworates}.

All these observations are confirmed by our numerical simulations (see Section~\ref{subsecsim}). In particular, we observe that for an irregular drift smoothed by a Gaussian kernel, the order of convergence seems indeed close to $1/(2(1-\gamma))$ and that it does not change for all admissible values of $H$.

\definecolor{mydarkgreen}{RGB}{255,0,0}
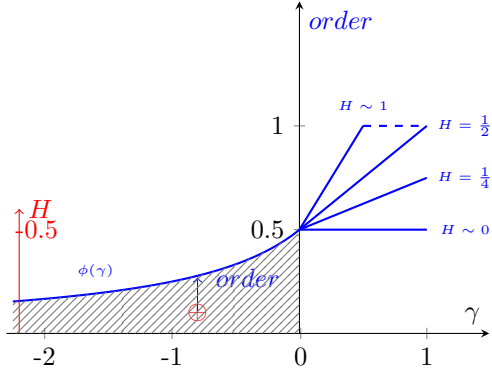
\begin{figure}[h!]
\begin{center}
\begin{tikzpicture}
\begin{axis}[
    axis lines=middle,
    xmin=-2.3,ymin=0,
    xmax=1.49,ymax=1.6,
    xlabel=$\gamma$,
    ylabel=$\color{blue}order$,
%    domain=-0.99:2.99,   % Define the domain for x-values
    samples=100,      % Number of samples for better resolution
    width=8cm,        % Adjust the width of the plot
    height=6cm,        % Adjust the height of the plot
    %axis equal,
    xlabel shift=10pt,
    xtick={-2,-1,0.01,1},  % Set the positions where you want x-axis ticks
    xticklabels={-2, -1, 0, 1},  % Set the corresponding labels  
        ytick={0.5,1},  % Set the positions where you want y-axis ticks
    yticklabels={0.5,1},  % Set the corresponding labels  
]
\addplot[domain=-2.25:0, smooth, blue, thick] {1/(2*(1-x))};
\addplot[domain=0:1, smooth, blue, thick] {1/2};
\addplot[domain=0:1, smooth, blue, thick] {1/2+0.25*x};
\addplot[domain=0:1, smooth, blue, thick] {1/2+0.5*x};
%\addplot[domain=0:2/3, smooth, blue, thick] {1/2+0.75*x};
\addplot[domain=0:0.5, smooth, blue, thick] {1/2+1*x};
\addplot[domain=0.5:1.0, smooth, blue, thick, dashed] {1};
%\node (n9) at (-1.6,0.3) {\color{red}\tiny$\forall H< \frac{1}{2}$};
\node (n9) at (-1.6,0.3) {\color{blue}\tiny$\phi(\gamma)$};
\node (n0) at (1.3,0.50) {\color{blue}\tiny$H\sim 0$};
\node (n1) at (1.3,0.75) {\color{blue}\tiny$H=\frac{1}{4}$};
\node (n2) at (1.3,1.0) {\color{blue}\tiny$H=\frac{1}{2}$};
%\node (n3) at (2/3,1.25) {\color{blue}\tiny$H=\frac{3}{4}$};
\node (n4) at (0.5,1.1) {\color{blue}\tiny$H\sim 1$}; 
\draw[mydarkgreen] (-0.8,0.1) node {$\oplus$};
\draw[->, blue] (-0.8,0.1) -- (-0.8,0.27) node[right] {$\,\, order$};
\draw[->, mydarkgreen] (-2.2,0) -- (-2.2,0.6) node[right] {$H$};
\draw[mydarkgreen] (-2.2,0.5) node {\,\,\,\,\,\,\,\,-0.5};
\addplot[draw=none, postaction={pattern=north east lines, pattern color=gray}, domain=-2.25:0] {1/(2*(1-x))} \closedcycle; 
\end{axis}

\end{tikzpicture}
\end{center}
\caption[Rate of convergence summary for SDEs]{The hashed region represents admissible values of $\gamma$ and $H$ under \eqref{eqcond-gamma-p-H}, when $\gamma <0$. On the blue lines, we read the order of convergence: for a fixed point $(\gamma, H)$ with $\gamma<0$ (see the red cross), one can read the order of convergence given $\phi(\gamma) \colon =\frac{1}{2(1-\gamma)}$ by projecting the point on $\phi$ from below (see the blue arrow). For $\gamma>0$, the order varies with $H$, and therefore several curves are used to represent it.}
\label{figorders}
\end{figure}

\begin{table}[htp!]
\centering
\begin{tabular}{|c|c|c|c|c|}
\hline
\textit{Drift}
& $\begin{array}{ll}
\gamma=\tilde\gamma - \frac{d}{p} = 1-\frac{1}{2H}, \\
p <+\infty~\text{and}~b\in \B_p^{\tilde\gamma}
\end{array}$
& $\gamma \in (1-\frac{1}{2H},0)$
& $\gamma = 0$ 
& $\gamma>0$\\ \hline
\textit{Rate}
& $H e^{-\M}-\varepsilon$
& $\frac{1}{2(1-\gamma)}- \varepsilon $
& $\frac{1}{2}- \varepsilon $ 
& $\Big( \frac{1}{2}+H\gamma \Big)  \wedge 1 -\varepsilon$\\ \hline
\end{tabular}
\caption{Rate of convergence of the tamed Euler scheme depending on the Besov regularity of the drift.}
\label{tabtrue-summarySDE}
\end{table}

In the proof, the difference between our rate of convergence and the rate obtained in \cite{butkovsky2021approximation} comes from the term $E^{2,h,k}$ defined in \eqref{eqdefE}. This term characterises the convergence of the Euler scheme, since it can be written as
\begin{align*}
\int_s^t \left( b^k(X^{h,k}_r) - b^k(X^{h,k}_{r_h}) \right) dr .
\end{align*}
In \cite[Lemma 4.2]{butkovsky2021approximation}, the authors use the $\| b^k \|_{\mathcal{C}^\alpha}$ norm of $b^k$ and Girsanov's theorem to obtain a convergence that depends on $H$. Here, we do not use Girsanov's theorem to avoid an exponential dependence on $\| b^k \|_{\mathcal{C}^\alpha}$, as this term could go to $+\infty$ as $k\to +\infty$ in the case of a genuine distribution. Instead, using sewing techniques we prove the bound stated in Corollary \ref{cornewbound-E2}, which does not depend directly on $H$.

\section{Convergence of the tamed Euler scheme}\label{secoverview-SDE}

In this section, we prove the main results stated in Section~\ref{secnumerical-analysis-SDE}. They rely on several general regularisation properties of the fBm and the discrete-time fBm, which are stated and proven in Section~\ref{secstochastic-sewing}.

\subsection{Proof of Theorem \ref{thmmain-SDE}}\label{secproof-mainth}

Let $\gamma$ satisfying \eqref{eqcond-gamma-p-H} and $b \in \mathcal{B}_\infty^\gamma$. In the limit case, recall that by a Besov embedding, $\mathcal{B}_p^\gamma \hookrightarrow \mathcal{B}_{\bar{p}}^{\gamma-d/p+d/\bar{p}}$ for any $\bar{p} \ge p$. Setting $\bar{\gamma}=\gamma-d/p+d/\bar{p}$, we have $b \in \mathcal{B}_{\bar{p}}^{\bar{\gamma}}$ and $\gamma-d/p = \bar{\gamma} - d/\bar{p}$. So, without any loss of generality, we can always assume that $p$ is as large as we want and in particular we can assume $p \ge m$ to apply Proposition \ref{propregfBm} and Proposition \ref{propbound-E1-SDE-critic}.

\smallskip

Let $\mathcal{D}$ be a sub-domain of $(0,1) \times \mathbb{N}$ satisfying \eqref{eqassump-bn-bounded}. 
Let $(X,B)$ be a weak solution to \eqref{eqSDE} defined on a filtered probability space $(\Omega,\mathcal{F},\mathbb{F},\PP)$ satisfying $[X-B]_{\mathcal{C}^{1/2+H}_{[0,1]} L^{m}}< +\infty$ (which we recall is satisfied by the weak solution given by Theorem \ref{thWP}). On this probability space and with the same fBm $B$, we define the tamed Euler scheme $(X^{h,k})_{h>0,k\in \N}$. 
For all $t>0$, recall from \eqref{solution1} that $K_t \colon= X_t - B_t - X_0$ and define
\begin{align}\label{eqdef-Khn}
K_{t}^k \colon= \int_0^{t} b^k(X_r) \diff r \ \text{ and } \ K^{h,k}_t \colon=  \int_0^t b^k(X^{h,k}_{r_h}) \diff r   .
\end{align}
Now set the notation for the error as
\begin{align*}
\mathcal{E}_t^{h,k} & \colon= K_{t} - K^{h,k}_{t}, \quad  t \ge 0.
\end{align*}
Let $0 \leq S \leq T \leq 1$. The error is decomposed as
\begin{align}\label{eqerror-first-bound-SDE}
[\mathcal{E}^{h,k}]_{\mathcal{C}^{\frac{1}{2}-\zeta}_{[S,T]} L^{m}} & \leq [ K - K^k ]_{\mathcal{C}^{\frac{1}{2}-\zeta}_{[S,T]} L^{m}}  + [ K^{k}-K^{h,k}]_{\mathcal{C}^{\frac{1}{2}-\zeta}_{[S,T]} L^{m}} \nonumber \\
& \leq [ K - K^k ]_{\mathcal{C}^{\frac{1}{2}-\zeta}_{[S,T]} L^{m}}  + [E^{1,h,k}]_{\mathcal{C}^{\frac{1}{2}-\zeta}_{[S,T]} L^{m}}  + [E^{2,h,k}]_{\mathcal{C}^{\frac{1}{2}-\zeta}_{[S,T]} L^{m}}   ,
\end{align}
where $\zeta=0$ in the sub-critical case, and for all $s < t$ we denote
\begin{equation}\label{eqdefE}
\begin{split}
E_{s,t}^{1,h,k} &  \colon= \int_s^t  b^k(X_0 + K_r + B_r) - b^k(X_0 + K^{h,k}_r +  B_r)  \, dr, \\
E_{s,t}^{2,h,k} &  \colon= \int_s^t b^k(X_0 + K^{h,k}_r + B_r) - b^k(X_0 + K^{h,k}_{r_h} + B_{r_h})  \, dr .
\end{split}
\end{equation}
We also denote 
\begin{align}\label{eqdefepsilonhn}
\epsilon(h,k) \colon= [K-K^k]_{\mathcal{C}^{\frac{1}{2}}_{[0,1]}L^m} + [E^{2,h,k}]_{\mathcal{C}^{\frac{1}{2}}_{[0,1]} L^{m}}.
\end{align}

\paragraph{Organisation of the proof.}

To perform the proof, we will need an \emph{a priori} estimate on the norm of the tamed Euler scheme $K^{h,k}$ (for Theorem~\ref{thmmain-SDE}$(a)$), and bounds on the norms of $K-K^k$, $E^{1,h,k}$ and $E^{2,h,k}$. These bounds rely heavily on regularising properties of the fBm and discrete-time fBm which are postponed to Section~\ref{secstochastic-sewing}. For clarity, we precise here the articulation between the various subsections of Section~\ref{secstochastic-sewing} and the current proof:
\begin{itemize}[noitemsep]
\item In Section~\ref{secproofs-SDE}, general regularising properties of the fBm are established. These are used in this proof (see next paragraph) to obtain estimates on $K-K^k$.
\item In Section~\ref{subsecbound-E2},  general regularising properties of the discrete-time fBm are established.
\item In Section~\ref{subsecreg-schema}, the results of Section~\ref{subsecbound-E2} are used to prove an \emph{a priori} estimate on  the norm of the tamed Euler scheme $K^{h,k}$.
\item In Section~\ref{subsecE1hn}, we exploit the results of Section~\ref{secproofs-SDE} and Section~\ref{subsecreg-schema} to obtain estimates on $E^{1,h,k}$, see Corollary~\ref{corbound-E1-SDE} for the sub-critical case, and Proposition~\ref{propbound-E1-SDE-critic} for the limit case.
\item In Section~\ref{subsecfurtherdtimefBm}, the aim is to show Corollary~\ref{cornewbound-E2-general}, which is a quadrature bound between a fractional process and its discrete-time approximation based on regularisation properties of the discrete-time fBm.
\item In Section~\ref{subsecreg-E2hn}, Corollary~\ref{cornewbound-E2-general} is used to obtain the H\"older regularity of $E^{2,h,k}$ in both sub-critical and limit cases, see Corollary~\ref{cornewbound-E2}.
\end{itemize}

\smallskip

The proof of Theorem \ref{thmmain-SDE}$(a)$ follows from a careful application of the general regularisation property of the discrete-time fBm stated in Proposition~\ref{propbound-Khn}. The precise result is stated and proven in Section~\ref{subsecreg-schema}, see Corollary~\ref{corbound-Khn}.

\smallskip

In order to prove Theorem \ref{thmmain-SDE}$(b)$ and $(c)$, we will provide bounds on the quantities that appear in the right-hand side of \eqref{eqerror-first-bound-SDE}. 
Let us start with the bounds on $K-K^k$ in both sub-critical and limit cases.

\paragraph{Bound on $K-K^k$.}

Let $j,k\in \N$. 

First, in the case $\gamma \in( 1-\frac{1}{2H},0)$, we apply Proposition \ref{propregfBm}$(a)$ with $f=b^{j}-b^k$, $\tau=1/2+H$, $\beta=\gamma-1$ and $\psi=X-B$. Using $[X-B]_{\mathcal{C}^{1/2+H}_{[0,1]} L^m} < \infty$, it comes that for any $(s,t) \in \Delta_{S,T}$,
\begin{align*}
\|K_t^{j}-K_s^{j} - K_{t}^k + K_s^{k} \|_{L^m} & \leq  C \| b^{j} - b^k \|_{\mathcal{B}_\infty^{\gamma-1}}   (t-s)^{1+H(\gamma-1)} .
\end{align*}
Hence $(K^{j}_t-K_s^{j})_{j \in \mathbb{N}}$ is a Cauchy sequence in $L^m(\Omega)$ and therefore it converges. We also know by definition of $X$ that $K^{j}_t-K_s^{j}$ converges in probability to $K_t-K_s$. Thus $K^{j}_t-K_s^{j}$ converges in $L^m$ to $K_t-K_s$. Now by the convergence of $b^{j}$ to $b$ in $\mathcal{B}_\infty^{\gamma-1}$, we get
\begin{align*}
\|K_t-K_s - K_{t}^k + K_s^{k} \|_{L^m} & \leq  C \| b- b^k \|_{\mathcal{B}_\infty^{\gamma-1}} \, (t-s)^{1+H(\gamma-1)} .
\end{align*}
Dividing by $(t-s)^{\frac{1}{2}}$ and taking the supremum over $(s,t)$ in $\Delta_{S,T}$ (recall that $\frac{1}{2} + H(\gamma-1) \ge 0$), we get that
\begin{align}\label{eqproba-conv-SDE}
[K-K^{k}]_{\mathcal{C}^{\frac{1}{2}}_{[S,T]} L^{m}} & \leq  C \|b-b^k\|_{\mathcal{B}_\infty^{\gamma-1}}.
\end{align}

\smallskip

Now in the limit case, \emph{i.e.} with $b \in \mathcal{B}_p^{\tilde\gamma}$, $\tilde\gamma-d/p=1-1/(2H)$ and $p \in [m,+\infty)$, we will apply Proposition \ref{propregfBm}$(b)$ with $f=b^{j}-b^k$, $\beta=\tilde\gamma-1$ and $\psi=X-B$. Since $(b^j)$ converges to $b$ in $\mathcal{B}^{\tilde\gamma-}_{p}$, there is $ \|b^j-b^k\|_{\mathcal{B}^{\tilde\gamma}_{p} }\leq 2 \|b\|_{\mathcal{B}^{\tilde\gamma}_{p}} \vee 1$, and therefore
\begin{align*}
\left| \log \frac{\| b^j - b^k \|_{\mathcal{B}_p^{\tilde\gamma-1}}}{\| b^j - b^k \|_{\mathcal{B}_p^{\tilde\gamma-}}} \right| & \leq  \log( 2 \| b \|_{\mathcal{B}_p^{\tilde\gamma}} \vee 1)  + |  \log(\| b^j - b^k \|_{\mathcal{B}_p^{\tilde\gamma-1}}) | \\
& \leq C (1 +  |  \log(\| b^j - b^k \|_{\mathcal{B}_p^{\tilde\gamma-1}}) |  ) .
\end{align*} 
Besides, $[X-B]_{\mathcal{C}^{1/2+H}_{[0,1]} L^m} <\infty$, hence for any $(s,t) \in \Delta_{S,T}$, Proposition \ref{propregfBm}$(b)$ reads
\begin{align*}
\|K_t^{j}-K_s^{j} - K_{t}^k + K_s^{k} \|_{L^m} & \leq  C \| b^{j} - b^k \|_{\mathcal{B}_p^{\tilde\gamma-1}}  (1+|\log \| b^j - b^k \|_{\mathcal{B}_p^{\tilde\gamma-1}} | )  (t-s)^{\frac{1}{2}} .
\end{align*}
As in the sub-critical case, we deduce that 
\begin{align}\label{eqproba-conv-SDE-critic}
[K-K^{k}]_{\mathcal{C}^{\frac{1}{2}}_{[S,T]} L^{m}} & \leq  C \|b-b^k\|_{\mathcal{B}_p^{\tilde\gamma-1}} (1+ |\log \|b-b^k\|_{\mathcal{B}_p^{\tilde\gamma-1}} |) .
\end{align}

\paragraph{Bound on $E^{1,h,k}$.} 

Recall that $\mathcal{E}^{h,k}= K-K^{h,k}$ and that $X^{h,k}$ satisfies $\sup_{(h,k) \in \mathcal{D}} [X^{h,k}-B]_{\mathcal{C}^{1/2+H}_{[0,1]} L^{m, \infty}}<+\infty$ by Theorem \ref{thmmain-SDE}$(a)$ since \eqref{eqcond-gamma-p-H} and \eqref{eqassump-bn-bounded} hold.

In the sub-critical case, we have from Corollary \ref{corbound-E1-SDE} that there exists $C>0$ such that for any $(s,t)\in \Delta_{S,T}$, any $k \in \mathbb{N}$ and $h \in (0,1)$,
\begin{align*}
\|E^{1,h,k}_{s,t} \|_{L^{m}} &  \leq C \Big( [\mathcal{E}^{h,k}]_{\mathcal{C}^{\frac{1}{2}}_{[S,T]} L^{m}} + \|\mathcal{E}^{h,k}_{S} \|_{L^m} \Big) (t-s)^{1+ H (\gamma-1)}.
\end{align*}
Divide by $(t-s)^{1/2}$ and take the supremum over $(s,t)\in \Delta_{S,T}$ to get
\begin{align}\label{eqbound-E1-SDE}
[ E^{1,h,k} ]_{\mathcal{C}^{\frac{1}{2}}_{[S,T]} L^m} \leq C \Big( [\mathcal{E}^{h,k}]_{\mathcal{C}^{\frac{1}{2}}_{[S,T]} L^{m}} + \|\mathcal{E}^{h,k}_{S} \|_{L^m} \Big) (T-S)^{\frac{1}{2}+ H (\gamma-1)} .
\end{align}

In the limit case, since $\sup_{(h,k) \in \mathcal{D}} [X^{h,k}-B]_{\mathcal{C}^{1/2+H}_{[0,1]} L^{m, \infty}}<\infty$ and $[X-B]_{\mathcal{C}^{1/2+H}_{[0,1]} L^{m}}<\infty$, we can apply Proposition~\ref{propbound-E1-SDE-critic} to obtain the existence of $\ell_{0}>0$ and $\mathbf{M}>0$ such that if $T-S\leq \ell_{0}$, then for any $(s,t)\in \Delta_{S,T}$,
\begin{align*}
 \| E^{1,h,k}_{s,t}\|_{L^{m}} &\leq \M \, \bigg(1+ \Big|\log \frac{T^H}{ \| \mathcal{E}^{h,k} \|_{L^\infty_{[S,T]} L^{m}} +\epsilon(h,k)}\Big| \bigg) \, \Big( \|\mathcal{E}^{h,k} \|_{L^\infty_{[S,T]} L^{m}} + \epsilon(h,k)\Big) \, (t-s) \nonumber\\
 &\quad+ \M \, \Big(\|\mathcal{E}^{h,k} \|_{L^\infty_{[S,T]} L^{m}} + [\mathcal{E}^{h,k} ]_{\mathcal{C}^{\frac{1}{2}-\zeta}_{[S,T]} L^{m}} \Big)\,   (t-s)^{\frac{1}{2}} .
\end{align*}
It follows that
\begin{align*}
 \| E^{1,h,k}_{s,t}\|_{L^{m}} &\leq  \M \, \Big(1+ \big|\log \big(\|\mathcal{E}^{h,k} \|_{L^\infty_{[S,T]} L^{m}} +\epsilon(h,k)\big)\big| \Big) \, \Big( \|\mathcal{E}^{h,k} \|_{L^\infty_{[S,T]} L^{m}} + \epsilon(h,k)\Big) \, (t-s) \\
 &\quad+ C \, \Big( (1+|\log T| (t-s)^{\frac{1}{2}}) \big( \|\mathcal{E}^{h,k} \|_{L^\infty_{[S,T]} L^{m}}  + \epsilon(h,k)\big)+ [\mathcal{E}^{h,k} ]_{\mathcal{C}^{\frac{1}{2}-\zeta}_{[S,T]} L^{m}} \Big)\, (t-s)^{\frac{1}{2}}.
\end{align*}
Now use that $1\geq T\geq t-s$ to deduce that $|\log T| (t-s)^{\frac{1}{2}}$ is bounded on the set $\{(s,t,T)\colon T\in (0,1] \text{ and } s<t\leq T \}$.  
Since $\|\mathcal{E}^{h,k} \|_{L^\infty_{[S,T]} L^{m}} \leq \| \mathcal{E}_S^{h,k} \|_{L^m} + [\mathcal{E}^{h,k}]_{\mathcal{C}^{\frac{1}{2}-\zeta}_{[S,T]} L^{m}}$, we get using $\M (t-s) \leq C (t-s)^{\frac{1}{2}}$,
\begin{align*}
 \| E^{1,h,k}_{s,t}\|_{L^{m}} &\leq  \M \, \big|\log \big(\|\mathcal{E}^{h,k} \|_{L^\infty_{[S,T]} L^{m}} +\epsilon(h,k)\big) \big| \, \Big( \|\mathcal{E}^{h,k} \|_{L^\infty_{[S,T]} L^{m}} + \epsilon(h,k)\Big) \, (t-s) \\
 &\quad+ C \, \Big(\|\mathcal{E}^{h,k}_{S} \|_{L^{m}} + [\mathcal{E}^{h,k} ]_{\mathcal{C}^{\frac{1}{2}-\zeta}_{[S,T]} L^{m}} + \epsilon(h,k) \Big)\, (t-s)^{\frac{1}{2}}.
\end{align*}
Divide by $(t-s)^{1/2-\zeta}$ and take the supremum over $(s,t)\in \Delta_{S,T}$ to get
\begin{equation}\label{eqbound-E1-SDE-critic}
\begin{split}
 [ E^{1,h,k} ]_{\mathcal{C}^{\frac{1}{2}-\zeta}_{[S,T]} L^{m}} &\leq  \M \, \Big( \big|\log \big(\|\mathcal{E}^{h,k} \|_{L^\infty_{[S,T]} L^{m}} +\epsilon(h,k)\big)\big| \Big) \, \Big( \|\mathcal{E}^{h,k} \|_{L^\infty_{[S,T]} L^{m}} + \epsilon(h,k)\Big) \, (T-S)^{\frac{1}{2}+\zeta} \\
 &\quad+ C \, \Big(\|\mathcal{E}^{h,k}_{S} \|_{L^{m}} + [\mathcal{E}^{h,k} ]_{\mathcal{C}^{\frac{1}{2}-\zeta}_{[S,T]} L^{m}}  + \epsilon(h,k)\Big)\, (T-S)^{\zeta} .
\end{split}
\end{equation}

\paragraph{Bound on $E^{2,h,k}$.} By Corollary \ref{cornewbound-E2}, we have the following bound for $\varepsilon \in (0,\frac{1}{2})$, and $(s,t) \in \Delta_{S,T}$
\begin{align*}
\| E^{2,h,k}_{s,t} \|_{L^m} \leq C \left( \|b^k\|_\infty h^{\frac{1}{2}-\varepsilon} +  \|b^k\|_{\mathcal{C}^1} \|b^k\|_\infty  h^{1-\varepsilon} \right)   (t-s)^{\frac{1}{2}} . 
\end{align*}
Dividing by $(t-s)^{\frac{1}{2}}$ and taking the supremum over $(s,t)$ in $\Delta_{S,T}$, we get
\begin{align}\label{eqbound-E2-SDE}
[ E^{2,h,k}  ]_{\mathcal{C}^{\frac{1}{2}}_{[S,T]} L^{m}} & \leq  C \left( \|b^k\|_\infty h^{\frac{1}{2}-\varepsilon} +  \|b^k\|_{\mathcal{C}^1} \|b^k\|_\infty  h^{1-\varepsilon} \right)     .
\end{align}
This is where we avoid using Girsanov's theorem and rely instead on a bound that involves the $\mathcal{C}^1$ norm of $b^k$. This simply comes from estimates when $t-s\leq h$ of the form $|\int_{s}^t f(\psi_{r}+B_{r}) - f(\psi_{r}+B_{r_{h}}) \, dr| \lesssim \|f\|_{\mathcal{C}^1}\, (t-s)\, h^{H-}$, at a scale where the discretised noise cannot regularise anymore.
More rigorously, the previous bound is again obtained by a stochastic sewing argument.

\paragraph{Conclusion in the sub-critical case.} 
Using \eqref{eqbound-E1-SDE} in \eqref{eqerror-first-bound-SDE}, and recalling the definition of $\epsilon(h,k)$ in \eqref{eqdefepsilonhn}, we get 
\begin{align*}
[\mathcal{E}^{h,k}]_{\mathcal{C}^{\frac{1}{2}}_{[S,T]} L^{m}} &\leq \epsilon(h,k) + C  \Big( [\mathcal{E}^{h,k}]_{\mathcal{C}^{\frac{1}{2}}_{[S,T]} L^{m}} + \| \mathcal{E}^{h,k}_{S} \|_{L^m} \Big) (T-S)^{\frac{1}{2}+ H (\gamma-1)}.
\end{align*}
Hence for $T-S\leq (2C)^{-1/(1/2+H(\gamma-1))} =\colon \ell_{0}$, we get
\begin{align}\label{eqboundseminorm}
[\mathcal{E}^{h,k}]_{\mathcal{C}^{\frac{1}{2}}_{[S,T]} L^{m}} \leq 2\epsilon(h,k) + \| \mathcal{E}^{h,k}_{S} \|_{L^m} .
\end{align}
Then the inequality 
\begin{align*}
\|\mathcal{E}_{S}^{h,k}\|_{L^m} & \leq \|\mathcal{E}_{S-\ell_0}^{h,k}\|_{L^m}  + \|\mathcal{E}_{S}^{h,k}-\mathcal{E}_{S-\ell_0}^{h,k} \|_{L^m} \\
 & \leq \|\mathcal{E}_{S-\ell_{0}}^{h,k}\|_{L^m} + \ell_0^{\frac{1}{2}} [\mathcal{E}^{h,k}]_{\mathcal{C}^{\frac{1}{2}}_{[S-\ell_{0},S]} L^{m}}
\end{align*}
can be plugged in \eqref{eqboundseminorm} and iterated until $S-n \ell_{0}$ is smaller than $0$ for $n \in \N$ large enough. It follows that 
\begin{align*}
[\mathcal{E}^{h,k}]_{\mathcal{C}^{\frac{1}{2}}_{[0,1]} L^{m}} \leq C\epsilon(h,k) ,
\end{align*}
and in view of \eqref{eqdefepsilonhn}, \eqref{eqproba-conv-SDE} and \eqref{eqbound-E2-SDE}, we obtain the result \eqref{eqmain-result-SDE} of Theorem~\ref{thmmain-SDE}$(b)$.

\paragraph{Conclusion in the limit case.}
Using \eqref{eqbound-E1-SDE-critic} in \eqref{eqerror-first-bound-SDE}, we get that if $T-S\leq \ell_{0}$, 
\begin{align*}
[\mathcal{E}^{h,k}]_{\mathcal{C}^{\frac{1}{2}-\zeta}_{[S,T]} L^{m}} 
&\leq [K-K^{k}]_{\mathcal{C}^{\frac{1}{2}-\zeta}_{[S,T]} L^{m}} + [E^{2,h,k}]_{\mathcal{C}^{\frac{1}{2}-\zeta}_{[S,T]} L^{m}} \\
&\quad + \M \, \Big( \big|\log \big(\|\mathcal{E}^{h,k} \|_{L^\infty_{[S,T]} L^{m}} +\epsilon(h,k)\big)\big| \Big) \, \Big( \|\mathcal{E}^{h,k} \|_{L^\infty_{[S,T]} L^{m}} + \epsilon(h,k)\Big) \, (T-S)^{\frac{1}{2}+\zeta} \\
 &\quad+ C \, \Big(\|\mathcal{E}^{h,k}_{S} \|_{L^{m}} + [\mathcal{E}^{h,k} ]_{\mathcal{C}^{\frac{1}{2}-\zeta}_{[S,T]} L^{m}} + \epsilon(h,k) \Big) \, (T-S)^\zeta .
\end{align*}
We observe that $[K-K^{k}]_{\mathcal{C}^{\frac{1}{2}-\zeta}_{[S,T]} L^{m}} + [E^{2,h,k}]_{\mathcal{C}^{\frac{1}{2}-\zeta}_{[S,T]} L^{m}} \leq (T-S)^{\zeta} \, \epsilon(h,k)$. Let $\ell>0$ satisfying
\begin{align}\label{eqboundEll}
\ell < ( C )^{-\frac{1}{\zeta}} \wedge 1 \wedge \ell_{0} .
\end{align}
Passing the term $[\mathcal{E}^{h,k}]_{\mathcal{C}^{\frac{1}{2}-\zeta}_{[S,T]} L^{m}}$ from the r.h.s to the l.h.s, we get for any $S<T$ such that $T-S\leq \ell$
\begin{align}\label{eq1/2-zeta-bound}
[\mathcal{E}^{h,k}]_{\mathcal{C}^{\frac{1}{2}-\zeta}_{[S,T]} L^{m}} 
&\leq \frac{1+C}{1-C \ell^\zeta} (\epsilon(h,k) +  \, \|\mathcal{E}^{h,k}_{S} \|_{L^{m}})\,  (T-S)^{\zeta} \nonumber \\
&\quad +  \frac{\M}{1-C \ell^\zeta}  \,  \big|\log \big(\|\mathcal{E}^{h,k} \|_{L^\infty_{[S,T]} L^{m}} +\epsilon(h,k)\big)\big|  \, \Big( \|\mathcal{E}^{h,k} \|_{L^\infty_{[S,T]} L^{m}} + \epsilon(h,k)\Big) \, (T-S)^{\frac{1}{2}+\zeta} .
\end{align}
Hence denoting $C_1 = \frac{1+C}{1-C \ell^\zeta}$ and $\C_2 =\frac{\M}{1-C \ell^\zeta} $, we have for $T-S \leq \ell$,
 \begin{equation}\label{eqboundIncE}
\begin{split}
\| \mathcal{E}^{h,k}_{T} - \mathcal{E}^{h,k}_{S} \|_{L^m} &\leq C_1 \, \left(\epsilon(h,k) +\|\mathcal{E}^{h,k}\|_{L^\infty_{[S,T]}L^m} \right) \, (T-S)^{\frac{1}{2}}  \\
&\quad + \C_2  \, \Big( \|\mathcal{E}^{h,k}\|_{L^\infty_{[S,T]}L^m}+\epsilon(h,k) \Big) \, \big| \log \big( \|\mathcal{E}^{h,k}\|_{L^\infty_{[S,T]}L^m} + \epsilon(h,k) \big)\big| \, (T-S). 
\end{split}
\end{equation}
At this point, it seems that we could conclude with a Gr\"onwall-type argument. However, consider first the simpler Gr\"onwall-type inequality $\| f_t - f_s \| \leq C \| f \|_{L^\infty
_{[s,t]}}^{\alpha} (t-s)$,
where $f_0=0$. One can check that $f=0$ when $\alpha \ge 1$, but that there exists non-zero functions that satisfy the inequality when $\alpha <1$. The following lemma describes the critical case $\alpha=1$ with a logarithmic factor and a perturbation $\eta$ of the function $f$, which corresponds to the situation in \eqref{eqboundIncE}. It quantifies, in terms of the perturbation $\eta$, an argument that appears in the proofs of \cite[Prop. 3.6]{athreya2020well} and \cite[Prop. 6.1]{anzeletti2021regularisation}. 
\begin{lemma}\label{lemrate-critical}
Let $(E, \|\cdot\|)$ be a normed vector space. For $\ell >0, \ C_{1}, C_{2} \ge 0$ and $\eta\in(0,1)$ we consider the set $\mathcal{R}(\eta,\ell, C_{1}, C_{2})$ of functions  defined from $[0,1]$ to $E$ characterised as follows:  $f\in \mathcal{R}(\eta,\ell, C_{1}, C_{2})$  if $f$ is bounded, $f_{0}=0$ and for any $s\leq t\in [0,1]$ such that $t-s \leq \ell$,
\begin{equation}\label{eqboundIncf}
\begin{split}
\|f_{t} - f_{s}\| &\leq C_1 \, (\|f\|_{L^\infty_{[s,t]}E} + \eta) \, (t-s)^{\frac{1}{2}}  \\
&\quad + C_2 \, ( \|f\|_{L^\infty_{[s,t]}E}+\eta ) \, \big| \log \big( \|f\|_{L^\infty_{[s,t]}E} + \eta \big)\big| \, (t-s). 
\end{split}
\end{equation}
Then for any $\delta\in(0, e^{-C_{2}})$, there exists $\bar{\eta} \equiv \bar{\eta}(C_{1},C_{2},\ell,\delta)$ such that for any $\eta<\bar{\eta}$ and any $f\in \mathcal{R}(\eta,\ell, C_{1}, C_{2})$,
\begin{equation*}
\| f \|_{L^\infty_{[0,1]} E} \leq \eta^{e^{-C_{2}}-\delta} .
\end{equation*}
\end{lemma}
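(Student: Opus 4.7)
The heuristic is that \eqref{eqboundIncf} is a discrete counterpart of the critical Gr\"onwall ODE $y' = C_2 y |\log y|$, whose solution starting from $\eta$ reaches $\eta^{e^{-C_2}}$ at time $1$. The plan is to make this rigorous through an iterative lower bound on $L_k := -\log M(t_k)$, where $M(t) := \|f\|_{L^\infty_{[0,t]}E} + \eta$.

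First I would recast \eqref{eqboundIncf} as a functional inequality on the non-decreasing quantity $M$: since $M(0) = \eta$ and, whenever $M(t) > M(s)$, the supremum defining $M(t)$ is attained at some $u^\ast \in (s,t]$ so that $M(t) - M(s) \leq \|f_{u^\ast} - f_s\|$, a direct substitution of \eqref{eqboundIncf} combined with the monotonicity of $x \mapsto x|\log x|$ on $(0, 1/e)$ should produce, as long as $M(t) < 1/e$ and $t - s \leq \ell$,
\[M(t) - M(s) \leq C_1 M(t)(t-s)^{1/2} + C_2 M(t) |\log M(t)|(t-s).\]

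Next I would discretize $[0,1]$ with a mesh $h = h(\eta,\delta) \in (0, \ell]$ chosen small enough that $C_2 h |\log \eta|$ stays bounded (for instance $h \propto |\log \eta|^{-1}$), set $N = \lceil 1/h \rceil$, $t_k = kh$ for $k < N$, $t_N = 1$, and $L_k = -\log M(t_k)$. Applying the functional inequality between consecutive nodes and linearising via $\log(1-y) \geq -y/(1-y)$ should yield a bound of the form
\[L_k \geq \frac{L_{k-1} - (1+\epsilon(h)) C_1 h^{1/2}}{1 + (1+\epsilon(h)) C_2 h}, \qquad \epsilon(h) \to 0 \text{ as } h \to 0,\]
valid under the bootstrap assumption $M(t_k) < 1/e$. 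Iterating from $k=0$ to $k=N$ and summing the geometric tail of the affine terms should then produce
\[L_N \geq (1 + (1+\epsilon(h)) C_2 h)^{-N} L_0 - \frac{C_1}{C_2 h^{1/2}}.\]

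Since $Nh \to 1$, the prefactor converges to $e^{-C_2}$ up to a factor vanishing with $\epsilon(h)$, while $C_1/(C_2 h^{1/2}) = o(|\log \eta|) = o(L_0)$ by the choice of $h$. Hence for $\delta \in (0, e^{-C_2})$ and $\eta$ smaller than some $\bar\eta(C_1, C_2, \ell, \delta)$, $L_N \geq (e^{-C_2} - \delta) L_0$, that is $M(1) \leq \eta^{e^{-C_2} - \delta}$; the conclusion will follow from $\|f\|_{L^\infty_{[0,1]}E} \leq M(1)$. The bootstrap $M < 1/e$ should be closed via $T^\ast := \inf\{t \in [0,1] : M(t) \geq 1/e\}$: running the argument on $[0, T^\ast]$ would give $M(T^\ast) \leq \eta^{e^{-C_2}-\delta} < 1/e$ for $\eta$ small enough, contradicting the definition of $T^\ast$. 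The principal obstacle will be balancing the two constraints on $h$: summing the $C_1 h^{1/2}$ contributions produces the $h^{-1/2}$ blow-up, whereas linearising $\log(1 - C_1 h^{1/2} - C_2 h L_k)$ demands that $C_2 h |\log \eta|$ stay small. These two requirements can only be reconciled for $\eta$ small, thanks to the divergence of $|\log \eta|$, which is precisely why the conclusion is restricted to $\eta < \bar\eta$.
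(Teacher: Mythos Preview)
Your approach is correct and genuinely different from the paper's. The paper argues via \emph{level-crossing times}: it defines $t_{k+1}$ as the first time $\eta+\|f_t\|$ reaches the geometric level $a^{k+1}\eta$, inverts \eqref{eqboundIncf} to get a \emph{lower} bound on each increment $t_{k+1}-t_k$ (by solving a quadratic in $(t_{k+1}-t_k)^{1/2}$), and then shows via a Riemann-sum comparison with $\int \frac{dx}{|\log(a^{x+1}\eta)|}$ that after $N\approx \alpha|\log\eta|/\log a$ levels the accumulated time already exceeds~$1$, forcing $\|f\|_{L^\infty_{[0,1]}E}\leq a^{N+1}\eta$. Your route --- a \emph{fixed} mesh together with the log-substitution $L_k=-\log M(t_k)$ and the affine recursion $L_k\geq(L_{k-1}-\beta)/\rho$ --- is more elementary: it avoids the quadratic inversion and the somewhat delicate integral comparison, and the exponent $e^{-C_2}$ drops out transparently from $(1+C_2h)^{-1/h}\to e^{-C_2}$. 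The paper's level-set construction, on the other hand, makes the connection with the critical ODE $y'=C_2y|\log y|$ more visibly \emph{optimal} (the remark preceding the proof) and handles the possibility of large gaps $t_{k+1}-t_k>\ell$ explicitly; in your scheme this never arises since $h\leq\ell$ by construction.

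One point to tighten: with the specific choice $h\propto|\log\eta|^{-1}$ the quantity $y_k=C_1h^{1/2}+C_2L_kh$ is only \emph{bounded} (by $C_2c+o(1)$), not tending to zero, so the claim ``$\epsilon(h)\to0$ as $h\to0$'' is not literally true at that scaling. The fix is cosmetic: either (i) first choose the proportionality constant $c$ small depending on $\delta$, so that $(1+\epsilon)C_2$ is close enough to $C_2$, and \emph{then} send $\eta\to0$ to absorb the $C_1/(C_2\sqrt{h})=O(\sqrt{|\log\eta|})=o(L_0)$ term; or (ii) take $h=|\log\eta|^{-\alpha}$ for any $\alpha\in(1,2)$, which makes both $C_2hL_0\to0$ and $C_1/(C_2\sqrt{h})=o(L_0)$ simultaneously. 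Either way your bootstrap via $T^\ast$ closes exactly as you describe, and the proof goes through. (Both you and the paper should also note the trivial reduction to $C_1,C_2>0$.)
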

The proof is postponed to the Appendix~\ref{app:sec:gronwall1}.
\begin{remark}
The real-valued function $g_{t}= \eta^{e^{-C t}} - \eta$ solves the differential equation $g'_{t} = -C\, (g_{t} + \eta)\, \log(g_{t}+\eta), \ \forall t \in [0,1]$.
Thus the result of Lemma~\ref{lemrate-critical} seems to give an optimal bound up to the correction $\delta$.
\end{remark}

Let $\delta \in (0, \frac{e^{-\M}}{4})$. Recalling that $\C_2 = \frac{\M}{1-C \ell^\zeta}$, let us also choose $\ell$ which still satisfies \eqref{eqboundEll} and which is small enough in order to have $e^{-\C_2} \geq e^{-\M}-\delta \ge \delta$. We now apply Lemma~\ref{lemrate-critical} with $E= L^m$, $\ell=\ell$, $C_1=C_1$, $C_2 = \C_2$. The term with the factor $C_1$ is only a small perturbation, that is why we do not keep track of the second constant in the bound \eqref{eqbound-E1-SDE-critic} of $E^{1,h,k}$. By \eqref{eqboundIncE}, we have that $\mathcal{E}^{h,k}$ belongs to $\mathcal{R}(\epsilon(h,k), \ell, C_1, \C_2)$ for $(h,k)\in \mathcal{D}$. Therefore, there exists $\bar{\epsilon} \equiv \bar{\epsilon} (C_1, \C_2, \ell, \delta)$ such that if $\epsilon(h,k) < \bar{\epsilon}$, we have

\begin{align*}
\| \mathcal{E}^{h,k} \|_{L^\infty_{[0,1]} L^m} \leq  \epsilon(h,k)^{e^{-\C_2}-\delta} \leq \epsilon(h,k)^{e^{-\M}-2\delta}  .
\end{align*}
Let $\epsilon \equiv \epsilon(C_1,\M,\ell, \delta, \zeta) < \bar{\epsilon} \wedge 1 $ such that $\epsilon^{e^{-\M}-2\delta} < \frac{e^{-1}}{2}$. Then, if $\epsilon(h,k) < \epsilon$, we have 
$$\| \mathcal{E}^{h,k} \|_{L^\infty_{[0,1]}L^m}  + \epsilon(h,k) \leq \epsilon(h,k)^{e^{-\M}-2\delta} + \epsilon(h,k)  < 2 \epsilon(h,k)^{e^{-\M}-2 \delta} < e^{-1}.$$
Since $x \mapsto x | \log(x) |$ is increasing over $(0, e^{-1})$, in view of \eqref{eq1/2-zeta-bound}, we have that over any interval $I$ of size $\ell$,
\begin{align*}
[ \mathcal{E}^{h,k} ]_{ \mathcal{C}^{\frac{1}{2}-\zeta}_{I} L^m} \leq C\, \epsilon(h,k)^{(e^{-\M}-2\delta)}\, (1+|\log(\epsilon(h,k))| ) .
\end{align*} 
Since $\ell$ is fixed independently of $\epsilon(h,k)$, summing at most $\frac{1}{\ell}$ of these bounds, we get that if $\epsilon(h,k) < \epsilon$
\begin{align*}
[ \mathcal{E}^{h,k} ]_{ \mathcal{C}^{\frac{1}{2}-\zeta}_{[0,1]} L^m} \leq C\, \epsilon(h,k)^{(e^{-\M}-2\delta)}\,(1+|\log(\epsilon(h,k))| ) \leq C\, \epsilon(h,k)^{(e^{-\M}-4\delta)} .
\end{align*}
From Corollary \ref{corbound-Khn} and the property $[K]_{\mathcal{C}^{1/2+H}_{[0,1]} L^{m,\infty}} < \infty$, we have that under \eqref{eqassump-bn-bounded}, $$\sup_{h,k\in \mathcal{D}} [ \mathcal{E}^{h,k} ]_{ \mathcal{C}^{\frac{1}{2}-\zeta}_{[0,1]} L^m} \leq  [K]_{\mathcal{C}^{1/2+H}_{[0,1]} L^{m,\infty}} + \sup_{h,k\in \mathcal{D}}  [K^{h,k}]_{\mathcal{C}^{1/2+H}_{[0,1]} L^{m,\infty}} <\infty.$$ It follows that there exists a constant $C$ such that for all $(h,k)\in \mathcal{D}$,
\begin{align*}
[ \mathcal{E}^{h,k} ]_{ \mathcal{C}^{\frac{1}{2}-\zeta}_{[0,1]} L^m} \leq C\, \epsilon(h,k)^{(e^{-\M}-4\delta)} .
\end{align*}

In view of \eqref{eqdefepsilonhn}, we obtain \eqref{eqmain-result-SDE-critic} from~Theorem \ref{thmmain-SDE}$(c)$.

\subsection{Proof of Corollary \ref{corbn-choice}}\label{subsecCor2.5}

We will do the computations with $k_h = \lfloor h^{-\alpha} \rfloor $ for some $\alpha>0$ and prove that the upper bound on $ [\mathcal{E}^{h,k_h}]_{\mathcal{C}^{1/2}_{[0,1]} L^m}$ given by Theorem \ref{thmmain-SDE} is minimized for $\alpha = 1/(1-\gamma)$ in the sub-critical regime and $\alpha=1/(1-\gamma)$.

First, the inequalities \eqref{eqbn-inf} and \eqref{eqbn-C1} imply that $b^{k_h}$ satisfy \eqref{eqassump-bn-bounded} with $\mathcal{D}=\{ (h,k_h) , h \in (0,1/2) \}$, any $\eta \in (0,H)$ and $\alpha \leq (2(H-\eta)+1)/(1-\gamma)$. Therefore, we deduce \eqref{equnifscheme} from Theorem \ref{thmmain-SDE}$(a)$. 

\paragraph{The sub-critical case.}
Since $b^{k_h}$ satisfies \eqref{eqbn-b}, \eqref{eqbn-inf}, \eqref{eqbn-C1}, the result of Theorem \ref{thmmain-SDE}$(b)$ reads
\begin{align*}
 [\mathcal{E}^{h,k_h}]_{\mathcal{C}^{\frac{1}{2}}_{[0,1]} L^m} & \leq C \Big( \lfloor h^{-\alpha} \rfloor ^{-\frac{1}{2}}  +  \lfloor h^{-\alpha} \rfloor ^{-\frac{1}{2} \gamma}  h^{\frac{1}{2}-\varepsilon} + \lfloor h^{-\alpha} \rfloor ^{\frac{1}{2}- \gamma} h^{1-\varepsilon}  \Big) .
\end{align*}
Since $-\frac{1}{2}\gamma>0$ and $\frac{1}{2}-\gamma>0$, we have
\begin{align*}
 \lfloor h^{-\alpha} \rfloor ^{\frac{1}{2}-\gamma} \leq h^{-\frac{\alpha}{2}} h^{\alpha\gamma} \text{ and } \lfloor h^{-\alpha} \rfloor ^{-\frac{1}{2}\gamma}  \leq  h^{\frac{\alpha}{2}\gamma} .
\end{align*}
Moreover, since $ h \in (0, \frac{1}{2})$ and $ \lfloor h^{-\alpha} \rfloor  > h^{-\alpha} -1$, we have
\begin{align*}
\lfloor h^{-\alpha} \rfloor^{-\frac{1}{2}} & \leq (1-h^{\alpha})^{-\frac{1}{2}} h^{\frac{\alpha}{2}} 
 \leq \left( 1-\frac{1}{2^\alpha} \right)^{-\frac{1}{2}} h^{\frac{\alpha}{2}} \leq C h^{\frac{\alpha}{2}} .
\end{align*}
It follows that 
\begin{align*}
[\mathcal{E}^{h,k}]_{\mathcal{C}^{\frac{1}{2}}_{[0,1]} L^m} & \leq C \Big(h^{\frac{\alpha}{2}} +  h^{\frac{\alpha}{2}\gamma} h^{\frac{1}{2}-\varepsilon} +  h^{-\frac{\alpha}{2}} h^{\alpha\gamma} h^{1-\varepsilon}\Big).
\end{align*}
Now we optimize over $\alpha$. Introduce the following functions:
\begin{align*}
f_1(\alpha) = \frac{\alpha}{2} ,\quad f_2(\alpha) = \frac{\alpha}{2}  \gamma+\frac{1}{2} ~~\mbox{and}~~ f_3(\alpha) & = \left( \gamma-\frac{1}{2} \right) \, \alpha+1 ,\quad \alpha>0.
\end{align*}
Observe that $f_1$ is increasing and $f_2,f_3$ are decreasing. Moreover, we have
\begin{align}\label{eqscaling}
f_1(\alpha)=f_2(\alpha)=f_3(\alpha) \Leftrightarrow \alpha^\star = \frac{1}{1-\gamma} . 
\end{align}
It follows that the error is minimized at $\alpha=\alpha^\star$. Let $k_h =\lfloor h^{-\alpha^\star} \rfloor$. This yields rate of convergence of order $ 1/(2(1-\gamma))-\varepsilon$, which proves \eqref{eqrate1}.

\paragraph{The limit case.} 
Since $\mathcal{B}_p^{\tilde\gamma} \hookrightarrow \mathcal{B}_\infty^{\gamma}$ (recall that $\tilde{\gamma}=\gamma+d/p$), we see that $b^k$ satisfies \eqref{eqbn-inf} and \eqref{eqbn-C1}.
Using \eqref{eqbn-b-critic}, the result of Theorem \ref{thmmain-SDE}$(c)$ reads
\begin{align*}
[\mathcal{E}^{h,k_h}]_{\mathcal{C}^{\frac{1}{2}-\zeta}_{[0,1]} L^m} \leq C \Big( \lfloor h^{-\alpha} \rfloor ^{-\frac{1}{2}} (1+|\log( \lfloor h^{-\alpha} \rfloor ^{-\frac{1}{2}})|)  +  \lfloor h^{-\alpha} \rfloor ^{-\frac{1}{2}\gamma}  h^{\frac{1}{2}-\varepsilon} + \lfloor h^{-\alpha} \rfloor ^{\frac{1}{2}-\gamma)} h^{1-\varepsilon}  \Big)^{e^{-M}-\delta} .
\end{align*}
Optimising over $\alpha$ again, we find $\alpha^\star = 2H$. This yields $[\mathcal{E}^{h,k_h}]_{\mathcal{C}^{1/2-\zeta}_{[0,1]} L^m}  \leq C \Big( h^{H-\varepsilon} |\log(h)| \Big)^{e^{-\M}-\delta}$, which proves \eqref{eqrate1-critic}

\subsection{Proof of Corollary \ref{corSP}}\label{subsecStrongEx}

Assuming \eqref{eqcond-gamma-p-H},
 we let $(X,B)$ and $(\Omega,\mathcal{F},\mathbb{F},\PP)$ be a weak solution to \eqref{eqSDE} given by Theorem~\ref{thWP}, which satisfies 
$[X-B]_{\mathcal{C}^{1/2+H}_{[0,1]} L^m}<+\infty$ (see Remark~\ref{rk:regweaksol}).
 On this probability space and with the same fBm $B$, we define the tamed Euler scheme $(X^{h,k})_{h>0,k\in \N}$. As in Corollary \ref{corbn-choice}, we let $(b^k)_{k\in \N}$ satisfy \eqref{eqbn-inf} and \eqref{eqbn-C1}.
 
\paragraph{The sub-critical case.} Let $(b^k)_{k\in \N}$ satisfy also~\eqref{eqbn-b}. Set $k_h = \lfloor h^{-\frac{1}{1-\gamma}} \rfloor$ and consider the scheme $(X^{h,k_h})_{h \in (0,1)}$.

First, observe that $X^{h,k_h}$ is $\mathbb{F}^B$-adapted. In view of \eqref{eqboundsup} and Corollary \ref{corbn-choice}$(a)$, $X^{h,k_h}_{t}$ converges to $X_{t}$ in $L^m$, for each $t\in [0,1]$. Hence $X_{t}$ is $\mathcal{F}_{t}^B$-measurable and $X$ is therefore a strong solution.

As for the uniqueness, let $Y$ be a strong solution to \eqref{eqSDE} with the same fBm $B$, such that $[Y-B]_{\mathcal{C}^{1/2+H}_{[0,1]} L^{2}}<\infty$. Then by Corollary~\ref{corbn-choice}$(a)$, the Euler scheme $X^{h,k_h}$ approximates both $X$ and $Y$. So $X$ and $Y$ are modifications of one another. Since they are continuous processes, they are indistinguishable. This proves uniqueness in the class of solutions $Y$ such that $[Y-B]_{\mathcal{C}^{1/2+H}_{[0,1]} L^{2}}<\infty$. Observe that Theorem~\ref{thWP} gives a solution in this class such that $[X-B]_{\mathcal{C}^{1/2+H}_{[0,1]}L^{m,\infty}}< +\infty$, for any $m\geq2$.

We now extend the previous pathwise uniqueness to strong solutions that satisfy a weaker regularity. Denote by $X^1$ the solution given by the first part of the proof, which satisfies $[X^1-B]_{\mathcal{C}^{1/2+H}_{[0,1]}L^{2,\infty}}< +\infty$. Assume that there also exists a strong $X^2$ with less regularity, namely such that $[X^2-B]_{\mathcal{C}^{1/2}_{[0,1]}L^{2}}<+\infty$. Let $0 \leq S < T \leq 1$. We will show that $X^1=X^2$. Apply Proposition~\ref{propbound-E1-SDE} with $\psi=X^1-B$, $\phi=X^2-B$, $m=2$, $\tau=1/2$ and $f=b^k$ to get that for $(s,t)\in \Delta_{S,T}$, there is
\begin{align*}
\Big\|\int_{s}^t b^k(X^1_{r})\, dr - \int_{s}^t b^k(X^2_{r})\, dr \Big\|_{L^2} \leq C \|b^k\|_{\mathcal{B}^\gamma_{\infty}} \left( [X^1-X^2]_{\mathcal{C}^{1/2}_{[S,T]}L^2} + \| X^1_{S}-X^2_{S}\|_{L^2} \right) (t-s)^{1+H(\gamma-1)} .
\end{align*}
Since $X^1$ and $X^2$ are solutions in the sense of Definition~\ref{defsol-SDE}, the integrals $\int_{s}^t b^k(X^1_{r})\, dr$ and $\int_{s}^t b^k(X^2_{r})\, dr$ converge in probability respectively to $X^1_{t}-X^1_{s} - (B_t-B_s)$ and $X^2_{t}-X^2_{s} - (B_t-B_s)$. Up to taking a subsequence, the convergence holds in $L^2$, and we thus get
\begin{align*}
\| X^1_t-X^2_t - (X^1_s-X^2_s) \|_{L^2} \leq C \left( [X^1-X^2]_{\mathcal{C}^{1/2}_{[S,T]}L^2} + \| X^1_S-X^2_S \|_{L^2} \right) (t-s)^{1+H(\gamma-1)} .
\end{align*}
Divide now by $(t-s)^{1/2}$ and take the supremum over $(s,t)\in \Delta_{S,T}$ to get
\begin{align*}
[X^1-X^2]_{\mathcal{C}^{1/2}_{[S,T]}L^2}\leq C \left( [X^1-X^2]_{\mathcal{C}^{1/2}_{[S,T]}L^2} + \| X^1_{S}-X^2_{S} \|_{L^2} \right) (T-S)^{1/2+H(\gamma-1)} .
\end{align*}
Taking $S=0$ and $T \leq \ell$ with $\ell := (2C)^{-\frac{1}{1/2+H(\gamma-1)}}$, we deduce that $[X^1-X^2]_{\mathcal{C}^{1/2}_{[0,\ell]}L^2}=0$. Iterating the argument over intervals of size $\ell$, it comes that $[X^1-X^2]_{\mathcal{C}^{1/2}_{[0,1]}L^2}=0$, which concludes the proof.

\paragraph{The limit case.} Assume now that $(b^k)_{k\in \N}$ satisfies~\eqref{eqbn-b-critic}, let $k_{h} = \lfloor h^{2H} \rfloor$ and consider the scheme $(X^{h,k_{h}})_{h \in (0,1)}$. Using Corollary~\ref{corbn-choice}$(b)$ and following the same arguments as in the sub-critical case, we show that $X$ is a strong solution. As for the uniqueness, let $Y$ be a strong solution to \eqref{eqSDE} with the same fBm $B$, such that $[Y-B]_{\mathcal{C}^{1/2+H}_{[0,1]} L^{2}}<\infty$. Then by Corollary~\ref{corbn-choice}$(b)$, the Euler scheme $X^{h,k_h}$ approximates both $X$ and $Y$. 
As in the sub-critical case, this proves uniqueness in the class of solutions $Y$ such that $[Y-B]_{\mathcal{C}^{1/2+H}_{[0,1]} L^{2}}<\infty$.

\subsection{Proof of Corollary \ref{corgama=d/p}}

Let $\varepsilon \in (0, 1/4)$. Then $b$ also belongs to $\mathcal{B}_\infty^{-\varepsilon}$ and \eqref{eqcond-gamma-p-H} is satisfied with $\gamma \equiv -\varepsilon$. Therefore, Corollary~\ref{corSP} states that there exists a strong solution $X$ to \eqref{eqSDE} which satisfies $X-B \in \mathcal{C}_{[0,T]}^{1/2+H} L^{m, \infty}$, which is pathwise unique in the class of strong solutions $Y$ that satisfy $[Y-B]_ {\mathcal{C}_{[0,T]}^{1/2} L^{2}} < +\infty$.

 To prove the second part of the corollary, apply Theorem~\ref{thmmain-SDE}$(b)$ with $\gamma= -\varepsilon$ and $p=\infty$, to get that for $\mathcal{D}$ satisfying \eqref{eqassump-bn-bounded}, we have $\sup_{(h,k)\in \mathcal{D} } [X^{h,k}-B]_{\mathcal{C}^{1/2+H}_{[0,1]} L^m} < \infty$. Moreover, noting that $\|b-b^k\|_{\mathcal{B}_\infty^{-\varepsilon-1}}\leq C\|b-b^k\|_{\mathcal{B}_\infty^{-1}}$, it comes that
\begin{align*}
 [X - X^{h,k}]_{\mathcal{C}^{\frac{1}{2}}_{[0,1]} L^{m}} &  \leq C \left( \|b-b^k\|_{\mathcal{B}_\infty^{-1}} + \|b^k\|_\infty h^{\frac{1}{2}-\varepsilon} + \|b^k\|_{\mathcal{C}^1}  \|b^k\|_\infty  h^{1-\varepsilon} \right) .
\end{align*}
Now take $k_h=\lfloor h^{-\alpha}\rfloor$. As in Section~\ref{subsecCor2.5}, using that $(b^k)_{k \in \mathbb{N}}$ satisfies \eqref{eqbn-b}, \eqref{eqbn-inf} and \eqref{eqbn-C1}, we have $\sup_{h \in (0,1/2)} [X^{h,k_h}-B]_{\mathcal{C}^{1/2+H}_{[0,1]} L^m} < \infty$ and
\begin{align*}
 [X - X^{h,k_h}]_{\mathcal{C}^{\frac{1}{2}}_{[0,1]} L^{m}} &  \leq C \left( h^{\frac{\alpha}{2}} + h^{\frac{1}{2}-\varepsilon-\frac{\alpha\varepsilon}{2}} +  h^{1-\varepsilon-\frac{\alpha}{2}-\alpha\varepsilon} \right) .
\end{align*}
Optimising over $\alpha$ as before, we find $\alpha^\star = 1/(1+\varepsilon)$, which yields a rate of convergence of order $\frac{1}{2(1+\varepsilon)}-\varepsilon$. Since $\frac{1}{2(1+\varepsilon)} \geq 1/2 - \varepsilon$, it finally comes that $[X - X^{h,k_h}]_{\mathcal{C}^{\frac{1}{2}}_{[0,1]} L^{m}} \leq C\, h^{\frac{1}{2}-\eta-\delta} = C\, h^{\frac{1}{2}-2\varepsilon}$.

\section{Regularisation effect of the fBm and the discrete-time fBm}\label{secstochastic-sewing}

In Sections \ref{secbesov} and \ref{secproofs-SDE} we extend technical lemmas on the regularisation effects of the fBm from \cite{anzeletti2021regularisation}, which are used in the proof of Theorem~\ref{thmmain-SDE} and for the regularity of $E^{1,h,k}$ in Section~\ref{subsecE1hn}. Then we prove new regularisation results of the discrete-time fBm in Sections \ref{subsecbound-E2} and \ref{subsecfurtherdtimefBm}, which are used to obtain the regularity of the tamed Euler scheme (Section~\ref{subsecreg-schema}) and of $E^{2,h,k}$ (Section~\ref{subsecreg-E2hn}).

\subsection{Besov estimates}\label{secbesov}

The first lemma below is a generalisation to $\R^d$ of Lemma A.2 in \cite{athreya2020well} and follows its proof exactly, so we omit it.

\begin{lemma}\label{lembesov-spaces}
Let $f$ be a tempered distribution on $\R^d$ and let $\beta \in \R$, $p\in [1,\infty]$. Then for any $a_1,a_2,a_3 \in \mathbb{R}^d$ and $\alpha, \alpha_1, \alpha_2 \in [0,1]$, one has
\begin{itemize}
\item[(i)] $\| f(a + \cdot ) \|_{\mathcal{B}_p^\beta} \leq \| f \|_{\mathcal{B}_p^\beta}$ .
\item[(ii)] $\| f(a_1 + \cdot) - f(a_2 + \cdot) \|_{\mathcal{B}_p^\beta} \leq C |a_1 - a_2 |^{\alpha} \| f \|_{\mathcal{B}_p^{\beta+\alpha}}$ .
\item[(iii)] $\| f(a_1 + \cdot) - f(a_2 + \cdot) -f(a_3 + \cdot) + f(a_3+a_2-a_1+\cdot) \|_{\mathcal{B}_p^\beta} \leq C |a_1 - a_2 |^{\alpha_1} |a_1 - a_3 |^{\alpha_2} \| f \|_{\mathcal{B}_p^{\beta+\alpha_1 + \alpha_2}} .$
\end{itemize} 
\end{lemma}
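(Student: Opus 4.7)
The plan is to work directly with the Littlewood--Paley blocks $\Delta_{j}$ and use two standard ingredients of their calculus: the blocks commute with translations, so $\Delta_{j}(f(a+\cdot))(x) = (\Delta_{j}f)(a+x)$, and Bernstein's inequality (see e.g.\ \cite{bahouri2011fourier}) provides $\|\nabla^{k}\Delta_{j}f\|_{L^{p}} \leq C\,2^{jk}\,\|\Delta_{j}f\|_{L^{p}}$ for $j\geq 0$, with a routine analogue at $j=-1$. Item (i) is then immediate: combining the commutation identity with translation invariance of $L^{p}$ yields $\|\Delta_{j}(f(a+\cdot))\|_{L^{p}} = \|\Delta_{j}f\|_{L^{p}}$, and multiplying by $2^{j\beta}$ and taking the supremum over $j$ gives the result.

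For (ii), I would produce two competing bounds on $\|\Delta_{j}(f(a_{1}+\cdot)-f(a_{2}+\cdot))\|_{L^{p}}$: a trivial bound of size $2\|\Delta_{j}f\|_{L^{p}}$, and a Taylor--Bernstein bound of size $C|a_{1}-a_{2}|\,2^{j}\|\Delta_{j}f\|_{L^{p}}$ obtained by writing the difference as $\int_{0}^{1}\nabla\Delta_{j}f(a_{2}+x+t(a_{1}-a_{2}))\cdot(a_{1}-a_{2})\,dt$. Interpolating geometrically via $\min(M_{1},M_{2}) \leq M_{1}^{1-\alpha}M_{2}^{\alpha}$ produces $C|a_{1}-a_{2}|^{\alpha}\,2^{j\alpha}\,\|\Delta_{j}f\|_{L^{p}}$; multiplying by $2^{j\beta}$ absorbs the extra $2^{j\alpha}$ into a Besov norm of shift $\beta+\alpha$, which is the claimed estimate.

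For (iii), set $u = a_{2}-a_{1}$ and $v = a_{3}-a_{1}$, so that $\Delta_{j}F(x)$ becomes the rectangular second-order difference $(\Delta_{j}f)(a_{1}+x) - (\Delta_{j}f)(a_{1}+u+x) - (\Delta_{j}f)(a_{1}+v+x) + (\Delta_{j}f)(a_{1}+u+v+x)$, which admits the double-Taylor representation $\int_{0}^{1}\!\int_{0}^{1} u^{\top}\nabla^{2}\Delta_{j}f(a_{1}+x+tu+sv)\,v\,ds\,dt$. Together with Bernstein this produces four competing bounds: the trivial one of order $\|\Delta_{j}f\|_{L^{p}}$, the two one-sided Taylor bounds of orders $|u|\,2^{j}\|\Delta_{j}f\|_{L^{p}}$ and $|v|\,2^{j}\|\Delta_{j}f\|_{L^{p}}$, and the full double-Taylor bound of order $|u||v|\,2^{2j}\|\Delta_{j}f\|_{L^{p}}$. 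A two-step geometric interpolation --- first pair the trivial bound with the $|u|$-Taylor bound, and the $|v|$-Taylor bound with the $|u||v|$-double bound, both to extract a factor $|u|^{\alpha_{1}}2^{j\alpha_{1}}$; then interpolate once more in $|v|$ between the two resulting estimates --- yields $C|u|^{\alpha_{1}}|v|^{\alpha_{2}}\,2^{j(\alpha_{1}+\alpha_{2})}\|\Delta_{j}f\|_{L^{p}}$, from which (iii) follows after multiplication by $2^{j\beta}$.

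The main subtlety is precisely this interpolation for (iii): the four bounds must be combined in the correct order to yield a genuine product $|a_{1}-a_{2}|^{\alpha_{1}}|a_{1}-a_{3}|^{\alpha_{2}}$, rather than the weaker $(|a_{1}-a_{2}|+|a_{1}-a_{3}|)^{\alpha_{1}+\alpha_{2}}$ that a naive single-step interpolation would produce. The low-frequency block $j=-1$ in all three items is routine: its kernel is Schwartz, so direct Taylor expansion and translation invariance handle it without appealing to Bernstein.
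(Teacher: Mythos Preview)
Your proposal is correct and follows the standard Littlewood--Paley/Bernstein/interpolation route; the paper itself omits the proof entirely, simply stating that it follows the proof of \cite[Lemma~A.2]{athreya2020well} verbatim, which is precisely the argument you outline.
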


\smallskip

The following Besov estimates for the Gaussian semigroup are borrowed or adapted from \cite{bahouri2011fourier,athreya2020well}. 
\begin{lemma}\label{lemreg-S}
Let $\beta\in \R$, $p\in [1,\infty]$ and $f \in \mathcal{B}_p^\beta$. Then 
\begin{enumerate}[label=(\roman*)]

\item If $\beta<0$, $\| G_t f \|_{L^p(\R^d)} \leq C\, \|f \|_{\mathcal{B}_p^\beta}\, t^{\frac{\beta}{2}}$, for all $t > 0$.

\item If $\beta-\frac{d}{p}<0$, $\| G_t f \|_{\infty} \leq C\, \|f \|_{\mathcal{B}_p^\beta}\, t^{\frac{1}{2}(\beta - \frac{d}{p})}$, for all $t > 0$.
 
 \item $\|G_t f - f\|_{\mathcal{B}_p^{\beta-\varepsilon}} \leq C\, t^{\frac{\varepsilon}{2}}\, \| f \|_{\mathcal{B}_p^\beta}$ for all $\varepsilon\in (0,1]$ and $t>0$. 
 In particular, it follows that $\lim_{t \rightarrow
 0} \|G_t f -f\|_{\mathcal{B}_p^{\tilde{\beta}}}=0$ for every $\tilde{\beta}< \beta$.
 
 \item $\sup_{t>0} \|G_t f \|_{\mathcal{B}_p^\beta} \leq \| f \|_{\mathcal{B}_p^\beta}$.
 
 \item If $\beta-\frac{d}{p}<0$, $\|G_t f \|_{\mathcal{C}^1} \leq C\, \| f \|_{\mathcal{B}_p^\beta}  \, t^{\frac{1}{2}(\beta- \frac{d}{p}-1)}$ for all $t>0$.
\end{enumerate}
\end{lemma}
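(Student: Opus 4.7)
\textbf{Proof proposal for Lemma \ref{lemreg-S}.} The plan is to prove all five items via Littlewood–Paley decomposition and the action of the Gaussian semigroup on dyadic blocks. Since the Fourier multipliers defining $\Delta_j$ and $G_t$ commute, we always have $\Delta_j G_t f = G_t \Delta_j f$. The cornerstone estimate is the following spectral bound: for any $j \geq 0$ and any tempered distribution $g$ whose Fourier transform is supported in an annulus of size $\sim 2^j$,
\begin{equation*}
\|G_t g\|_{L^p(\mathbb{R}^d)} \leq C\, e^{-c\, t\, 2^{2j}}\, \|g\|_{L^p(\mathbb{R}^d)},
\end{equation*}
for universal constants $c, C>0$. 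This is obtained by writing $G_t g = (g_t \ast \tilde\varphi_j) \ast g$, where $\tilde\varphi_j = \mathcal{F}^{-1}(\tilde\varphi(2^{-j}\cdot))$ with $\tilde\varphi$ equal to one on the support of $\varphi$, and controlling $\|g_t \ast \tilde\varphi_j\|_{L^1}$ via Gaussian scaling.

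With this in hand, item $(iv)$ is immediate from Young's inequality applied to $G_t \Delta_j f = g_t \ast \Delta_j f$. For item $(iii)$, the Fourier symbol of $G_t - \mathrm{Id}$ is $e^{-t|\xi|^2/2} - 1$, whose modulus is bounded by $\min(1, t|\xi|^2) \leq (t|\xi|^2)^{\varepsilon/2}$ for $\varepsilon \in (0,1]$. Localising to $|\xi| \sim 2^j$ and using the spectral bound yields $\|(G_t - \mathrm{Id})\Delta_j f\|_{L^p} \leq C (t\, 2^{2j})^{\varepsilon/2} \|\Delta_j f\|_{L^p}$, and multiplying by $2^{j(\beta-\varepsilon)}$ produces the claim. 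The convergence $\|G_t f - f\|_{\mathcal{B}_p^{\tilde\beta}} \to 0$ for $\tilde\beta<\beta$ is then obtained by taking $\varepsilon = \min(\beta-\tilde\beta, 1)$ and, if $\beta - \tilde\beta > 1$, composing with the embedding $\mathcal{B}_p^{\beta-1} \hookrightarrow \mathcal{B}_p^{\tilde\beta}$.

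For item $(i)$, one decomposes $\|G_t f\|_{L^p} \leq \sum_{j\geq -1} \|G_t \Delta_j f\|_{L^p}$, uses the spectral bound together with $\|\Delta_j f\|_{L^p} \leq 2^{-j\beta}\|f\|_{\mathcal{B}_p^\beta}$, and estimates
\begin{equation*}
\sum_{j \geq 0} e^{-c t\, 2^{2j}}\, 2^{-j\beta} \leq C\, t^{\beta/2},
\end{equation*}
where the constant is finite precisely because $\beta < 0$ (this can be checked by comparing the sum with the integral $\int_0^\infty e^{-cu}\, u^{-\beta/2-1}\, du$ via $u = t\, 2^{2j}$, and absorbing the low-frequency term $j=-1$ in the constant). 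Items $(ii)$ and $(v)$ are obtained by the same computation after invoking Bernstein's inequality to pass from $L^p$ to $L^\infty$ (resp.\ to estimate the gradient): $\|\Delta_j h\|_{L^\infty} \leq C\, 2^{jd/p}\, \|\Delta_j h\|_{L^p}$ and $\|\nabla \Delta_j h\|_{L^\infty} \leq C\, 2^j\, \|\Delta_j h\|_{L^\infty}$. Summing yields the geometric series $\sum_j e^{-ct\, 2^{2j}} 2^{-j(\beta - d/p)}$ and $\sum_j e^{-ct\, 2^{2j}} 2^{-j(\beta - d/p - 1)}$ respectively, which converge to the expected powers of $t$ thanks to the assumption $\beta - d/p < 0$.

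No step is truly difficult: all estimates are classical applications of Littlewood–Paley theory, and the main bookkeeping is to check which index plays the role of $\beta$ in the geometric sum for each item. The slightly technical point is to verify carefully the convergence of the frequency sums near $t\to 0$, which requires the sign assumptions on $\beta$ (resp.\ $\beta - d/p$, resp.\ $\beta - d/p - 1$) stated in the lemma; one could either use the integral comparison above or, equivalently, split the sum at the critical index $j^\star \sim \frac{1}{2}\log_2(1/t)$ and bound each piece separately.
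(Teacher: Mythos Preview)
Your proposal is correct and follows essentially the same Littlewood--Paley route as the paper: both rely on the commutation $\Delta_j G_t = G_t \Delta_j$, the smoothing estimate for the heat semigroup on spectrally localised functions, and Bernstein-type inequalities. The paper is more referential---it defers $(i)$, $(iv)$, $(v)$ to \cite{athreya2020well} and the key multiplier bound behind $(iii)$ to \cite{MourratWeber}---whereas you spell out the arguments. The one structural difference worth noting is $(ii)$: rather than redoing the dyadic sum with Bernstein, the paper observes that the embedding $\mathcal{B}^\beta_p \hookrightarrow \mathcal{B}^{\beta-d/p}_\infty$ reduces $(ii)$ directly to $(i)$ with $p=\infty$, which is a slightly cleaner shortcut than summing again. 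Your direct approach via Bernstein is of course equivalent, since that embedding is itself proved by Bernstein's inequality.
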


\begin{proof}
\begin{enumerate}[label={\it(\roman*)}]
\item  The proof of Lemma A.3$(i)$ in \cite{athreya2020well} extends right away to dimension $d\geq1$.

\item Using $(i)$ for $\beta-\frac{d}{p}$ instead of $\beta$ and the embedding $\mathcal{B}^\beta_{p} \hookrightarrow \mathcal{B}^{\beta-\frac{d}{p}}_{\infty}$, there is
\begin{equation*}
\| G_{t} f\|_{L^\infty(\R^d)} \leq C \, \|f\|_{\mathcal{B}^{\beta-d/p}_{\infty}} \, t^{\frac{1}{2}(\beta-\frac{d}{p})} \leq C \, \|f\|_{\mathcal{B}^{\beta}_{p}} \, t^{\frac{1}{2}(\beta-\frac{d}{p})} .
\end{equation*}

\item This is an adaptation of Lemma A.3$(ii)$ in \cite{athreya2020well} to dimension $d\geq 1$ that we detail briefly. From \cite[Lemma 4]{MourratWeber}, we have that for $g$ such that the support of $\mathcal{F}g$ is in a ball of radius $\lambda\geq 1$, there is for all $t\geq 0$,
\begin{equation*}
\|G_{t}g - g\|_{L^p(\R^d)} \leq C\, (t\lambda^2\wedge 1) \|g\|_{L^p(\R^d)}.
\end{equation*}
For any $j\geq -1$, the support of $\mathcal{F}(\Delta_{j}f)$ is included in a ball of radius $2^j$. Hence, 
\begin{align*}
2^{j(\beta-\varepsilon)}\|G_{t}(\Delta_{j}f) - \Delta_{j}f\|_{L^p(\R^d)} &\leq C\, 2^{j(\beta-\varepsilon)} (t 2^{2j}\wedge 1) \|\Delta_{j}f\|_{L^p(\R^d)}\\
&\leq C\, 2^{-j\varepsilon} (t 2^{2j}\wedge 1)^{\frac{\varepsilon}{2}}\, 2^{j\beta} \|\Delta_{j}f\|_{L^p(\R^d)}\\
&\leq C\,  t^{\frac{\varepsilon}{2}}\, 2^{j\beta} \|\Delta_{j}f\|_{L^p(\R^d)}.
\end{align*}
The result follows. 

\item--~{\it(v)} The proof is the same as in the $d=1$ case, see \cite[Lemma A.3$(iii)$ and Lemma A.3$(iv)$]{athreya2020well}.
\end{enumerate}
\end{proof}

The next lemma describes some time regularity estimates of random functions of the fractional Brownian motion in Besov norms.

\begin{lemma}\label{lemreg-B}
Let $(\Omega,\mathcal{F},\mathbb{F},\mathbb{P})$ be a filtered probability space and $B$ be an $\mathbb{F}$-fBm. 
Let $\beta<0$, $p \in [1,\infty]$ and $e\in \N^*$. Then there exists a constant $C>0$ such that for any $(s,t)\in \Delta_{0,1}$, any bounded measurable function  $f\colon \R^d\times \R^e\to \R^d$ and any $\mathcal{F}_{s}$-measurable $\R^e$-valued random variable $\Xi$ satisfying $\|f(\cdot,\Xi)\|_{\mathcal{C}^1}<\infty$ almost surely, it holds
\begin{enumerate}[label=(\roman*)]

\item $\EE^{s}[f(B_{t},\Xi)]=G_{\sigma_{{s},{t}}^2}f(\EE^{s}[B_{t}],\Xi)$, where $G$ is the Gaussian semigroup introduced in \eqref{eqsemi-group-gaussian} and $$\sigma_{{s},{t}}^2\colon=\var{(B^{(i)}_{t}-\EE^{s}[B^{(i)}_{t}])},$$ for any component $B^{(i)}$ of the fBm\text{;} 

\item $| \mathbb{E}^s f( B_t,\Xi) | \leq C \|f(\cdot,\Xi)\|_{\mathcal{B}_p^\beta} (t-s)^{H(\beta-\frac{d}{p})}$\text{;}

\item $\| f(B_t,\Xi) - \mathbb{E}^s f(B_t,\Xi) \|_{L^1} \leq C \big\| \| f(\cdot,\Xi) \|_{\mathcal{C}^1}\big\|_{L^2} (t-s)^H$ .
\item Furthermore, for any $u$ in the interval $(s,t)$ and $m \in[1, p]$ there exists a constant $C>0$ such that 
$\left\|\mathbb{E}^{u}\left[f\left(B_{t}, \Xi\right)\right]\right\|_{L^{m}} \leq C \left\| \| f(\cdot, \Xi)\|_{\mathcal{B}_{p}^{\beta}}\right\|_{L^{m}} \left(t-u\right)^{H \beta}\left(u-s\right)^{-\frac{d}{2 p}}\left(t-s\right)^{d\frac{1-2 H}{2 p}}$.
\end{enumerate}
\end{lemma}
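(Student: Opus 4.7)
All four parts rest on the conditional Gaussianity of the $\mathbb{F}$-fBm, which we exploit via the Volterra-type representation underlying the operator $\mathcal{A}_H$. Since $W=\mathcal{A}_H B$ is an $\mathbb{F}$-Brownian motion and the components of $B$ are independent $\mathbb{F}$-fBms, we obtain two basic facts: conditionally on $\mathcal{F}_s$, each component of $B_t$ is Gaussian with mean $\mathbb{E}^s[B_t]$ and variance $\sigma_{s,t}^2 \asymp (t-s)^{2H}$; and for $s<u<t$, conditionally on $\mathcal{F}_s$, each component of $\mathbb{E}^u[B_t]$ is Gaussian with mean $\mathbb{E}^s[B_t]$ and variance $\rho^2 \leq C(u-s)(t-s)^{2H-1}$ (this second bound being specific to $H<1/2$).

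Part (i) then follows by freezing the $\mathcal{F}_s$-measurable variable $\Xi$ and integrating $f(\cdot,\Xi)$ against the conditional Gaussian density of $B_t$, identifying the conditional expectation with the Gaussian convolution $G_{\sigma_{s,t}^2} f(\mathbb{E}^s[B_t],\Xi)$. Part (ii) results from bounding $G_{\sigma_{s,t}^2} f(y,\Xi)$ pointwise via Lemma~\ref{lemreg-S}(ii) (applicable since $\beta<0$ implies $\beta-d/p<0$) and inserting $\sigma_{s,t}\asymp(t-s)^H$. For (iii), let $\tilde B_t$ be a copy of $B_t$ that is conditionally independent of $B_t$ given $\mathcal{F}_s$; conditional Jensen combined with the Lipschitz bound yields
\[
|f(B_t,\Xi)-\mathbb{E}^s f(B_t,\Xi)| \leq \|f(\cdot,\Xi)\|_{\mathcal{C}^1}\, \mathbb{E}^s|B_t-\tilde B_t|,
\]
with $\mathbb{E}^s|B_t-\tilde B_t| \leq (\mathbb{E}^s|B_t-\tilde B_t|^2)^{1/2} \leq C\sigma_{s,t} \leq C(t-s)^H$; taking the global expectation and applying Cauchy-Schwarz to split the $\|f(\cdot,\Xi)\|_{\mathcal{C}^1}$ factor from the deterministic time factor produces the claimed $L^2$-bound.

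The main work is (iv). From (i), $\mathbb{E}^u[f(B_t,\Xi)] = G_{\sigma_{u,t}^2} f(\mathbb{E}^u[B_t],\Xi)$. Conditioning on $\mathcal{F}_s$ and disintegrating along the conditional Gaussian law of $\mathbb{E}^u[B_t]$, we obtain
\[
\mathbb{E}^s\big[|\mathbb{E}^u f(B_t,\Xi)|^m\big] = \int_{\mathbb{R}^d} \big|G_{\sigma_{u,t}^2} f(y,\Xi)\big|^m\, g_{\rho^2}\big(y-\mathbb{E}^s[B_t]\big)\, dy.
\]
H\"older's inequality with exponents $p/m$ and $p/(p-m)$ (legal since $m\leq p$) together with Lemma~\ref{lemreg-S}(i) bounds the right-hand side by $C\|f(\cdot,\Xi)\|_{\mathcal{B}_p^\beta}^m\, \sigma_{u,t}^{m\beta}\, \|g_{\rho^2}\|_{L^{p/(p-m)}}$, while a direct Gaussian computation gives $\|g_{\rho^2}\|_{L^{p/(p-m)}} \leq C\rho^{-dm/p}$. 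Taking full expectation, extracting $m$-th roots, and inserting $\sigma_{u,t}^\beta \leq C(t-u)^{H\beta}$ together with $\rho^{-d/p} \leq C(u-s)^{-d/(2p)}(t-s)^{d(1-2H)/(2p)}$ produces the stated inequality.

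The main technical obstacle is the kernel estimate $\rho^2 \leq C(u-s)(t-s)^{2H-1}$, which controls the singular Volterra kernel $K_H(t,\cdot)$ on the interval $[s,u]$, away from its endpoint singularity at $r=t$. This estimate is classical in the fBm literature but is precisely what drives the delicate balance of exponents in (iv): the negative power of $u-s$ is exactly compensated by the small factor $(t-s)^{d(1-2H)/(2p)}$ arising from the long-memory structure when $H<1/2$, and this is the regularising gain the statement captures.
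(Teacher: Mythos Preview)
Your overall strategy for all four parts matches the paper's approach: conditional Gaussianity via (i), then Lemma~\ref{lemreg-S} for (ii), Lipschitz plus conditional independence for (iii), and for (iv) the disintegration of $\mathbb{E}^u[B_t]$ given $\mathcal{F}_s$ followed by H\"older in $x$. The paper bounds $\|g_{\rho^2}\|_{L^{p/(p-m)}}$ by writing $g_{\rho^2}=G_{\rho^2}\delta_0$, embedding $\delta_0\in\mathcal{B}_1^0\hookrightarrow\mathcal{B}_{p/(p-m)}^{-dm/p}$ and applying Lemma~\ref{lemreg-S}(i), whereas you do a direct Gaussian scaling computation; both produce the same $\rho^{-dm/p}$ factor.

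There is, however, a genuine error in your treatment of the variance $\rho^2=\var(\mathbb{E}^u[B_t^{(i)}]-\mathbb{E}^s[B_t^{(i)}])$. You state and highlight an \emph{upper} bound $\rho^2\leq C(u-s)(t-s)^{2H-1}$, but the inequality you then use, namely $\rho^{-d/p}\leq C(u-s)^{-d/(2p)}(t-s)^{d(1-2H)/(2p)}$, requires the \emph{lower} bound $\rho^2\geq c(u-s)(t-s)^{2H-1}$, since the exponent $-d/p$ is negative. This lower bound is precisely the local nondeterminism property of the fBm (see \eqref{eqLND} in the paper), and it is what makes the argument go through: a small $\rho$ would make the Gaussian density $g_{\rho^2}$ concentrate and blow up in $L^{p/(p-m)}$, so one needs $\rho$ bounded \emph{away} from zero at the stated rate. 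Your final paragraph also misidentifies the mechanism: the ``regularising gain'' comes from the fact that $\rho^2$ is \emph{not too small} (LND), not from it being small. This is not merely a sign slip in exposition, since the direction of the estimate is the entire content of the step; once you replace the upper bound by the correct LND lower bound, your argument is complete and coincides with the paper's.
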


\begin{proof}
The proofs of $(i)$, $(ii)$, $(iii)$ are similar to $(a)$, $(b)$, $(c)$ in \cite[Lemma 5.1]{anzeletti2021regularisation} but they rely now on Lemma~\ref{lembesov-spaces} and Lemma~\ref{lemreg-S}. We only reproduce the proof of $(iv)$ which is similar to $(d)$ in \cite[Lemma 5.1]{anzeletti2021regularisation},  to emphasize where the dimension $d$ appears.

For $u \in (s,t)$, we have from $(i)$ that
\begin{align*}
\mathbb{E}^{s} | \EE^{u} [ f(B_{t}, \Xi ) ] |^m 
& = \EE^s | G_{\sigma^2_{u,t}} f(\EE^u B_t , \Xi) |^m \\
& = \EE^{s} | G_{\sigma^2_{u,t}} f(\EE^u B_t - \EE^{s} B_t + \EE^s B_t , \Xi) |^m .
\end{align*}
Notice that $\EE^s B_t$ is independent of $\EE^u B_t - \EE^s B_t$, which is a Gaussian variable with mean zero and covariance $\sigma^2_{s,u,t} I_{d}$ where $\sigma^2_{s,u,t} \colon = \textrm{Var}(\EE^u B_t^{(i)} - \EE^s B_t^{(i)})$,
for any component $B^{(i)}$ of the fBm. It follows that
\begin{align*}
\mathbb{E}^{s} | \EE^{u} [ f(B_{t}, \Xi ) ] |^m &  =  \int_{\R^d} g_{ \sigma^{2}_{s,u,t} } (y) | G_{\sigma^2_{u,t}} f( \EE^s B_t + y, \Xi) |^m \diff y .
\end{align*}
Let $q=\frac{p}{m}$ and $q'=\frac{q}{q-1}$. Using H\"older's inequality, we get
\begin{align*}
\mathbb{E}^{s} | \EE^{u} [ f(B_{t}, \Xi ) ] |^m &\leq   \| g_{ \sigma^{2}_{s,u,t} } \|_{L^{q'}(\R^d)} \|G_{\sigma^2_{u,t}} f(\cdot, \Xi) \|_{L^p(\R^d)}^m  \\
&= \| G_{ \sigma^{2}_{s,u,t} } \delta_0 \|_{L^{q'}(\R^d)} \|G_{\sigma^2_{u,t}} f(\cdot, \Xi) \|_{L^p(\R^d)}^m .
\end{align*}
By Besov embedding, $\delta_0 \in \mathcal{B}_1^0 \hookrightarrow \mathcal{B}_{q'}^{-d+d/q'} = \mathcal{B}_{q'}^{-dm/p}$. Hence by Lemma \ref{lemreg-S}$(i)$,
\begin{align*}
\mathbb{E}^{s} | \EE^{u} [ f(B_{t}, \Xi ) ] |^m &  \leq   C \| \delta_0 \|_{\mathcal{B}_{q'}^{-dm/p}} \ \sigma_{s,u,t}^{-dm/p} \ \| f(\cdot, \Xi) \|^m_{\mathcal{B}_p^\beta} \ \sigma_{u,t}^{\beta m} .
\end{align*}
The fBm has the following local nondeterminism properties (see e.g. (C.3) and (C.5) in \cite{anzeletti2021regularisation}): there exists $C_{1}, C_{2}>0$ such that
\begin{align}\label{eqLND}
\sigma^2_{u,t}  = C_{1} (t-u)^{2H} ~~ \mbox{and}~~ \sigma^2_{s,u,t}\ge C_{2} (u-s) (t-s)^{-1+2H} . 
\end{align}
It follows that
\begin{align*}
\mathbb{E}^{s} | \EE^{u} [ f(B_{t}, \Xi ) ] |^m &  \leq  C \| \delta_0 \|_{\mathcal{B}_{q'}^{-dm/p}} \ \| f(\cdot, \Xi) \|^m_{\mathcal{B}_p^\beta} \  (u-s)^{-\frac{dm}{2p}} (t-s)^{(1-2H)\frac{dm}{2p}}  \ (t-u)^{ H \beta m} .
\end{align*}
We conclude by taking the expectation in the above inequality and raising both sides to the power $1/m$.
\end{proof}

\subsection{Regularisation properties of the $d$-dimensional fBm}\label{secproofs-SDE}

From now on, $X$ will always denote a weak solution to \eqref{eqSDE} with drift $b\in \mathcal{B}^\gamma_{\infty}$, with $\gamma \in \mathbb{R}$ satisfying \eqref{eqcond-gamma-p-H}. For such $X$, recall that the process $K$ is defined by \eqref{solution1}. Let $(b^k)_{k\in \mathbb{N}}$ be a sequence of smooth functions that converges to $b$ in $\mathcal{B}_\infty^{\gamma-}$. For $k \in \mathbb{N}$ and $h \in (0,1)$, recall that $X^{h,k}$ denotes the tamed Euler scheme \eqref{defEulerSDE} and that the process $K^{h,k}$ is defined by \eqref{eqdef-Khn}. 

\smallskip

First, Lemma \ref{lem1streg} extends \cite[Lemma C.3]{anzeletti2021regularisation} to dimension $d\geq 1$. Its proof relies on the stochastic sewing Lemma of \cite{le2020stochastic} (recalled in Lemma~\ref{lemSSL}) and  is close to the proof of \cite[Lemma D.2]{anzeletti2021regularisation}. It is postponed to the Appendix~\ref{app1streg}.

\begin{lemma} \label{lem1streg}
Let $\beta \in (-1/(2H),0)$ such that $\beta-d/p \in (-1/H,0)$. Let $m \in [2, \infty]$, $q \in [m, \infty]$ and assume that $p\in [q,+\infty]$. Then there exists a constant $C>0$ such that for any $0\leq S\leq T$, 
any $\mathcal{F}_{S}$-measurable random variable $\Xi$ in $\R^e$ and any bounded 
measurable function $f\colon \mathbb{R}^d\times\R^e \rightarrow \mathbb{R}^d$ fulfilling
\begin{enumerate}[label=(\roman*)]
    \item $\EE\left[ \|f(\cdot,\Xi)\|_{\mathcal{C}^1}^2\right]<\infty$\text{;}
    \item $\EE\left[ \|f(\cdot,\Xi)\|_{\mathcal{B}_p^{\beta}}^q\right]<\infty$,
\end{enumerate}
we have  for any $(s,t) \in \Delta_{S,T}$ that
\begin{equation}\label{eqregulINT}
    \Big\| \Big( \EE^S \Big| \int_s^t f(B_r,\Xi) \, dr \Big|^m \Big)^{\frac{1}{m}}  \Big\|_{L^q} \leq C \, \| \|f(\cdot,\Xi)\|_{\mathcal{B}_p^{\beta}}\|_{L^q}\, (t-s)^{1+H(\beta-\frac{d}{p})} .
\end{equation}
\end{lemma}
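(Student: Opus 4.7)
I would prove this via the stochastic sewing lemma (SSL, Lemma~\ref{lemSSL}) applied to the germ
\[
A_{s,t} := \EE^s\Big[\int_s^t f(B_r,\Xi)\, dr\Big], \quad (s,t)\in \Delta_{S,T}.
\]
The integral on the right-hand side of \eqref{eqregulINT} is well-defined pathwise thanks to the $\mathcal{C}^1$-assumption~$(i)$, so the strategy is to identify it with the sewn process and read off the claimed $L^{m,q}$ bound. A key simplification is that the tower property yields $\EE^s \delta A_{s,u,t} = 0$, so the SSL hypothesis on conditional increments of $A$ is automatically satisfied, and only the unconditional $L^{m,q}$-estimate of $\delta A_{s,u,t}$ will need serious work.

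For the size condition on $A_{s,t}$, I would apply Lemma~\ref{lemreg-B}$(ii)$ conditionally on $\mathcal{F}_S$ to obtain $|\EE^s f(B_r,\Xi)|\leq C\,\|f(\cdot,\Xi)\|_{\mathcal{B}_p^\beta}\,(r-s)^{H(\beta-d/p)}$ and then integrate $r$ over $(s,t)$; the hypothesis $\beta - d/p > -1/H$ is precisely what keeps the resulting exponent $1+H(\beta-d/p)$ positive, giving
\[
\big\|(\EE^S|A_{s,t}|^m)^{1/m}\big\|_{L^q}\leq C\,\big\|\|f(\cdot,\Xi)\|_{\mathcal{B}_p^\beta}\big\|_{L^q}\,(t-s)^{1+H(\beta-d/p)}.
\]
For the $\delta A$ bound, I would use the three-factor split of Lemma~\ref{lemreg-B}$(iv)$, obtaining after integration over $r\in(u,t)$ a control of the form
\[
\|\delta A_{s,u,t}\|_{L^{m,q}} \lesssim (u-s)^{-d/(2p)}(t-s)^{d(1-2H)/(2p)}(t-u)^{1+H\beta}.
\]
The integrable singularity $(u-s)^{-d/(2p)}$, once summed across successive sewing midpoints in an equipartition, combines with the martingale-increment property $\EE^s\delta A_{s,u,t}= 0$ and a BDG square-function estimate to deliver exactly the SSL threshold: it is the hypothesis $\beta > -1/(2H)$ (equivalently $1+H\beta > 1/2$) that makes the resulting sum of squares summable, while $\beta-d/p>-1/H$ controls the final target exponent.

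With both SSL inputs in place, Lemma~\ref{lemSSL} produces a unique sewn process $\mathcal{A}$ with the advertised $L^{m,q}$ growth. Because $\int_0^\cdot f(B_r,\Xi)\, dr$ is a pathwise-defined additive functional whose $\mathcal{F}_s$-conditional expectation coincides with $A_{s,t}$, the uniqueness assertion of SSL identifies it with $\mathcal{A}$, yielding \eqref{eqregulINT}. The main technical obstacle will be the exponent bookkeeping in the $\delta A$ estimate: one must rely on the fine split of Lemma~\ref{lemreg-B}$(iv)$ rather than the coarser $(ii)$, and then exploit BDG plus the martingale-increment structure to beat the naive $(t-s)^{1+H(\beta-d/p)}$ (which could fall below $1/2$ and would leave SSL inapplicable). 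This two-step improvement is exactly what lets the two independent hypotheses $\beta > -1/(2H)$ and $\beta - d/p > -1/H$ jointly suffice.
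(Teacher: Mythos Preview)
Your approach is essentially the paper's: same germ $A_{s,t}=\EE^s\int_s^t f(B_r,\Xi)\,dr$, same observation $\EE^s\delta A_{s,u,t}=0$, Lemma~\ref{lemreg-B}$(iv)$ for the $\delta A$ bound, Lemma~\ref{lemreg-B}$(ii)$ for $\|A_{s,t}\|$, and SSL to conclude. One correction: in the $\delta A$ estimate the singular weight sits at the measurability time $S$ of $\Xi$, not at $s$. Applying Lemma~\ref{lemreg-B}$(iv)$ with its ``$s$'' equal to $S$ and then using $r-S\leq 2(u-S)$ gives
\[
\|\delta A_{s,u,t}\|_{L^q}\leq C\,\big\|\|f(\cdot,\Xi)\|_{\mathcal{B}_p^\beta}\big\|_{L^q}\,(u-S)^{-dH/p}\,(t-u)^{1+H\beta},
\]
so that $\varepsilon_2=\tfrac12+H\beta>0$ and $\alpha_2=dH/p$; this is exactly the singular-weight hypothesis \eqref{sts2} of Lemma~\ref{lemSSL}, which already packages the BDG/square-function argument you sketch. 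Finally, the identification of $\mathcal{A}$ with $\int_S^\cdot f(B_r,\Xi)\,dr$ is done via \eqref{sts3} using Lemma~\ref{lemreg-B}$(iii)$ and assumption $(i)$.
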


As a consequence of Lemma \ref{lemSSL} and Lemma \ref{lem1streg}, we get the following property of regularisation of the $d$-dimensional fBm, which will be used in the proof of Theorem \ref{thmmain-SDE}. It can be compared to \cite[Lemma 7.1]{anzeletti2021regularisation}, which is stated for one-dimensional processes in the sub-critical case only. The proof is postponed to Appendix \ref{appregfBm}.

\begin{proposition}\label{propregfBm}
Let $m\in[2,\infty)$, $q \in [m, +\infty]$.
 
 \begin{enumerate}[label=(\alph*)]
 \item\label{item3.5(a)} \underline{The sub-critical case}: let $\beta\in (-1/(2H),0)$ and let $\tau \in (0,1)$ such that $H(\beta-1)+\tau>0$. 
There exists a constant $C>0$
 such that for any $f\in \mathcal{C}^\infty_b(\mathbb{R}^d, \R^d)\cap \mathcal{B}_\infty^\beta$, any $\R^d$-valued stochastic process $(\psi_t)_{t\in[0,1]}$ adapted to $\mathbb{F}$, any $(S,T) \in \Delta_{0,1}$ and $(s,t) \in \Delta_{S,T}$ we have
\begin{equation} \label{eq3.5a}
	\begin{split}
		\Big\| \Big( \EE^S \Big| \int_s^t f(B_r+\psi_r) \, dr \Big|^m \Big)^{\frac{1}{m}}  \Big\|_{L^q}  \leq  &\, C\,  \|f\|_{\mathcal{B}_\infty^\beta}(t-s)^{1+H\beta} \\
		&+C \|f\|_{\mathcal{B}_\infty^\beta} [\psi]_{\mathcal{C}^\tau_{[S,T]}L^{m,q}} \, (t-s)^{1+H(\beta -1)+\tau}.	
	\end{split}
\end{equation}

\item\label{item3.5(b)} \underline{The limit case}: let $p \in [m,+\infty)$, $\beta-d/p=-1/(2H)$. There exists a constant $C>0$
 such that for any $f\in \mathcal{C}^\infty_b(\mathbb{R}^d, \R^d) \cap \mathcal{B}_p^{\beta+1}$, any $\R^d$-valued stochastic process $(\psi_t)_{t\in[0,1]}$ adapted to $\mathbb{F}$, any $(S,T) \in \Delta_{0,1}$ and any $(s,t) \in \Delta_{S,T}$, we have
\begin{equation} \label{eq3.5b}
	\begin{split}
		\Big\| \int_s^t f(B_r+\psi_r) \, dr \Big\|_{L^m} \leq  &\, C\,  \|f\|_{\mathcal{B}_p^{\beta}} \, \left(1+ \left| \log\frac{\|f\|_{\mathcal{B}_p^{\beta}}}{\|f\|_{\mathcal{B}_p^{\beta+1}}} \right| \right) \, \left( 1+[\psi]_{\mathcal{C}^{\frac{1}{2}+H}_{[S,T]}L^m} \right)  (t-s)^{\frac{1}{2}}.	
	\end{split}
\end{equation}
\end{enumerate}

\end{proposition}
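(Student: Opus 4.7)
For both items, the strategy is to set up the stochastic sewing lemma (SSL) with the frozen germ
\[
A_{s,t} \;=\; \mathbb{E}^s \int_s^t f(B_r + \psi_s)\, dr,
\]
and identify $\mathcal{A}_{s,t} = \int_s^t f(B_r + \psi_r)\, dr$ as the process the SSL constructs. The two ingredients are (i) a bound on $A_{s,t}$ which will yield the first term of the right-hand sides, and (ii) a bound on the conditional germ defect $\mathbb{E}^s \delta A_{s,u,t}$ which, via SSL, yields the second term.

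\textbf{Germ bound.} Since $\psi_s$ is $\mathcal{F}_s$-measurable and $f$ is smooth and bounded, we apply Lemma~\ref{lem1streg} with the outer time variable set to $s$, taking $\Xi = \psi_s$ and using the shift-invariance of the Besov norm from Lemma~\ref{lembesov-spaces}$(i)$. Passing from $\mathbb{E}^s$ to $\mathbb{E}^S$ requires only conditional Jensen plus the tower property (legitimate because $q\geq m$), and returns
\[
\bigl\| \bigl(\mathbb{E}^S |A_{s,t}|^m\bigr)^{1/m}\bigr\|_{L^q}
\;\leq\; C\, \|f\|_{\mathcal{B}_p^\beta}\, (t-s)^{1+H(\beta - d/p)}.
\]

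\textbf{Germ defect.} A direct telescoping identity yields
$\mathbb{E}^s \delta A_{s,u,t} = \mathbb{E}^s \int_u^t \bigl[f(B_r + \psi_s) - f(B_r + \psi_u)\bigr]\, dr$. Conditioning first on $\mathcal{F}_u$ (so that $\psi_s, \psi_u$ are frozen) and applying Lemma~\ref{lem1streg} to the difference, combined with the shift estimate $\|f(\cdot + \psi_s) - f(\cdot + \psi_u)\|_{\mathcal{B}_p^{\beta-1}} \leq C\, |\psi_s - \psi_u|\, \|f\|_{\mathcal{B}_p^{\beta}}$ from Lemma~\ref{lembesov-spaces}$(ii)$ with $\alpha=1$, produces
\[
\bigl\|\bigl(\mathbb{E}^S|\mathbb{E}^s\delta A_{s,u,t}|^m\bigr)^{1/m}\bigr\|_{L^q}
\;\leq\; C\,\|f\|_{\mathcal{B}_p^\beta}\, [\psi]_{\mathcal{C}^{\tau}_{[S,T]} L^{m,q}}\, (u-s)^{\tau}\, (t-u)^{1 + H(\beta - 1 - d/p)}.
\]
In the sub-critical case, the total exponent in $(t-s)$ is $1 + H(\beta - 1 - d/p) + \tau$, which is strictly greater than $1$ under the hypothesis $H(\beta - d/p - 1) + \tau > 0$. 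The standard SSL then applies and delivers both terms of \eqref{eq3.5a}.

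\textbf{The limit case.} Here $1 + H(\beta - d/p) = \tfrac{1}{2}$ and, with $\tau = \tfrac{1}{2} + H$, the defect exponent is exactly $1$: we are at the critical threshold. This is the main obstacle and forces the use of the critical SSL of \cite{athreya2020well,FHL}, which produces a logarithmic correction. To obtain the precise log appearing in \eqref{eq3.5b}, we play two shift estimates against each other: the bound above uses $\|f(\cdot+\psi_s)-f(\cdot+\psi_u)\|_{\mathcal{B}_p^{\beta-1}}\leq |\psi_s-\psi_u|\,\|f\|_{\mathcal{B}_p^\beta}$ (giving constant $\|f\|_{\mathcal{B}_p^\beta}$ at exponent $1$), while the alternative Lemma~\ref{lembesov-spaces}$(ii)$ inequality at higher regularity, $\|f(\cdot+\psi_s)-f(\cdot+\psi_u)\|_{\mathcal{B}_p^{\beta}}\leq |\psi_s-\psi_u|\,\|f\|_{\mathcal{B}_p^{\beta+1}}$, followed by Lemma~\ref{lem1streg} at level $\beta$, yields a supercritical bound of order $(t-s)^{1+H}$ with constant $\|f\|_{\mathcal{B}_p^{\beta+1}}$. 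Feeding the pair into the critical SSL (where the optimal scale at which one switches between the two regimes is $(\|f\|_{\mathcal{B}_p^\beta}/\|f\|_{\mathcal{B}_p^{\beta+1}})^{1/H}$) outputs exactly the factor $(1+|\log(\|f\|_{\mathcal{B}_p^\beta}/\|f\|_{\mathcal{B}_p^{\beta+1}})|)$ of \eqref{eq3.5b}. The hypothesis $\beta > 1 - 1/(2H)$ is precisely what ensures that the supercritical estimate at level $\beta$ is available (so that the regularisation lemma can be applied at that level with the seminorm $[\psi]_{\mathcal{C}^{1/2+H}_{[S,T]}L^m}$ rather than at a higher order).
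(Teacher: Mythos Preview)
Your overall strategy matches the paper's: freeze $\psi$ at the left endpoint, estimate the germ via Lemma~\ref{lem1streg}, and in the limit case feed two conditional-defect bounds (one with constant $\|f\|_{\mathcal{B}_p^\beta}$ at critical exponent $1$, one with constant $\|f\|_{\mathcal{B}_p^{\beta+1}}$ at supercritical exponent $1+H$) into the critical SSL of \cite{athreya2020well}. Two points, however, need attention.

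\textbf{The missing \eqref{sts2} check.} The SSL requires not only the conditional-defect bound \eqref{sts1} but also the unconditioned bound \eqref{sts2} on $\|\delta A_{s,u,t}\|$, which you do not discuss. In part~\ref{item3.5(a)} this is harmless: since $1+H(\beta-d/p)>\tfrac{1}{2}$, the germ estimate and the triangle inequality on $A_{s,t}-A_{s,u}-A_{u,t}$ give \eqref{sts2} for free (this is the paper's argument for \ref{en:(2a)}). In part~\ref{item3.5(b)} the germ exponent is exactly $\tfrac{1}{2}$, so the triangle inequality is useless. The paper recovers an $\varepsilon_2>0$ by a \emph{partial} shift: use Lemma~\ref{lembesov-spaces}$(ii)$ with $\alpha=\varepsilon$ to get $\|f(\cdot+\psi_s)-f(\cdot+\psi_u)\|_{\mathcal{B}_p^{\beta-\varepsilon}}\leq C|\psi_s-\psi_u|^\varepsilon\|f\|_{\mathcal{B}_p^\beta}$, then apply Lemma~\ref{lem1streg} at level $\beta-\varepsilon$ and the $(\tfrac{1}{2}+H)$-regularity of $\psi$, which yields $\|\delta A_{s,u,t}\|_{L^m}\lesssim \|f\|_{\mathcal{B}_p^\beta}(1+[\psi])\,(t-s)^{1/2+\varepsilon/2}$. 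Your sketch does not supply this step.

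\textbf{The choice of germ.} You take $A_{s,t}=\mathbb{E}^s\!\int_s^t f(B_r+\psi_s)\,dr$, whereas the paper takes $A_{s,t}=\int_s^t f(B_r+\psi_s)\,dr$ without the outer $\mathbb{E}^s$. Both give the same $\mathbb{E}^s\delta A_{s,u,t}$, but with your choice
\[
\delta A_{s,u,t}=(\mathbb{E}^s-\mathbb{E}^u)\!\int_u^t f(B_r+\psi_s)\,dr \;+\;\mathbb{E}^u\!\int_u^t\bigl[f(B_r+\psi_s)-f(B_r+\psi_u)\bigr]\,dr.
\]
The second term admits the partial-shift estimate above; the first, however, is a martingale-increment-type term whose $L^m$ norm is only $\lesssim\|f\|_{\mathcal{B}_p^\beta}(t-u)^{1/2}$ in the limit case and cannot be improved. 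So your germ actively blocks the verification of \eqref{sts2} in part~\ref{item3.5(b)}. Dropping the outer $\mathbb{E}^s$ (as the paper does) makes $\delta A_{s,u,t}$ equal to the clean difference $\int_u^t[f(B_r+\psi_s)-f(B_r+\psi_u)]\,dr$, to which the $\varepsilon$-shift applies directly.
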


Before giving the next key estimates of this section, we need a corollary of Lemma~\ref{lem1streg}.
\begin{corollary} \label{cor4inc}
Let $\beta \in (-1/(2H),0)$. Let $m \in [2, \infty]$ and $p \in [m,+\infty)$ such that $\beta-d/p \in (-1/H,0)$.
 Let $\lambda, \lambda_1, \lambda_2 \in (0,1]$ and assume that $\beta>-1/(2H)+\lambda$ and $\beta>-1/(2H)+\lambda_1+\lambda_2$. There exists a constant $C>0$
 such that for any $f \in \mathcal{C}_b^\infty(\R^d,\R^d) \cap \mathcal{B}_p^\beta$, any $0\leq s \leq u \leq t \leq 1$, any $\mathcal{F}_s$-measurable random variables $\kappa_1,\kappa_2 \in L^m$ and any $\mathcal{F}_u$-measurable random variables $\kappa_3, \kappa_4 \in L^m$, there is
\begin{align*}
    \Big\|\int_u^t &\left(f(B_r+\kappa_1)-f(B_r+\kappa_2)-f(B_r+\kappa_3)+f(B_r+\kappa_4) \right)dr \Big\|_{L^m} \nonumber\\
    &\leq C \|f\|_{\mathcal{B}_p^\beta}\, \|\EE^s[|\kappa_1-\kappa_3|^m]^{1/m}\|_{L^\infty}^{\lambda_2}\, \|\kappa_1-\kappa_2\|_{L^m}^{\lambda_1}\, (t-u)^{1+H(\beta-\lambda_1-\lambda_2-\frac{d}{p})} \\
    &\quad + C\|f\|_{\mathcal{B}_p^\beta}\, \|\kappa_1-\kappa_2 - \kappa_3 +\kappa_4\|_{L^m}^\lambda\,  (t-u)^{1+H(\beta-\lambda-\frac{d}{p})}.
\end{align*}
\end{corollary}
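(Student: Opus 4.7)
The plan is to split the four-term integrand into a fully-telescoped ``rectangle'' piece, amenable to Lemma \ref{lembesov-spaces}(iii), and a simpler single difference handled by Lemma \ref{lembesov-spaces}(ii). Introducing the auxiliary $\mathcal{F}_u$-measurable variable $\kappa_2+\kappa_3-\kappa_1$, I would write
\begin{equation*}
f(y+\kappa_1)-f(y+\kappa_2)-f(y+\kappa_3)+f(y+\kappa_4) = F_1(y,\omega) + F_2(y,\omega),
\end{equation*}
with $F_1(y,\omega) \colon= f(y+\kappa_1)-f(y+\kappa_2)-f(y+\kappa_3)+f(y+\kappa_2+\kappa_3-\kappa_1)$ and $F_2(y,\omega) \colon= f(y+\kappa_4)-f(y+\kappa_2+\kappa_3-\kappa_1)$. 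Lemma \ref{lembesov-spaces}(iii) with $(\alpha_1,\alpha_2)=(\lambda_1,\lambda_2)$ applied to $F_1$ and Lemma \ref{lembesov-spaces}(ii) with $\alpha=\lambda$ applied to $F_2$ deliver the pathwise Besov bounds
\begin{equation*}
\|F_1(\cdot,\omega)\|_{\mathcal{B}_p^{\beta-\lambda_1-\lambda_2}} \leq C\,\|f\|_{\mathcal{B}_p^\beta}\,|\kappa_1-\kappa_2|^{\lambda_1}|\kappa_1-\kappa_3|^{\lambda_2},
\end{equation*}
\begin{equation*}
\|F_2(\cdot,\omega)\|_{\mathcal{B}_p^{\beta-\lambda}} \leq C\,\|f\|_{\mathcal{B}_p^\beta}\,|\kappa_1-\kappa_2-\kappa_3+\kappa_4|^{\lambda}.
\end{equation*}

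Next I would invoke Lemma \ref{lem1streg}: all four $\kappa_i$ are $\mathcal{F}_u$-measurable, so the lemma applies to $F_1$ and $F_2$ (viewed as random functions via the $\Xi$ formalism) on the interval $[u,t]$ with starting time $S=u$ and $q=m$. The hypotheses $\beta > -1/(2H)+\lambda_1+\lambda_2$ and $\beta > -1/(2H)+\lambda$ are precisely what keep $\beta-\lambda_1-\lambda_2$ and $\beta-\lambda$ inside the admissible range $(-1/(2H),0)$. This produces
\begin{equation*}
\Big\|\int_u^t F_1(B_r,\omega)\,dr\Big\|_{L^m} \leq C\,\big\|\|F_1(\cdot,\omega)\|_{\mathcal{B}_p^{\beta-\lambda_1-\lambda_2}}\big\|_{L^m}\,(t-u)^{1+H(\beta-\lambda_1-\lambda_2-d/p)},
\end{equation*}
together with the analogous bound for $F_2$ with exponent $1+H(\beta-\lambda-d/p)$.

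Finally I would convert the $L^m$-norm of the pathwise Besov bounds into the stated mixed norms. For $F_1$, conditioning first on $\mathcal{F}_s$ pulls out the $\mathcal{F}_s$-measurable factor $|\kappa_1-\kappa_2|^{\lambda_1 m}$; conditional Jensen applied to the concave map $x \mapsto x^{\lambda_2}$ on $\mathbb{R}_+$ then yields $\EE^s[|\kappa_1-\kappa_3|^{\lambda_2 m}] \leq (\EE^s|\kappa_1-\kappa_3|^m)^{\lambda_2}$, which is in turn dominated by $\|\EE^s[|\kappa_1-\kappa_3|^m]^{1/m}\|_{L^\infty}^{\lambda_2 m}$; a further unconditional Jensen step $\EE|\kappa_1-\kappa_2|^{\lambda_1 m} \leq \|\kappa_1-\kappa_2\|_{L^m}^{\lambda_1 m}$ then produces the first announced term. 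For $F_2$, the single elementary inequality $\||\kappa_1-\kappa_2-\kappa_3+\kappa_4|^\lambda\|_{L^m} \leq \|\kappa_1-\kappa_2-\kappa_3+\kappa_4\|_{L^m}^\lambda$ (again Jensen, since $\lambda\leq 1$) produces the second. I expect the only real subtlety to be the initial algebraic splitting that simultaneously exhibits a true second-order finite difference (so Lemma \ref{lembesov-spaces}(iii) can deliver the product $|\kappa_1-\kappa_2|^{\lambda_1}|\kappa_1-\kappa_3|^{\lambda_2}$) and leaves a residual controlled by the single quantity $|\kappa_1-\kappa_2-\kappa_3+\kappa_4|^\lambda$; once $F_1$ and $F_2$ are isolated, Lemma \ref{lem1streg} and two applications of Jensen's inequality close the estimate.
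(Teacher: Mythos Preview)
Your proof is correct and follows essentially the same route as the paper: the paper states that the argument is identical to Corollary~D.4 of \cite{anzeletti2021regularisation} and relies on Lemma~\ref{lem1streg} together with Lemma~\ref{lembesov-spaces}$(iii)$, which is precisely your decomposition into $F_1$ (handled via the second-order difference estimate) and $F_2$ (handled via the first-order estimate), followed by Lemma~\ref{lem1streg} on $[u,t]$ with $\Xi=(\kappa_1,\kappa_2,\kappa_3,\kappa_4)$ and the Jensen steps you describe.
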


\begin{proof}
The proof is identical to the one-dimensional version of this result, see Corollary C.5 of \cite{anzeletti2021regularisation}, so we do not repeat it. It relies on Lemma~\ref{lem1streg} and Lemma~\ref{lembesov-spaces}$(iii)$.
\end{proof}

The following proposition shows a regularisation property of the $d$-dimensional fBm on increments of the form $\int_0^t f(\psi_r+B_r)-f(\phi_r+B_r) \, dr$.
\begin{proposition}\label{propbound-E1-SDE}
Let $(\psi_t)_{t\in[0,1]}, \, (\phi_t)_{t\in[0,1]}$ be two $\mathbb{R}^d$-valued stochastic processes adapted to $\mathbb{F}$. Let $f \in \mathcal{C}^\infty_{b}(\mathbb{R}^d,\mathbb{R}^d) \cap \mathcal{B}_\infty^\gamma$ and $m \in [2,\infty)$.
Assume that $\gamma \in( 1-\frac{1}{2H},0)$ and let $\tau \in (0,1)$ such that
\begin{align}\label{eqtau-cond}
\left( \tau\wedge \frac{1}{2}  \right)+ H \left( \gamma-1 \right) > 0 .
\end{align}
There exists a constant $C := C(m,\gamma,d) >0$ such that for any $0 \leq S < T \leq 1$ and $(s, t)\in \Delta_{S,T}$, %
\begin{equation}\label{eqssl-o-on-2-SDE}
\begin{split}
& \Big\| \int_s^t  f(\psi_r+ B_r) - f(\phi_r+  B_r)  \diff r \Big\|_{L^{m}}   \\ 
& \quad \leq C\,  \| f \|_{\mathcal{B}^{\gamma}_{\infty}} \Big( 1 + [\psi]_{\mathcal{C}^{\frac{1}{2}+H}_{[S,T]} L^{m,\infty}} \Big)  \left( [\psi-\phi]_{\mathcal{C}^{\tau}_{[S,T]} L^{m}} + \|\psi_S-\phi_S\|_{L^m} \right)(t-s)^{1+H(\gamma-1)} .
\end{split}
\end{equation}
\end{proposition}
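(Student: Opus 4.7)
\emph{Plan.} I would apply L\^e's stochastic sewing lemma \cite{le2020stochastic} to the germ
\begin{equation*}
A_{s,t}:=\mathbb{E}^s\int_s^t\bigl[f(\psi_s+B_r)-f(\phi_s+B_r)\bigr]\,dr,
\end{equation*}
which freezes $\psi,\phi$ at the left endpoint, and identify the resulting sewn process with the true integral $I_{s,t}:=\int_s^t[f(\psi_r+B_r)-f(\phi_r+B_r)]\,dr$ (well defined since $f\in\mathcal{C}_b^\infty$). Following the remark at the start of Section~\ref{secproofs-SDE}, I may without loss of generality replace $p$ by a larger value via Besov embedding so that $\gamma<0$ and $\gamma>1-\tfrac{1}{2H}$, the quantity $\gamma-d/p$ being invariant under this embedding; this ensures that Corollary~\ref{cor4inc} with $\beta=\gamma$ and Lemma~\ref{lemreg-B}(ii) with $\beta=\gamma-1$ both apply in the form I need.

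For the first sewing hypothesis, I would view $g(x):=f(x+\psi_s)-f(x+\phi_s)$ as an $\omega$-dependent Besov function and invoke Lemma~\ref{lembesov-spaces}(ii) to obtain $\|g\|_{\mathcal{B}_p^{\gamma-1}}\leq C|\psi_s-\phi_s|\,\|f\|_{\mathcal{B}_p^\gamma}$ almost surely. Applying Lemma~\ref{lemreg-B}(ii) conditionally at time $s$ and integrating over $r\in(s,t)$ then yields
\begin{equation*}
\|A_{s,t}\|_{L^m}\leq C\,\|f\|_{\mathcal{B}_p^\gamma}\,\|\psi_s-\phi_s\|_{L^m}\,(t-s)^{1+H(\gamma-1-d/p)},
\end{equation*}
whose exponent $\alpha_1:=1+H(\gamma-1-d/p)$ exceeds $1/2$ precisely under the standing assumption $\gamma-d/p>1-\tfrac{1}{2H}$. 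Bounding $\|\psi_s-\phi_s\|_{L^m}\leq\|\psi_S-\phi_S\|_{L^m}+[\psi-\phi]_{\mathcal{C}^\tau_{[S,T]}L^m}$ (via $(s-S)^\tau\leq 1$) already produces the correct right-hand side structure of the proposition, at exactly the rate $(t-s)^{1+H(\gamma-1-d/p)}$.

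For the second sewing hypothesis, direct computation identifies
\begin{equation*}
\mathbb{E}^s\delta A_{s,u,t}=\mathbb{E}^s\int_u^t\bigl[f(B_r+\psi_s)-f(B_r+\phi_s)-f(B_r+\psi_u)+f(B_r+\phi_u)\bigr]\,dr,
\end{equation*}
precisely the four-term expression to which Corollary~\ref{cor4inc} applies, with $\kappa_1=\psi_s$, $\kappa_2=\phi_s$, $\kappa_3=\psi_u$, $\kappa_4=\phi_u$ (the required measurability holding since $s<u$). Taking $\lambda=1$ in Corollary~\ref{cor4inc} is legitimate after the embedding because $\gamma>1-\tfrac{1}{2H}$ is exactly the condition $\beta>-\tfrac{1}{2H}+\lambda$; the second summand of Corollary~\ref{cor4inc} then contributes, through $\|(\psi-\phi)_s-(\psi-\phi)_u\|_{L^m}\leq (u-s)^\tau[\psi-\phi]_{\mathcal{C}^\tau L^m}$, a bound with exponent $1+\tau+H(\gamma-1-d/p)$ strictly greater than $1$ thanks to the hypothesis $\tau\wedge\tfrac12+H(\gamma-1-d/p)>0$. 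For the first summand I would combine the Hölder-type bound $\|\mathbb{E}^s[|\psi_s-\psi_u|^m]^{1/m}\|_{L^\infty}\leq(u-s)^{1/2+H}[\psi]_{\mathcal{C}^{1/2+H}L^{m,\infty}}$ with $(u-s)^a(t-u)^b\leq(t-s)^{a+b}$ to reach total exponent $1+\lambda_2/2+H(\gamma-\lambda_1-d/p)$, and pick $\lambda_1,\lambda_2\in(0,1]$ compatible with Corollary~\ref{cor4inc}'s constraint $\lambda_1+\lambda_2<\gamma+\tfrac{1}{2H}$ and balanced so this exponent exceeds $1$, finally absorbing $[\psi]^{\lambda_2}\leq 1+[\psi]$ into the desired structure.

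Both sewing hypotheses verified, Lemma~\ref{lemSSL} produces a process $\mathcal{A}$ satisfying $\|\mathcal{A}_t-\mathcal{A}_s-A_{s,t}\|_{L^m}\leq C(t-s)^{\alpha_2}$ for some $\alpha_2>1$. I would identify $\mathcal{A}_t-\mathcal{A}_s=I_{s,t}$ via the uniqueness clause after checking $\|I_{s,t}-A_{s,t}\|_{L^m}=o((t-s)^{1/2})$ and $\|\mathbb{E}^s(I_{s,t}-A_{s,t})\|_{L^m}=o(t-s)$ by a standard Taylor-expansion argument using the smoothness of $f$ and the Hölder regularity implied by the hypotheses on $\psi$ and $\psi-\phi$. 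Since $\alpha_2>1>\alpha_1$ and $(t-s)\leq 1$, the $(t-s)^{\alpha_2}$ contribution absorbs into $(t-s)^{\alpha_1}$, yielding the claimed estimate. The main obstacle I anticipate is the parameter juggling in Corollary~\ref{cor4inc}: simultaneously respecting its regularity constraint, producing a first-summand exponent strictly greater than $1$, and retaining the linear coefficient structure $(1+[\psi]_{\mathcal{C}^{1/2+H}L^{m,\infty}})(\|\psi_S-\phi_S\|_{L^m}+[\psi-\phi]_{\mathcal{C}^\tau L^m})$ with no extraneous additive term — a balance that depends crucially on the strict inequality $\gamma-d/p>1-\tfrac{1}{2H}$ and on a careful reduction of $p$ and $\gamma$ via Besov embedding before applying the four-term estimate.
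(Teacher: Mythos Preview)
Your overall plan---stochastic sewing with the ``frozen'' germ $A_{s,t}$, the bound on $\|A_{s,t}\|_{L^m}$ via Lemma~\ref{lembesov-spaces}(ii) and Lemma~\ref{lemreg-B}(ii), and identification of the limit---is exactly the paper's strategy. The bound you obtain for $\|A_{s,t}\|_{L^m}$ is the same as \eqref{eqAst-}, and using it (via the triangle inequality on $\delta A$) to verify the $>\tfrac12$ sewing condition \eqref{sts2} is fine, if slightly coarser than the paper's route.

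The gap is in your treatment of the $>1$ condition \eqref{sts1}. You propose to bound $\|\EE^s\delta A_{s,u,t}\|_{L^m}$ via Corollary~\ref{cor4inc}, but this corollary is itself a stochastic-sewing output and only delivers exponents of the form $1+\tfrac{\lambda_2}{2}+H(\gamma-\lambda_1-d/p)$ in the first summand. To keep the linear factor $\|\psi_s-\phi_s\|_{L^m}$ (and hence the multiplicative structure $(\|\psi_S-\phi_S\|_{L^m}+[\psi-\phi]_{\mathcal{C}^\tau L^m})$ with no additive constant) you must take $\lambda_1=1$; then the exponent exceeds $1$ only if $\lambda_2>2H(1-\gamma+d/p)$, while Corollary~\ref{cor4inc}'s constraint forces $\lambda_2<\gamma+1/(2H)-1$ after the embedding. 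This window is \emph{empty} whenever $\gamma-d/p$ is close enough to the threshold $1-1/(2H)$: for instance with $H=1/4$, $p=\infty$, $\gamma=-1/2$ (so $\gamma-d/p=-1/2>1-1/(2H)=-1$), you would need $\lambda_2\in(3/4,1/2)$. Taking $\lambda_1<1$ does not help, since then $\|\psi_s-\phi_s\|_{L^m}^{\lambda_1}$ cannot be absorbed into $\|\psi_S-\phi_S\|_{L^m}+[\psi-\phi]_{\mathcal{C}^\tau L^m}$ without generating the ``extraneous additive term'' you correctly flagged.

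The paper therefore swaps the roles of the two tools relative to your plan. Corollary~\ref{cor4inc} (with $\lambda_1=1$, $\lambda_2=\varepsilon$ small) is used for \eqref{sts2}, where exponent $>\tfrac12$ suffices and is automatic. For \eqref{sts1} the paper bypasses Corollary~\ref{cor4inc} entirely: it writes the four-term integrand as $F+\tilde F$ with
\[
F(\cdot)=f(\psi_s+\cdot)-f(\phi_s+\cdot)-f(\psi_u+\cdot)+f(\psi_u+\phi_s-\psi_s+\cdot),\qquad
\tilde F(\cdot)=f(\phi_u+\cdot)-f(\psi_u+\phi_s-\psi_s+\cdot),
\]
applies Lemma~\ref{lembesov-spaces}(iii) and (ii) to get $\|F\|_{\mathcal{B}_p^{\gamma-2}}\leq C|\psi_s-\phi_s|\,|\psi_s-\psi_u|\,\|f\|_{\mathcal{B}_p^\gamma}$ and $\|\tilde F\|_{\mathcal{B}_p^{\gamma-1}}\leq C|(\psi-\phi)_s-(\psi-\phi)_u|\,\|f\|_{\mathcal{B}_p^\gamma}$, and then integrates the \emph{pointwise} heat-kernel bound Lemma~\ref{lemreg-B}(ii) in $r$. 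This produces exponents $\tfrac32+H(\gamma-1-d/p)$ and $1+\tau+H(\gamma-1-d/p)$, both exceeding $1$ precisely under the hypothesis \eqref{eqtau-cond}, with the correct linear dependence on $|\psi_s-\phi_s|$ and on $[\psi-\phi]_{\mathcal{C}^\tau L^m}$ respectively. The key point is that conditioning on $\mathcal{F}_u$ and using Lemma~\ref{lemreg-B}(ii) buys you a full integral in $r$ (one extra power of $t-u$) rather than the ``half'' that Corollary~\ref{cor4inc} provides, and this is exactly what the $>1$ condition needs.
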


\begin{remark}
Observe that the roles of $\psi$ and $\phi$ are not symmetric, since one needs ${[\psi]_{\mathcal{C}^{1/2+H}_{[S,T]} L^{m,\infty}}<+\infty}$ for the bound~\eqref{eqssl-o-on-2-SDE} to be useful, while one only needs $\phi$ to be in $\mathcal{C}^{\tau}_{[0,1]} L^{m}$. In Section~\ref{secoverview-SDE}, $\psi$ is replaced by the Euler scheme, which is shown to have finite $\mathcal{C}^{1/2+H}_{[0,1]} L^{m,\infty}$-seminorm uniformly in $h$ and $k$, and $\phi$ is replaced by any weak solution $X$ which satisfies~\eqref{eq:weakerreg}.
\end{remark}

\begin{proof}
Let $0\leq S<T\leq1$.
For $(s,t)\in \Delta_{S,T}$, let
\begin{align} \label{eqProp51A}
A_{s,t} = \int_{s}^{t}  f(\psi_{s} + B_r) - f(\phi_{s}+ B_r) \, dr  ~~\mbox{and}~~
\mathcal{A}_{t} = \int_S^{t}  f(\psi_r + B_r) - f(\phi_r+ B_r)  \, dr  .
\end{align}
Assume without any loss of generality that $[\psi]_{\mathcal{C}^{1/2+H}_{[S,T]} L^{m,\infty}}$, $[\psi-\phi]_{\mathcal{C}^{\tau}_{[S,T]} L^{m}}$ and $\|\psi_S-\phi_S\|_{L^m}$ are finite, otherwise the result is trivial. 

Let $\varepsilon \in (0,\gamma-(1-1/(2H)))$. In the following, we check the conditions in order to apply Lemma~\ref{lemSSL} (with $q=m$). To show that \eqref{sts1} and \eqref{sts2} hold true with $\varepsilon_1=  \tau \wedge \frac{1}{2} + H(\gamma - 1) {>0}$, $\alpha_1=0$ and $\varepsilon_2=1/2+H(\gamma-1)+\varepsilon/2>0$, $\alpha_2=0$, we prove that there exists a constant $C>0$ independent of $s,t,S$ and $T$ such that for $u = (s+t)/2$,
\begin{enumerate}[label=(\roman*)]
\item \label{item51(1)} 
$\|\EE^{s} [\delta A_{{s},u,{t}}]\|_{L^m}\leq C\, \|f \|_{\mathcal{B}_\infty^\gamma} ( [\psi]_{\mathcal{C}^{1/2+H}_{[S,T]} L^{m,\infty}}  + 1 ) ( [ \psi-\phi]_{\mathcal{C}^{\tau}_{[S,T]} L^{m}} +\| \psi_{S}-\phi_{S} \|_{L^{m}}) (t-s)^{1+\varepsilon_1} $\text{;}

\item \label{item51(2)} 
$\| \delta A_{{s},u,{t}}\|_{L^m} \leq C\, \|f\|_{\mathcal{B}_\infty^\gamma}\,\Big(  [\psi]_{\mathcal{C}^{1/2+H}_{[S,T]}L^{m,\infty}} +1 \Big) ( [ \psi-\phi ]_{\mathcal{C}^{\tau}_{[S,T]} L^{m}} + \|\psi_S-\phi_S\|_{L^m} ) \,   (t-u)^{\frac{1}{2}+\varepsilon_2} $\text{;}

\item \label{item51(3)} If \ref{item51(1)} and \ref{item51(2)} are satisfied, \eqref{sts3} gives the 
convergence in probability of $\sum_{i=1}^{N_n-1} A_{t^n_i,t^n_{i+1}}$ along any sequence of 
partitions $\Pi_n=\{t_i^n\}_{i=1}^{N_n}$ of $[S,t]$ with mesh converging to $0$. We will prove 
that the limit is the process $\mathcal{A}$ given in \eqref{eqProp51A}.
\end{enumerate}

Assume for now that \ref{item51(1)}, \ref{item51(2)} and \ref{item51(3)} hold. Applying Lemma~\ref{lemSSL} and recalling \eqref{eqbounds-nu}, we obtain that
\begin{equation}\label{eqssl-goal}
\begin{split}
    \Big\| \int_{s}^{t} & f(B_r+\psi_r) - f(B_r+\phi_r) \, dr \Big\|_{L^m} \\
   &\leq \| A_{{s},{t}}\|_{L^m}+ C\, \|f\|_{\mathcal{B}_\infty^\gamma} ([\psi]_{\mathcal{C}^{1/2+H}_{[S,T]}L^{m,\infty}} +1) ([\psi-\phi ]_{\mathcal{C}^{\tau}_{[S,T]} L^{m}} + \|\psi_S-\phi_S\|_{L^m} ) (t-s)^{1+H(\gamma-1)+ \tau \wedge \frac{1}{2}}  \\
    &\quad +C\, \|f \|_{\mathcal{B}_\infty^\gamma} ( [\psi]_{\mathcal{C}^{1/2+H}_{[S,T]}L^{m,\infty}}  + 1 )  ([ \psi-\phi]_{\mathcal{C}^{\tau}_{[S,T]} L^{m}} +\| \psi_{S}-\phi_{S} \|_{L^{m}}) (t-s)^{1 + H(\gamma-1)+\frac{\varepsilon}{2}} .
\end{split}
\end{equation}
To bound $\| A_{{s},{t}}\|_{L^m}$, we apply Lemma~\ref{lem1streg} with $q=m$ and $\beta=\gamma-1$, and for $\Xi = (\psi_{s},\phi_{s})$.
As $f$ is smooth and bounded, the first assumption of Lemma~\ref{lem1streg} is verified. By Lemma~\ref{lembesov-spaces}$(i)$, $ \|f(\cdot + \psi_{s})-f(\cdot + \phi_{s})\|_{\mathcal{B}^{\gamma-1}_{\infty}} \leq 2\| f\|_{\mathcal{B}^{\gamma-1}_{\infty}}$, hence the second assumption of Lemma~\ref{lem1streg} is verified. It follows by Lemma~\ref{lem1streg} that 
\begin{align}\label{eqAst-}
\| A_{s,t}\|_{L^m} &\leq C\, \| \|f(\psi_s+\cdot)-f(\phi_s+\cdot)  \|_{\mathcal{B}_\infty^{\gamma-1}} \|_{L^m} \, (t-s)^{1+H(\gamma-1)}\ \nonumber\\
&\leq C\, \|f\|_{\mathcal{B}^{\gamma}_\infty} \, \| \psi_{s}-\phi_{s} \|_{L^{m}} \, (t-s)^{1+H(\gamma-1)}\nonumber\\
&\leq  C\, \|f\|_{\mathcal{B}_\infty^\gamma} ( [ \psi-\phi ]_{\mathcal{C}^{\tau}_{[S,T]} L^{m}} + \|\psi_S-\phi_S\|_{L^m} ) \, (t-s)^{1+H(\gamma-1)}.
\end{align}
Injecting the previous bound in \eqref{eqssl-goal}, we get \eqref{eqssl-o-on-2-SDE}.

We now check that the conditions \ref{item51(1)}, \ref{item51(2)} and \ref{item51(3)} actually hold.

\smallskip

Proof of \ref{item51(1)}:  For $u \in [s,t]$, we have $\delta A_{s,u,t} = \int_u^{t}   f(\psi_{s} +B_r) - f(\phi_{s}+ B_r) -  f(\psi_u+ B_r) +f(\phi_u + B_r) \, d r$.
By the tower property of conditional expectation and Fubini's theorem, we have
\begin{align}\label{eqdeltaAst-decomp-SDE}
\mathbb{E}^{s} \delta A_{s,u,t} & =  \int_u^{t} \mathbb{E}^{s}\, \mathbb{E}^u \Big[ f(\psi_{s} +B_r) - f(\phi_{s}+ B_r) - f(\psi_u + B_r) + f(\phi_u+ B_r) \Big] \, dr  \nonumber \\ 
 & =\colon\int_u^{t} \mathbb{E}^{s}\, \mathbb{E}^u [ F(B_r,s,u) + \tilde{F}(B_r,s,u) ] \diff r  ,
\end{align}
where 
\begin{align*}
F(\cdot,s,u) & = f(\psi_{s} + \cdot)-f(\phi_{s} +  \cdot) - f(\psi_u+\cdot)  + f(\psi_u + \phi_{s} - \psi_{s} + \cdot), \\
\tilde{F}(\cdot,s,u)& =  f(\phi_u+\cdot) - f(\psi_u+ \phi_{s} - \psi_{s} + \cdot).
\end{align*}
By Lemma \ref{lemreg-B}$(ii)$, we have that
\begin{align*}
| \mathbb{E}^u F(B_r,s,u) | \leq \|  F(\cdot,s,u) \|_{\mathcal{B}_\infty^{\gamma-2}} \,  (r-u)^{H(\gamma-2)} ,\\ 
| \mathbb{E}^u \tilde{F}(B_r,s,u) | \leq  \|\tilde{F}(\cdot,s,u) \|_{\mathcal{B}_\infty^{\gamma-1}} \, (r-u)^{H(\gamma-1)}  .
\end{align*}
Moreover, by Lemma \ref{lembesov-spaces}$(iii)$, it comes that 
\begin{align*}
\EE^s \| F(\cdot,s,u) \|_{\mathcal{B}_\infty^{\gamma-2}} &\leq  \|f \|_{\mathcal{B}_\infty^\gamma}\, | \psi_{s}-\phi_{s} |\, \EE^s| \psi_{s}-\psi_u |\\
&\leq \|f \|_{\mathcal{B}_\infty^\gamma}\, | \psi_{s}-\phi_{s} |\, \left(\EE^s| \psi_{s}-\psi_u |^m\right)^{\frac{1}{m}}\\
&\leq \|f \|_{\mathcal{B}_\infty^\gamma}\, | \psi_{s}-\phi_{s} |\, [\psi]_{\mathcal{C}^{\frac{1}{2}+H}_{[S,T]} L^{m,\infty}} (u-s)^{\frac{1}{2}+H}  .
\end{align*}
Besides,
\begin{align*}
\| \| \tilde{F}(\cdot,s,u) \|_{\mathcal{B}_\infty^{\gamma-1}} \|_{L^{m}} & \leq \|f \|_{\mathcal{B}_\infty^\gamma} \| \psi_{s}-\psi_u -\phi_{s} + \phi_u \|_{L^{m}} \\
& \leq  \|f \|_{\mathcal{B}_\infty^\gamma} \, [ \psi-\phi ]_{\mathcal{C}^{\tau}_{[S,T]} L^{m}}\, (u-s)^{\tau}  .
\end{align*}
Plugging the previous bounds in \eqref{eqdeltaAst-decomp-SDE} and using $\| \psi_{s}-\phi_{s} \|_{L^m} \leq \| \psi_{S}-\phi_{S} \|_{L^m} + (T-S)^\tau [ \psi-\phi]_{\mathcal{C}^{\tau}_{[S,T]} L^{m}}$, we obtain
\begin{align*}
\| \mathbb{E}^{s} \delta A_{s,u,t} \|_{L^{m}} & \leq C\, \|f \|_{\mathcal{B}_\infty^\gamma}  \Big(  [\psi]_{\mathcal{C}^{\frac{1}{2}+H}_{[S,T]} L^{m,\infty}}  + 1 \Big) ( [ \psi-\phi]_{\mathcal{C}^{\tau}_{[S,T]} L^{m}} +\| \psi_{S}-\phi_{S} \|_{L^{m}} )\\
&\quad \times \left( (t-s)^{1 + \tau + H(\gamma- 1)}  + (t-s)^{1 + \frac{1}{2} + H + H(\gamma - 2)}  \right)  \\
& \leq  C\, \|f \|_{\mathcal{B}_\infty^\gamma} \, \Big(  [\psi]_{\mathcal{C}^{\frac{1}{2}+H}_{[S,T]} L^{m,\infty}}  + 1 \Big)  ([ \psi-\phi]_{\mathcal{C}^{\tau}_{[S,T]} L^{m}} +\| \psi_{S}-\phi_{S} \|_{L^{m}} )(t-s)^{1 + \tau \wedge \frac{1}{2} + H(\gamma - 1)} .
\end{align*}

\smallskip

Proof of \ref{item51(2)}: Apply Corollary~\ref{cor4inc} with $p=+\infty$, $\beta=\gamma$, $\lambda=1$, $\lambda_{1}=1$, $\lambda_{2}=\varepsilon$, $\kappa_{1}=\psi_{s}$, $\kappa_{2}=\phi_{s}$, $\kappa_{3}=\psi_{u}$ and $\kappa_{4}=\phi_{u}$. This yields
\begin{align*}
\| \delta A_{s,u,t} \|_{L^m} 
&\leq C \|f\|_{\mathcal{B}_\infty^\gamma}\, \|\EE^s[|\psi_{s}-\psi_{u}|^m]^{1/m}\|_{L^\infty}^{\varepsilon}\, \|\psi_{s}-\phi_{s}\|_{L^m}\, (t-u)^{1+H(\gamma-1-\varepsilon)} \\
    &\quad + C\|f\|_{\mathcal{B}_\infty^\gamma}\, \|\psi_{s}-\phi_{s} - \psi_{u} +\phi_{u}\|_{L^m}\,  (t-u)^{1+H(\gamma-1)}.
\end{align*}
Hence we get from \eqref{eqdefbracket} that 
\begin{align*}
\| \delta A_{s,u,t} \|_{L^m} 
&\leq C \|f\|_{\mathcal{B}_\infty^\gamma}\, [\psi]_{\mathcal{C}^{\frac{1}{2}+H}_{[S,T]}L^{m,\infty}}^{\varepsilon}\, \|\psi_{s}-\phi_{s}\|_{L^m}\, (t-u)^{1+H(\gamma-1)+\frac{\varepsilon}{2}} \\
    &\quad + C\|f\|_{\mathcal{B}_\infty^\gamma}\, [\psi-\phi]_{\mathcal{C}^\tau_{[S,T]}L^m}\,  (t-u)^{1+H(\gamma-1)+\tau},
\end{align*} 
and use $[\psi]_{\mathcal{C}^{1/2+H}_{[S,T]} L^{m,\infty}}^\varepsilon \leq [\psi]_{\mathcal{C}^{1/2+H}_{[S,T]} L^{m,\infty}}+1$ and $\|\psi_{s}-\phi_{s}\|_{L^m} \leq \|\psi_{S}-\phi_{S}\|_{L^m} + [\psi-\phi]_{\mathcal{C}^\tau_{[S,T]}L^m}$ to prove \ref{item51(2)}.

\smallskip

Proof of \ref{item51(3)}: Finally, for a sequence $(\Pi_n)_{n \in 
\mathbb{N}}$ of partitions of $[S,t]$ with $\Pi_n=\{t_i^n\}_{i=1}^{N_n}$ and mesh size $|\Pi_n|$
converging to zero, we have
\begin{align*}
\left\| \mathcal{A}_{t} - \sum_{i=1}^{N_{n}-1} A_{t_i^n,t_{i+1}^n} \right\|_{L^m}
& \leq \sum_{i=1}^{N_{n}-1} \int_{t_i^n}^{t_{i+1}^n} \left\| f(\psi_r +B_r) - f(\phi_r + B_r) - f(\psi_{t_i^n}+B_r) + f(\phi_{t_i^n}+B_r) \right\|_{L^m}  \, dr\\
& \leq \sum_{i=1}^{N_{n}-1} \int_{t_i^n}^{t_{i+1}^n} \left\| f(\psi_r +B_r)- f(\psi_{t_i^n}+B_r) \right\|_{L^m} + \left\| f(\phi_r + B_r) - f(\phi_{t_i^n}+B_r) \right\|_{L^m} dr\\
&\leq \|f\|_{\mathcal{C}^1} \sum_{i=1}^{N_{n}-1} \int_{t_i^n}^{t_{i+1}^n} \left\| \psi_r - \psi_{t_i^n}\right\|_{L^m} + \left\|\phi_r - \phi_{t_i^n}\right\|_{L^m} dr.
\end{align*}
Now use that $\| \psi_r - \psi_{t_i^n}\|_{L^m} \leq [\psi]_{\mathcal{C}^{1/2+H}_{[S,T]}L^m} |\Pi_n|^{1/2+H} \leq [\psi]_{\mathcal{C}^{1/2+H}_{[S,T]}L^{m,\infty}} |\Pi_n|^{1/2+H}$ and for $\sigma={\tau\wedge (1/2+H)}$, $\| \phi_r - \phi_{t_i^n}\|_{L^m} \leq C ([\phi-\psi]_{\mathcal{C}^{\tau}_{[S,T]}L^m} + [\psi]_{\mathcal{C}^{1/2+H}_{[S,T]}L^{m,\infty}} ) |\Pi_n|^{\sigma}$ to get
\begin{align*}
\left\| \mathcal{A}_{t} - \sum_{i=1}^{N_{n}-1} A_{t_i^n,t_{i+1}^n} \right\|_{L^m} \leq C\|f\|_{\mathcal{C}^1} (T-S) ([\phi-\psi]_{\mathcal{C}^{\tau}_{[S,T]}L^m} + [\psi]_{\mathcal{C}^{1/2+H}_{[S,T]}L^{m,\infty}} ) |\Pi_n|^{\sigma} \underset{n \to \infty}{\longrightarrow} 0.
\end{align*}
\end{proof}

%%%%%%%%%%%
%%%%%%%%%%%

\subsection{Regularisation properties of the $d$-dimensional discrete-time fBm}\label{subsecbound-E2}

In this section, the same quantities as in Section~\ref{secproofs-SDE} are considered, but with a discrete-time noise.

\begin{lemma}\label{lembound-Khn}

Recall that $\gamma$ satisfies \eqref{eqcond-gamma-p-H}. Let $m \in [2, \infty)$, $q \in [m,\infty]$.  There exists a constant $C>0$ such that for any $0 \leq S < T \leq 1$, any $\mathbb{R}^d$-valued $\mathcal{F}_S$-measurable random variable $\psi$, any $f \in \mathcal{C}^\infty_{b}(\mathbb{R}^d, \mathbb{R}^d) \cap \mathcal{B}_\infty^\gamma$, any $h>0$ and any
 $(s, t)\in \Delta_{S,T}$, we have
\begin{align*}%
\Big\| \Big( \EE^S \Big| \int_s^t  f(\psi + B_{r_h}) \diff r  \Big|^m \Big)^{\frac{1}{m}} \Big\|_{L^q} \nonumber \leq C\, \Big( \|f\|_\infty\,  h^{\frac{1}{2}-H} + \| f \|_{\mathcal{B}_\infty^\gamma} \Big) (t-s)^{\frac{1}{2}+H}  .
\end{align*}
\end{lemma}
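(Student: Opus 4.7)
The plan is to apply the stochastic sewing lemma (Lemma~\ref{lemSSL}) to the germ
\begin{equation*}
A_{s,t} := \mathbb{E}^s \int_s^t f(\psi + B_{r_h}) \, dr,
\end{equation*}
and exploit the fact that $t \mapsto \int_0^t f(\psi + B_{r_h}) dr$ is trivially additive, so sewing identifies the increment with $A$ plus an SSL remainder. Before starting, by the Besov embedding $\mathcal{B}_p^\gamma \hookrightarrow \mathcal{B}_{\tilde{p}}^{\gamma - d/p + d/\tilde{p}}$, I would enlarge $p$ so that $p \geq q$ and $\gamma < 0$ (both $\gamma - d/p$ and the right-hand side of the target estimate are invariant under this move), which allows the use of Lemma~\ref{lemreg-B}(ii)--(iv).

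To bound the germ in conditional $L^{m,q}$-norm I would split into two regimes. \textbf{If $t-s \leq h$}, the trivial estimate gives $|\int_s^t f(\psi + B_{r_h})\, dr| \leq \|f\|_\infty (t-s)$; writing $(t-s) = (t-s)^{1/2-H}(t-s)^{1/2+H}$ and using $(t-s)^{1/2-H} \leq h^{1/2-H}$ yields the full bound directly. \textbf{If $t-s > h$}, split $[s,t] = J_1 \cup J_2$ with $J_1 = \{r \in [s,t] : r_h \leq s\}$. On $J_1$ (of Lebesgue measure at most $h$), $B_{r_h}$ is $\mathcal{F}_s$-measurable and hence $|\mathbb{E}^s f(\psi + B_{r_h})| \leq \|f\|_\infty$, contributing $\leq \|f\|_\infty h \leq \|f\|_\infty h^{1/2-H} (t-s)^{1/2+H}$. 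On $J_2$, where $r_h > s$, Lemma~\ref{lemreg-B}(iv) combined with the local nondeterminism bounds \eqref{eqLND} and the translation invariance of the Besov norm (Lemma~\ref{lembesov-spaces}(i)) produces, for $p \geq q \geq m$, a pointwise estimate of the form
\begin{equation*}
\Big\|\big(\mathbb{E}^S|\mathbb{E}^s f(\psi + B_{r_h})|^m\big)^{1/m}\Big\|_{L^q} \leq C \|f\|_{\mathcal{B}_p^\gamma} (r_h - s)^{H\gamma} (r-s)^{-d/(2p)} (t-s)^{d(1-2H)/(2p)} .
\end{equation*}
After integrating over $J_2$, using $H(\gamma - d/p) \geq H(1-1/(2H)) = H - 1/2 > -1$ from (H2), the bound collapses to $C \|f\|_{\mathcal{B}_p^\gamma} (t-s)^{1 + H(\gamma - d/p)} \leq C\|f\|_{\mathcal{B}_p^\gamma} (t-s)^{1/2+H}$.

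For $\delta A_{s,u,t}$, an identical split together with the usual ``pivot'' decomposition (subtracting the shifted function $f(\psi + B_{r_h} - \mathbb{E}^u B_{r_h} + \mathbb{E}^s B_{r_h})$) produces a super-critical exponent $1/2 + H + \varepsilon$ for some $\varepsilon > 0$, using the strict inequality $\gamma > 1 - 1/(2H)$ from (H2) to extract the margin $\varepsilon$. Applying Lemma~\ref{lemSSL} then closes the argument. \textbf{The main obstacle} is the careful treatment of the boundary cells intersecting $s$ and $u$: because $r_h$ is piecewise constant, direct transfer of the continuous-time regularisation (Lemma~\ref{lem1streg}) incurs a boundary defect that must be absorbed into the $\|f\|_\infty h^{1/2-H}$ term, while simultaneously ensuring that on $J_2$ the integrable singularity $(r_h - s)^{H(\gamma - d/p)}$ matches, after integration, precisely the target exponent $1/2 + H$. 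It is the combination of the non-strict inequality $\gamma - d/p \geq 1 - 1/(2H)$ (guaranteeing integrability with the correct scaling) and the strict inequality $\gamma > 1 - 1/(2H)$ (producing the positive SSL exponent) that makes the statement work.
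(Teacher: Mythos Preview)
Your overall strategy --- stochastic sewing with the germ $A_{s,t} = \EE^s \int_s^t f(\psi + B_{r_h})\, dr$ and a case split on $t-s$ versus $h$ --- matches the paper's, but you miss the central simplification and this leaves a potential gap. Because $A_{s,t}$ is already defined as a conditional expectation, the tower property gives $\EE^s[\delta A_{s,u,t}] = 0$ identically, so condition \eqref{sts1} holds with $\Gamma_1 = 0$ and no pivot decomposition is needed at all. Your proposed pivot aims for an exponent $\tfrac{1}{2} + H + \varepsilon$ on $\delta A_{s,u,t}$; if you intend this for \eqref{sts1} it is insufficient (that condition requires an exponent strictly greater than $1$, and $\tfrac{1}{2}+H+\varepsilon<1$ for $H<\tfrac{1}{2}$), while if you intend it for \eqref{sts2} it is unnecessary, since the triangle inequality $\|\delta A_{s,u,t}\| \leq \|A_{s,t}\| + \|A_{s,u}\| + \|A_{u,t}\|$ together with the germ bound already yields $\varepsilon_2 = H > 0$. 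In particular the strict inequality $\gamma > 1 - \tfrac{1}{2H}$ from \eqref{eqcond-gamma-p-H} plays no role in this lemma; only $\gamma - \tfrac{d}{p} \geq 1 - \tfrac{1}{2H}$ is used, and only in the last step $(t-s)^{1+H(\gamma-d/p)} \leq (t-s)^{1/2+H}$.

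A smaller point: on your region $J_2$ you invoke Lemma~\ref{lemreg-B}(iv), which introduces an unnecessary singular weight. Since $\psi$ is $\mathcal{F}_S \subset \mathcal{F}_s$-measurable, one can condition directly at time $s$ via Lemma~\ref{lemreg-B}(ii) (together with Lemma~\ref{lembesov-spaces}(i)), obtaining the clean pointwise bound $|\EE^s f(\psi + B_{r_h})| \leq C \|f\|_{\mathcal{B}_p^\gamma} (r_h - s)^{H(\gamma - d/p)}$. The paper splits at $s+2h$ (rather than at the first grid point after $s$) so that $r_h - s \geq \tfrac{1}{2}(r-s)$ on $[s+2h,t]$, and the integral then collapses directly to $C\|f\|_{\mathcal{B}_p^\gamma}(t-s)^{1+H(\gamma-d/p)}$.
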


\begin{proof}
We will check the conditions in order to apply Lemma~\ref{lemSSL}. For $(s,t) \in \Delta_{S,T}$, let 
\begin{align*}
A_{s,t} = \mathbb{E}^s \int_s^t  f(\psi + B_{r_h}) \diff r  ~~\mbox{and}~~
\mathcal{A}_t = \int_S^t f(\psi+ B_{r_h}) \diff r .
\end{align*}
Let $u\in [s,t]$ and notice that $\mathbb{E}^s \delta A_{s,u,t}=0$, so \eqref{sts1} holds with $\Gamma_1=0$. We will prove that \eqref{sts2} holds with $\alpha_2=0$ and
$$ \Gamma_2 =  C \|f\|_\infty   h^{\frac{1}{2}-H} + C \| f \|_{\mathcal{B}_p^\gamma} .$$
\paragraph{The case $t-s\leq 2h$.} In this case we have
\begin{align}\label{eqAst<h-Khn}
|A_{s,t}| \leq \|f\|_\infty (t-s) \leq C  \|f\|_\infty h^{\frac{1}{2}-H} (t-s)^{\frac{1}{2}+H} .
\end{align}

\paragraph{The case $t-s>2h$.} Here we split $A_{s,t}$ in two
\begin{equation*}
A_{s,t} = \mathbb{E}^s \int_s^{s+2h}  f(\psi + B_{r_h}) \diff r + \mathbb{E}^s \int_{s+2h}^t  f(\psi + B_{r_h}) \diff r .
\end{equation*}
For the first part, we obtain 
\begin{equation*}
\Big|\mathbb{E}^s \int_s^{s+2h} f(\psi + B_{r_h}) \diff r \Big| \leq 2h\, \|f\|_\infty \leq C\, \|f\|_\infty h^{\frac{1}{2}-H} (t-s)^{\frac{1}{2}+H}.
\end{equation*}
Denote the second part by
\begin{align*}
J \colon= \int_{s+2h}^t \mathbb{E}^s f(\psi+ B_{r_h})  \diff r .
\end{align*}
Using Lemma \ref{lemreg-B}$(ii)$ and Lemma~\ref{lembesov-spaces}$(i)$, we have
\begin{align*}
\|J \|_{L^q} & \leq C \int_{s+2h}^t  \| f \|_{\mathcal{B}_\infty^\gamma} (r_{h}-s)^{H\gamma}  \diff r .
\end{align*}
Since $2(r_{h}-s) \geq r-s$, we obtain
\begin{align}\label{eqAst>h-Khn}
\|J \|_{L^q} & \leq C \int_{s+2h}^t  \| f \|_{\mathcal{B}_\infty^\gamma} (r-s)^{H\gamma}  \diff r \nonumber\\
& \leq C \| f \|_{\mathcal{B}_\infty^\gamma} (t-s)^{1+H\gamma} \nonumber\\
& \leq C \| f \|_{\mathcal{B}_\infty^\gamma} (t-s)^{\frac{1}{2}+H} .
\end{align}
Overall, combining \eqref{eqAst<h-Khn} and \eqref{eqAst>h-Khn}, we obtain that for all $s \leq t$,
\begin{align*}
\| A_{s,t}\|_{L^{q}} & \leq C \Big(   \|f\|_{\infty} h^{\frac{1}{2}-H} (t-s)^{\frac{1}{2}+H} + \| f \|_{\mathcal{B}_\infty^\gamma} (t-s)^{\frac{1}{2}+H} \Big) .
\end{align*}
Thus for any $u\in [s,t]$,
\begin{align*}
\| \delta A_{s,u,t}\|_{L^{q}} & \leq \| A_{s,t}\|_{L^{m}}+\| A_{s,u}\|_{L^{m}}+\| A_{u,t}\|_{L^{m}}\\
&\leq  C \Big(   \|f\|_{\infty} h^{\frac{1}{2}-H} + \| f \|_{\mathcal{B}_\infty^\gamma} \Big)  (t-s)^{\frac{1}{2}+H} .
\end{align*}
The power in $(t-s)$ is strictly larger than $1/2$, so \eqref{sts2} holds.

\paragraph{Convergence in probability.}
Finally, for a sequence $(\Pi_n)_{n \in 
\mathbb{N}}$ of partitions of $[S,t]$ with $\Pi_n=\{t_i^n\}_{i=1}^{N_n}$ and mesh size 
converging to zero, we have
\begin{align*}
\Big\| \mathcal{A}_t -  \sum_{i=1}^{N_{n}-1} A_{t_i^n,t_{i+1}^n} \Big\|_{L^1} & = \EE \left| \sum_{i=1}^{N_{n}-1} \int_{t_i^n}^{t_{i+1}^n}  f(\psi+ B_{r_h})  -\mathbb{E}^{t_i^n}[  f(\psi+ B_{r_h})]  \diff r \right| \\
& \leq  \sum_{i=1}^{N_{n}-1} \int_{t_i^n}^{t_{i+1}^n} \mathbb{E}\Big| f(\psi +B_{r_h}) -\mathbb{E}^{t_i^n} f(\psi +B_{r_h}) \Big| \diff r.
\end{align*}
Note that if $r_h \leq t_i^n$, then $\mathbb{E}| f(\psi +B_{r_h}) -\mathbb{E}^{t_i^n} f(\psi +B_{r_h})| = 0$. On the other hand, when $r_h \in (t_i^n,t_{i+1}^n]$ then in view of Lemma~\ref{lemreg-B}$(iii)$, we have
\begin{align*}
\mathbb{E}| f(\psi +B_{r_h}) -\mathbb{E}^{t_i^n} f(\psi +B_{r_h}) | \leq C \| f \|_{\mathcal{C}^1} |\Pi_n|^H .
\end{align*}
It follows that
\begin{align*}
\Big\| \mathcal{A}_t -  \sum_{i=1}^{N_{n}-1} A_{t_i^n,t_{i+1}^n} \Big\|_{L^1} & \leq \sum_{i=1}^{N_{n}-1} \int_{t_i^n}^{t_{i+1}^n} \| f \|_{\mathcal{C}^1} |\Pi_n|^H \diff r ,
\end{align*}
and therefore $\sum_{i=1}^{N_{n}-1} A_{t_{i}^n, t_{i+1}^n}$ converges in probability to $\mathcal{A}_{t}$ as $n \to +\infty$.
Hence we can apply Lemma~\ref{lemSSL} with $\varepsilon_1>0$ and $\varepsilon_2 = H$ to conclude that
\begin{align*}
\big\| \big( \EE^S | \mathcal{A}_t - \mathcal{A}_s |^m \big)^{\frac{1}{m}} \big\|_{L^q} & \leq \big\| \big(\EE^S | \mathcal{A}_t - \mathcal{A}_s- A_{s,t}  |^m \big)^{\frac{1}{m}} \big\|_{L^q} + \| A_{s,t} \|_{L^q}  \\
& \leq C\, \Big( \|f\|_\infty \, h^{\frac{1}{2}-\varepsilon} +\| f \|_{\mathcal{B}_\infty^\gamma}  \Big)  \, (t-s)^{\frac{1}{2}+H }  .
\end{align*}
\end{proof}

\begin{proposition}\label{propbound-Khn}
Recall that $\gamma$ satisfies \eqref{eqcond-gamma-p-H}. Let $\varepsilon \in (0,\frac{1}{2})$ and $m \in [2, \infty)$.
There exists a constant $C>0$ such that for any $\mathbb{R}^d$-valued $\mathbb{F}$-adapted process $(\psi_{t})_{t\in [0,1]}$, any  $f \in \mathcal{C}^\infty_{b}(\mathbb{R}^d, \mathbb{R}^d) \cap \mathcal{B}_\infty^\gamma$, any $0 \leq S < T \leq 1$ and any $(s, t)\in \Delta_{S,T}$, we have
\begin{align}\label{eqbound-Khn-general}
\Big\| \Big( \EE^S \Big| \int_s^t f(\psi_{r} + B_{r_h}) \diff r \Big|^m \Big)^{\frac{1}{m}}  \Big\|_{L^{\infty}} & \leq  C\, \Big( \|f\|_\infty\,  h^{\frac{1}{2}-H} + \| f \|_{\mathcal{B}_\infty^\gamma} \Big) (t-s)^{\frac{1}{2}+H}  \nonumber \\ 
&~ + C\,  [ \psi ]_{\mathcal{C}^{\frac{1}{2}+H}_{[S,T]} L^{m,\infty}}  \, \Big( \| f \|_{\mathcal{B}_\infty^{\gamma}} + \| f \|_{\mathcal{C}^1} h^{\frac{1}{2}+H-\varepsilon} \Big)   \, (t-s)^{1+\varepsilon} .
\end{align}
\end{proposition}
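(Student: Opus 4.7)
The plan is to apply the stochastic sewing lemma (Lemma~\ref{lemSSL}) with germ $A_{s,t} \colon= \mathbb{E}^s \int_s^t f(\psi_s + B_{r_h})\,dr$ and target process $\mathcal{A}_t = \int_S^t f(\psi_r + B_{r_h})\,dr$, so that $\mathcal{A}_t - \mathcal{A}_s = \int_s^t f(\psi_r + B_{r_h})\,dr$. The first line of~\eqref{eqbound-Khn-general} comes from controlling $\|(\mathbb{E}^S|A_{s,t}|^m)^{1/m}\|_{L^\infty}$ via Lemma~\ref{lembound-Khn} applied with $S$ replaced by $s$, $q=\infty$ and the $\mathcal{F}_s$-measurable random variable $\psi_s$, combined with conditional Jensen and the tower property.

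To verify the two germ estimates needed by sewing, one starts from the identity $\delta A_{s,u,t} = \mathbb{E}^s \int_u^t f(\psi_s + B_{r_h})\,dr - \mathbb{E}^u \int_u^t f(\psi_u + B_{r_h})\,dr$. Its conditional $L^m$-norm $\|(\mathbb{E}^s|\delta A_{s,u,t}|^m)^{1/m}\|_{L^\infty}$ is handled by the $L^m$-triangle inequality and two applications of Lemma~\ref{lembound-Khn} to the frozen variables $\psi_s$ and $\psi_u$ on $[u,t]$, yielding a $(t-u)^{1/2+H}$ scaling that easily satisfies the second sewing hypothesis.

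The main technical obstacle is the bound on $\mathbb{E}^s \delta A_{s,u,t} = \int_u^t \mathbb{E}^s[f(\psi_s+B_{r_h}) - f(\psi_u+B_{r_h})]\,dr$. I would split the $r$-integration into $r \in [u, u+2h]$ and $r \in [u+2h, t]$, dealing with the degenerate case $t-s \leq 4h$ separately using only the short piece. On the short region, no noise regularisation is available (since $r_h$ may be $\leq u$), so $\mathcal{C}^1$-Lipschitzness of $f$ combined with $\|(\mathbb{E}^s|\psi_s-\psi_u|^m)^{1/m}\|_{L^\infty} \leq [\psi]_{\mathcal{C}^{1/2+H}_{[S,T]}L^{m,\infty}}(u-s)^{1/2+H}$ produces a contribution of order $h\,\|f\|_{\mathcal{C}^1}[\psi]_{\mathcal{C}^{1/2+H}_{[S,T]}L^{m,\infty}}(u-s)^{1/2+H}$. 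On the long region, $r_h - u \geq (r-u)/2 > 0$, so I insert $\mathbb{E}^u$ and apply Lemma~\ref{lemreg-B}$(ii)$ combined with Lemma~\ref{lembesov-spaces}$(iii)$ to get $|\mathbb{E}^u[f(\psi_s+B_{r_h})-f(\psi_u+B_{r_h})]| \leq C\|f\|_{\mathcal{B}_p^\gamma}|\psi_s-\psi_u|(r_h-u)^{H(\gamma-1-d/p)}$; integration, together with the fact that~\eqref{eqcond-gamma-p-H} implies $1+H(\gamma-1-d/p)\geq 1/2$, gives a contribution of order $\|f\|_{\mathcal{B}_p^\gamma}[\psi]_{\mathcal{C}^{1/2+H}_{[S,T]}L^{m,\infty}}(u-s)^{1/2+H}(t-u)^{1+H(\gamma-1-d/p)}$. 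Setting $u=(s+t)/2$ and, when $t-s \leq h$, trading a factor $(t-s)^{1/2+H-\varepsilon}$ for $h^{1/2+H-\varepsilon}$ in the Lipschitz contribution, yields $\|\mathbb{E}^s \delta A_{s,u,t}\|_{L^\infty} \leq C(\|f\|_{\mathcal{B}_p^\gamma} + \|f\|_{\mathcal{C}^1}h^{1/2+H-\varepsilon})[\psi]_{\mathcal{C}^{1/2+H}_{[S,T]}L^{m,\infty}}(t-s)^{1+\varepsilon}$, matching the first sewing hypothesis.

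Finally, the convergence in probability of the Riemann sums $\sum A_{t_i^k,t_{i+1}^k}$ to $\mathcal{A}_t$ follows by the same calculation as at the end of the proof of Lemma~\ref{lembound-Khn}, now using the $\mathcal{C}^1$-regularity of $f$, Lemma~\ref{lemreg-B}$(iii)$, and the $L^m$-continuity of $\psi$ implied by $[\psi]_{\mathcal{C}^{1/2+H}_{[S,T]}L^{m,\infty}}<\infty$. Invoking Lemma~\ref{lemSSL} and combining with the bound on $A_{s,t}$ from the first paragraph yields~\eqref{eqbound-Khn-general}.
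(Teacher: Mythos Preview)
Your proposal is correct and follows essentially the same route as the paper: stochastic sewing with germ frozen at $\psi_s$, the short/long split at $u+2h$ for the conditional increment, Lipschitz control on the short piece via $\|f\|_{\mathcal{C}^1}$ and $[\psi]_{\mathcal{C}^{1/2+H}L^{m,\infty}}$, and Besov regularisation via Lemma~\ref{lemreg-B}$(ii)$ on the long piece. The only cosmetic difference is that you take $A_{s,t}=\mathbb{E}^s\!\int_s^t f(\psi_s+B_{r_h})\,dr$ whereas the paper omits the outer $\mathbb{E}^s$; both choices yield the same $\mathbb{E}^s[\delta A_{s,u,t}]$ and both second-condition bounds reduce to Lemma~\ref{lembound-Khn}. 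One small slip: for the long-region Besov estimate you want Lemma~\ref{lembesov-spaces}$(ii)$ (two-point difference), not $(iii)$.
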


\begin{remark}\label{rmk:4.13}
A direct consequence of this proposition is that for any $(s, t)\in \Delta_{0,1}$, we have
\begin{align}\label{eqbound-general-discrete}
\Big\| \Big( \EE^s \Big| \int_s^t f(\psi_{r} + B_{r_h}) \diff r \Big|^m \Big)^{\frac{1}{m}}  \Big\|_{L^{\infty}} & \leq  C\, \Big( \|f\|_\infty\,  h^{\frac{1}{2}-H} + \| f \|_{\mathcal{B}_\infty^\gamma} \Big) (t-s)^{\frac{1}{2}+H}  \nonumber \\ 
&~ + C\,  [ \psi ]_{\mathcal{C}^{\frac{1}{2}+H}_{[s,t]} L^{m,\infty}}  \, \Big( \| f \|_{\mathcal{B}_\infty^{\gamma}} + \| f \|_{\mathcal{C}^1} h^{\frac{1}{2}+H-\varepsilon} \Big)   \, (t-s)^{1+\varepsilon} .
\end{align}
\end{remark}

\begin{proof}
We will check the conditions in order to apply Lemma~\ref{lemSSL} (with $q=\infty$). Let ${0 \leq S < T \leq 1}$. Assume that $[\psi]_{\mathcal{C}^{1/2+H}_{[S,T]}L^{m,\infty}}<\infty$, otherwise 
\eqref{eqbound-Khn-general} trivially holds. 
For any $(s,t) \in \Delta_{S,T}$, define 
\begin{align*}
A_{s,t} = \int_s^t f(\psi_s+ B_{r_h}) \diff r   ~~\mbox{and}~~
\mathcal{A}_t = \int_S^t f(\psi_r+ B_{r_h}) \diff r  .
\end{align*}
To show that \eqref{sts1} and \eqref{sts2} hold true with $\varepsilon_1= \varepsilon$, $\varepsilon_2=H >0$ and $\alpha_1=\alpha_2=0$, we prove that there exists a constant $C>0$ independent of $s,t,S$ and $T$ such that for $u = (s+t)/2$,
\begin{enumerate}[label=(\roman*)]
\item \label{item56(1)sde} 
$\|\EE^{{s}} [\delta A_{s,u,t}]\|_{L^\infty} \leq  C\, [ \psi ]_{\mathcal{C}^{\frac{1}{2}+H}_{[S,T]} L^{m,\infty}} \, \Big(\| f \|_{\mathcal{B}_\infty^{\gamma}} + \| f \|_{\mathcal{C}^1} h^{\frac{1}{2}+H-\varepsilon} \Big) \, (t-s)^{1+\varepsilon} $\text{;}

\item \label{item56(2)sde} 
$\big\| \big( \EE^S | \delta A_{s,u,t} |^m \big)^{\frac{1}{m}} \big\|_{L^\infty} \leq C\, \Big( \|f\|_{\infty}\, h^{\frac{1}{2}-H} + \| f \|_{\mathcal{B}_\infty^\gamma} \Big) \, ({t}-{s})^{\frac{1}{2}+H}$\text{;}

\item \label{item56(3)sde} If \ref{item54(1)} and \ref{item54(2)} are satisfied, \eqref{sts3} gives the 
convergence in probability of $\sum_{i=1}^{N_n-1} A_{t^n_i,t^n_{i+1}}$ along any sequence of 
partitions $\Pi_n=\{t_i^n\}_{i=1}^{N_n}$ of $[S,t]$ with mesh converging to $0$. We will prove 
that the limit is the process $\mathcal{A}$ given above.
\end{enumerate}

Assume for now that \ref{item56(1)sde}, \ref{item56(2)sde} and \ref{item56(3)sde} hold. Applying Lemma~\ref{lemSSL}, we obtain that
\begin{align*}
 \Big\| \Big( \EE^S \Big| \int_s^t f(\psi_{r} + B_{r_h}) \diff r \Big|^m \Big)^{\frac{1}{m}}  \Big\|_{L^{\infty}} 
   & \leq C\, \Big( \|f\|_{\infty}\, h^{\frac{1}{2}-H} + \| f \|_{\mathcal{B}_\infty^\gamma} \Big) \, ({t}-{s})^{\frac{1}{2}+H}  \\ 
   & ~ + C\, [ \psi ]_{\mathcal{C}^{\frac{1}{2}+H}_{[S,T]} L^{m,\infty}} \, \Big(\| f \|_{\mathcal{B}_\infty^{\gamma}} + \| f \|_{\mathcal{C}^1} h^{\frac{1}{2}+H-\varepsilon} \Big) \, (t-s)^{1+\varepsilon} \\
   &~+\big\| \big(\EE^S |A_{s,t}|^m\big)^{\frac{1}{m}} \big\|_{L^\infty}.
\end{align*}
Applying Lemma~\ref{lembound-Khn} with $q=\infty$ and $\psi = \psi_{s}$ for the last  term of the previous equation, we get \eqref{eqbound-Khn-general}.%

We now check that the conditions \ref{item56(1)sde}, \ref{item56(2)sde} and \ref{item56(3)sde} actually hold.

\smallskip

Proof of \ref{item56(1)sde}:  We have

\begin{align*}
\mathbb{E}^s \delta A_{s,u,t} & = \int_u^t \mathbb{E}^s [ f(\psi_s+ B_{r_h})- f(\psi_u+ B_{r_h}) ] \diff r  .
\end{align*}

\paragraph{The case $t-u \leq 2h$.}
In this case, using the Lipschitz norm of $f$, we have
\begin{align*}
| \mathbb{E}^s \delta A_{s,u,t} | & \leq  \|f\|_{\mathcal{C}^1} \int_u^t  \EE^s|\psi_s-\psi_u|  \diff r \leq \|f\|_{\mathcal{C}^1} \, (t-u)  \left(\EE^s|\psi_s-\psi_u|^m\right)^{\frac{1}{m}}. 
\end{align*}
Thus using the inequality $(t-u) (u-s)^{1/2+H} \leq C\, h^{1/2+H-\varepsilon} (t-u)^{1/2-H+\varepsilon} (u-s)^{1/2+H}  \leq C (t-s)^{1+\varepsilon} h^{\frac{1}{2}+H-\varepsilon}$, it comes
\begin{align*}
\| \mathbb{E}^s \delta A_{s,u,t} \|_{L^\infty} 
& \leq C\, \|f\|_{\mathcal{C}^1}\,  [ \psi ]_{\mathcal{C}^{\frac{1}{2}+H}_{[S,T]} L^{m,\infty}}  (t-s)^{1+\varepsilon} h^{\frac{1}{2}+H-\varepsilon} .
\end{align*}

\paragraph{The case $t-u > 2h$.} We split the integral between $u$ and $u+2h$ and then between $u+2h$ and $t$ as follows:
\begin{align*}
\EE^s \delta A_{s,u,t} &= \int_u^{u+2h} \mathbb{E}^s [ f(\psi_s+ B_{r_h})- f(\psi_u+ B_{r_h}) ] \diff r + \int_{u+2h}^t \mathbb{E}^s [ f(\psi_s+ B_{r_h})- f(\psi_u+ B_{r_h}) ] \diff r\\
&=\colon J_1 + J_2  .
\end{align*}
For $J_1$, we obtain as in the case $t-u\leq 2h$ that
\begin{align}\label{eqJ1Prop56}
\| J_{1}\|_{L^\infty} = \|\mathbb{E}^s \delta A_{s,u,u+2h}\|_{L^\infty}
& \leq  C  \|f\|_{\mathcal{C}^1} [ \psi ]_{\mathcal{C}^{\frac{1}{2}+H}_{[S,T]} L^{m,\infty}} (u+2h-s)^{1+\varepsilon}\, h^{\frac{1}{2}+H-\varepsilon} \nonumber\\
& \leq  C  \|f\|_{\mathcal{C}^1} [ \psi ]_{\mathcal{C}^{\frac{1}{2}+H}_{[S,T]} L^{m,\infty}} (t-s)^{1+\varepsilon}\, h^{\frac{1}{2}+H-\varepsilon} .
\end{align}
As for $J_2$, the tower property of the conditional expectation yields
\begin{align*}
J_{2} =\mathbb{E}^s \int_{u+2h}^t \mathbb{E}^u [ f(\psi_s+ B_{r_h})- f(\psi_u+ B_{r_h}) ] \diff r .
\end{align*}
Now use Lemma~\ref{lemreg-B}$(ii)$ and Lemma~\ref{lembesov-spaces}$(ii)$ to obtain
\begin{align*}
|J_{2}| &\leq C  \int_{u+2h}^t \mathbb{E}^s\| f(\psi_s+ \cdot) - f(\psi_u+ \cdot) \|_{\mathcal{B}_\infty^{\gamma-1}} (r_h-u)^{H(\gamma-1)}\, dr \\
&\quad \leq C \,  \|f \|_{\mathcal{B}_\infty^{\gamma}}\, \left(\mathbb{E}^s|\psi_{s}-\psi_{u}|^m\right)^{\frac{1}{m}} \int_{u+2h}^t (r_{h}-u)^{H(\gamma-1)} \, dr \, .
\end{align*}
Using the fact that $2(r_h-u) \ge (r-u)$, it comes
\begin{align}\label{eqJ2Prop56}
\| J_{2} \|_{L^\infty} & \leq C\,[ \psi ]_{\mathcal{C}^{\frac{1}{2}+H}_{[S,T]} L^{m,\infty}} \|f \|_{\mathcal{B}_\infty^{\gamma}} \, (u-s)^{\frac{1}{2}+H} \int_{u+2h}^t (r-u)^{H(\gamma-1)}  \, dr \nonumber\\
&\leq  C\, [ \psi ]_{\mathcal{C}^{\frac{1}{2}+H}_{[S,T]} L^{m,\infty}} \| f \|_{\mathcal{B}_\infty^{\gamma}} \, (u-s)^{\frac{1}{2}+H}\,  (t-u)^{1+H(\gamma-1)} \nonumber\\
& \leq  C\, [ \psi ]_{\mathcal{C}^{\frac{1}{2}+H}_{[S,T]} L^{m,\infty}} \| f \|_{\mathcal{B}_\infty^{\gamma}} \, (t-s)^{1+H} .
\end{align}
In view of the inequalities \eqref{eqJ1Prop56} and \eqref{eqJ2Prop56}, we have proven \ref{item56(1)sde}.

\medskip

Proof of \ref{item56(2)sde}: 
We write
\begin{equation*}
\big\| \big( \EE^S | \delta A_{{s},u,{t}} |^m \big)^{\frac{1}{m}} \big\|_{L^\infty} \leq \big\| \big( \EE^S | \delta A_{{s},{t}} |^m \big)^{\frac{1}{m}} \big\|_{L^\infty} + \big\| \big( \EE^S | \delta A_{{s},u} |^m \big)^{\frac{1}{m}} \big\|_{L^\infty} + \big\| \big( \EE^S | \delta A_{u,{t}} |^m \big)^{\frac{1}{m}} \big\|_{L^\infty} .
\end{equation*}
Applying Lemma~\ref{lembound-Khn} with $q=\infty$ for each term in the right-hand side of the previous inequality, respectively for $\psi = \psi_{s}$, $\psi_{s}$ again and $\psi_{u}$, we get \ref{item56(2)sde}. 

\medskip

Proof of \ref{item56(3)sde}: The proof is similar to point~\ref{item51(3)} of Proposition~\ref{propbound-E1-SDE}.
\end{proof}

\subsection{H\"older regularity of the tamed Euler scheme}\label{subsecreg-schema}

The H\"older regularity of the tamed Euler scheme, uniformly in $(h,k)$, can now be deduced from the properties of the previous section.

\begin{corollary}\label{corbound-Khn}
Assume \eqref{eqcond-gamma-p-H}, let $\mathcal{D}$ be a sub-domain of $(0,1) \times \mathbb{N}$ satisfying \eqref{eqassump-bn-bounded} and let $m \in [2, \infty)$. Recall also that $K^{h,k}$ was defined in \eqref{eqdef-Khn}. 
Then
\begin{align*}%
\sup_{(h,k)\in \mathcal{D}} [ K^{h,k} ]_{\mathcal{C}^{\frac{1}{2}+H}_{[0,1]} L^{m,\infty}}  < \infty .
\end{align*}
\end{corollary}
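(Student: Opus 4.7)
The plan is to establish a self-bounding (Gronwall-type) inequality for $\mathcal{N}_{[S,T]} := [K^{h,n}]_{\mathcal{C}^{1/2+H}_{[S,T]} L^{m,\infty}}$ on short intervals via Proposition~\ref{propbound-Khn}, then glue local estimates into a global bound. For each fixed $(h,n) \in \mathcal{D}$, smoothness of $b^n$ makes $K^{h,n}$ Lipschitz in time, so $\mathcal{N}_{[0,1]}$ is \emph{a priori} finite (with a constant possibly depending on $h,n$); this qualitative finiteness is needed only to absorb the self-referential term in the forthcoming step, not for the final uniform bound.

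The main difficulty is that the naive choice $\psi_r = X_0 + K^{h,n}_{r_h}$ in Proposition~\ref{propbound-Khn} fails, because this piecewise-constant process has infinite H\"older seminorm across bin boundaries. The key is to work instead with the continuous-time pivot $\psi_r = X_0 + K^{h,n}_r$, and to control the Euler discretisation discrepancy. Specifically, for $(s,t) \in \Delta_{0,1}$ with $t-s > h$, write
\begin{equation*}
K^{h,n}_t - K^{h,n}_s = \int_s^t b^n(X_0 + K^{h,n}_r + B_{r_h})\, dr + R_{s,t},
\end{equation*}
with $R_{s,t} = \int_s^t [b^n(X_0 + K^{h,n}_{r_h} + B_{r_h}) - b^n(X_0 + K^{h,n}_r + B_{r_h})]\, dr$. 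Since the Euler drift is frozen on each bin, $K^{h,n}_r - K^{h,n}_{r_h} = (r-r_h)\, b^n(X^{h,n}_{r_h})$, whence the pathwise bound $|R_{s,t}| \leq \|b^n\|_{\mathcal{C}^1}\|b^n\|_\infty h\, (t-s)$. Under \eqref{eqassump-bn-bounded}, this is at most $Ch^\eta(t-s) \leq C(t-s)^{1/2+H}$ uniformly in $\mathcal{D}$. For $t-s \leq h$, the trivial estimate $\|K^{h,n}_t - K^{h,n}_s\|_{L^\infty} \leq \|b^n\|_\infty(t-s) \leq \|b^n\|_\infty h^{1/2-H}(t-s)^{1/2+H}$ likewise yields $\leq C(t-s)^{1/2+H}$.

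Applying Proposition~\ref{propbound-Khn} with $f=b^n$ and $\psi_r = X_0 + K^{h,n}_r$ (so that $[\psi]_{\mathcal{C}^{1/2+H}_{[s,t]} L^{m,\infty}} = \mathcal{N}_{[s,t]}$ is finite), and picking $\varepsilon \in (0,\eta)$ with $\eta$ from \eqref{eqassump-bn-bounded}, the prefactors $\|b^n\|_\infty h^{1/2-H}$, $\|b^n\|_{\mathcal{C}^1} h^{1/2+H-\varepsilon}$ and $\|b^n\|_{\mathcal{B}_p^\gamma}$ are all bounded uniformly on $\mathcal{D}$, whence
\begin{equation*}
\Big\|\bigl(\EE^s\bigl|{\textstyle\int_s^t}\, b^n(\psi_r + B_{r_h})\, dr\bigr|^m\bigr)^{1/m}\Big\|_{L^\infty} \leq C(t-s)^{1/2+H} + C\, \mathcal{N}_{[s,t]}\, (t-s)^{1+\varepsilon}.
\end{equation*}
Combined with the bound on $R_{s,t}$, dividing by $(t-s)^{1/2+H}$ and taking the supremum over $(s,t) \in \Delta_{S,T}$ yields $\mathcal{N}_{[S,T]} \leq C + C\, \mathcal{N}_{[S,T]}\, (T-S)^{1/2-H+\varepsilon}$. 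Choosing $\ell > 0$ (independent of $\mathcal{D}$) with $C\ell^{1/2-H+\varepsilon} \leq 1/2$, the a priori finiteness of $\mathcal{N}_{[0,1]}$ permits subtracting the self-referential term, giving $\mathcal{N}_{[S,T]} \leq 2C$ whenever $T-S \leq \ell$.

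To deduce the global estimate, consider $(s,t) \in \Delta_{0,1}$: if $t-s \leq \ell$, the local bound applies directly, giving $\|\EE^s|K^{h,n}_t - K^{h,n}_s|^m\|_{L^\infty}^{1/m} \leq 2C(t-s)^{1/2+H}$. Otherwise, partition $[s,t]$ into $N \leq \lceil 1/\ell\rceil$ sub-intervals $s = t_0 < \cdots < t_N = t$ of length at most $\ell$, and apply Minkowski's inequality together with the tower property (using $\EE^s = \EE^s \EE^{t_i}$ for $s \leq t_i$) to each piece via the local bound; this yields a uniform estimate $\sup_{(h,n) \in \mathcal{D}} \mathcal{N}_{[0,1]} < \infty$.
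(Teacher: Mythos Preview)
Your proof is correct and follows essentially the same approach as the paper: both use the continuous-time pivot $\psi_r = X_0 + K^{h,n}_r$, apply Proposition~\ref{propbound-Khn} (via \eqref{eqbound-general-discrete}), control the discretisation remainder by $\|b^n\|_{\mathcal{C}^1}\|b^n\|_\infty\, h\,(t-s)$, invoke \eqref{eqassump-bn-bounded} for uniformity in $\mathcal{D}$, and close via a short-interval self-bounding inequality. Your write-up is slightly more explicit about the a priori finiteness of $\mathcal{N}_{[0,1]}$ (needed to absorb) and the gluing step via Minkowski plus the tower property, but the substance is the same.
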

\begin{proof}
Let $(h,k)\in \mathcal{D}$ and $\varepsilon \leq H$.
In view of Equation \eqref{eqbound-general-discrete} from Remark \ref{rmk:4.13}, we have for $f=b^k$ and $\psi=X_0+K^{h,k}$ that there exists a constant $C$ such that for any $h \in (0,1)$, $k \in \mathbb{N}$ and $(s,t) \in \Delta_{0,1}$,
\begin{align*}
\Big\|  \Big( \EE^s \Big| \int_s^t b^k(X_0+K^{h,k}_{r} + B_{r_h}) \, dr \Big|^m  \Big)^{\frac{1}{m}} \Big\|_{L^{\infty}} & \leq  C\, \Big( \|b^k\|_\infty\,  h^{\frac{1}{2}-H} + \| b \|_{\mathcal{B}_\infty^\gamma} \Big) (t-s)^{\frac{1}{2}+H}  \nonumber \\ 
&~ + C\,  [ K^{h,k} ]_{\mathcal{C}^{\frac{1}{2}+H}_{[s,t]} L^{m,\infty}}  \, \Big( \| b \|_{\mathcal{B}_\infty^{\gamma}} + \| b^k \|_{\mathcal{C}^1} h^{\frac{1}{2}+H-\varepsilon} \Big)   \, (t-s)^{1+\varepsilon} ,
\end{align*}
where we used that $\|b^k\|_{\mathcal{B}_\infty^\gamma} \leq \| b \|_{\mathcal{B}_\infty^\gamma}$. In particular, for $0\leq S<T\leq 1$ and any $(s,t)\in \Delta_{S,T}$,
\begin{align*}
\Big\|  \Big( \EE^s \Big| \int_s^t b^k(X_0+K^{h,k}_{r} + B_{r_h}) \, dr \Big|^m  \Big)^{\frac{1}{m}} \Big\|_{L^{\infty}} & \leq  C\, \Big( \|b^k\|_\infty\,  h^{\frac{1}{2}-H} + \| b \|_{\mathcal{B}_\infty^\gamma} \Big) (t-s)^{\frac{1}{2}+H}  \nonumber \\ 
&~ + C\,  [ K^{h,k} ]_{\mathcal{C}^{\frac{1}{2}+H}_{[S,T]} L^{m,\infty}}  \, \Big( \| b \|_{\mathcal{B}_\infty^{\gamma}} + \| b^k \|_{\mathcal{C}^1} h^{\frac{1}{2}+H-\varepsilon} \Big)   \, (t-s)^{1+\varepsilon} .
\end{align*}
Moreover, using that $| K^{h,k}_r-K^{h,k}_{r_h} |  \leq \| b^k \|_\infty h$, we have
\begin{align*}
\Big\|  \Big( \EE^s \big|K^{h,k}_t-K^{h,k}_s \big|^m  \Big)^{\frac{1}{m}} \Big\|_{L^{\infty}} & \leq \Big\|  \Big( \EE^s \Big| \int_s^t b^k(X_0+K^{h,k}_{r_h} + B_{r_h}) -b^k(X_0+K^{h,k}_{r}  + B_{r_h})  \diff r \Big|^m  \Big)^{\frac{1}{m}} \Big\|_{L^{\infty}} \\
&\quad + \Big\|  \Big( \EE^s \Big| \int_s^t b^k(X_0+K^{h,k}_{r} + B_{r_h}) \diff r \Big|^m  \Big)^{\frac{1}{m}} \Big\|_{L^{\infty}} \\
& \leq C\, \| b^k\|_{\mathcal{C}^1} \| b^k \|_\infty h\, (t-s) + \Big\|  \Big( \EE^s \Big| \int_s^t b^k(X_0+K^{h,k}_{r} + B_{r_h}) \diff r \Big|^m  \Big)^{\frac{1}{m}} \Big\|_{L^{\infty}} \\
& \leq C\, \Big( \|b^k\|_\infty\,  h^{\frac{1}{2}-H} + \| b \|_{\mathcal{B}_\infty^\gamma} \Big) (t-s)^{\frac{1}{2}+H}  \nonumber \\ 
&~ + C\, \Big( [ K^{h,k} ]_{\mathcal{C}^{\frac{1}{2}+H}_{[S,T]} L^{m,\infty}}  ( \| b \|_{\mathcal{B}_\infty^{\gamma}} + \| b^k \|_{\mathcal{C}^1} h^{\frac{1}{2}+H-\varepsilon}) +\| b^k\|_{\mathcal{C}^1} \| b^k \|_\infty  h \Big)   \, (t-s)  .
\end{align*}
Now using \eqref{eqassump-bn-bounded} with $\eta=\varepsilon$ small enough, 
we get $\sup_{(h,k)\in \mathcal{D}} \| b^k \|_{\mathcal{C}^1} \|b^k \|_\infty h < \infty$ and
\begin{align*}
\Big\|  \Big( \EE^s \big|K^{h,k}_t-K^{h,k}_s \big|^m  \Big)^{\frac{1}{m}} \Big\|_{L^{\infty}}  \leq C\, (t-s)^{\frac{1}{2}+H} + C\, \Big( [ K^{h,k} ]_{\mathcal{C}^{\frac{1}{2}+H}_{[S,T]} L^{m,\infty}} + 1 \Big)   \, (t-s)  .
\end{align*}
Now divide by $(t-s)^{\frac{1}{2}+H}$ and take the supremum over $[S,T]$ to get that
\begin{align*}
 [ K^{h,k} ]_{\mathcal{C}^{\frac{1}{2}+H}_{[S,T]} L^{m,\infty}}  
\leq  C  + C\,  [ K^{h,k} ]_{\mathcal{C}^{\frac{1}{2}+H}_{[S,T]} L^{m,\infty}} \, (T-S)^{\frac{1}{2}-H} .
\end{align*}
Let $\ell = \big( \frac{1}{2C} \big)^{\frac{1}{1/2-H}} $. Then for $T-S \leq \ell$, we deduce
\begin{align*}
 [ K^{h,k} ]_{\mathcal{C}^{\frac{1}{2}+H}_{[S,T]} L^{m,\infty}} \leq 2C.
\end{align*}
Since $\ell$ does not depend on $h$ nor $k$, we get the H\"older regularity on the whole interval $[0,1]$.
\end{proof}

%%%%%%%%%
%%%%%%%%%

\subsection{H\"older regularity of $E^{1,h,k}$}\label{subsecE1hn}

We now apply Proposition~\ref{propbound-E1-SDE} in the sub-critical case to obtain the following bound on $E^{1,h,k}$, which is used in Section~\ref{secoverview-SDE}.

\begin{corollary}\label{corbound-E1-SDE}
Let $m \in [2,\infty)$ and assume that $\gamma \in( 1-\frac{1}{2H},0)$. Recall that the process $K^{h,k}$ was defined in \eqref{eqdef-Khn} and let $X_0$ be an $\mathcal{F}_0$-measurable random variable. There exists a constant $C>0$ such that for any $0 \leq S < T \leq 1$, any $(s, t)\in \Delta_{S,T}$, any $h\in (0,1)$ and any $k\in \N$,
\begin{align*}
& \Big\| \int_s^t b^k(X_0 + K_r + B_r) - b^k(X_0 + K^{h,k}_r +  B_r)  \, dr \Big\|_{L^{m}} \\ 
&\quad \leq C \| b \|_{\mathcal{B}_\infty^{\gamma}} \Big( 1 + [X^{h,k}-B]_{\mathcal{C}^{\frac{1}{2}+H}_{[S,T]} L^{m,\infty}}   \Big)  \Big( [K-K^{h,k}]_{\mathcal{C}^{\frac{1}{2}}_{[S,T]} L^{m}}+ \| K_S  - K^{h,k}_S \|_{L^m} \Big)(t-s)^{1+H(\gamma-1)}   .
\end{align*}
\end{corollary}

\begin{proof}
Notice that $\tau = 1/2$ satisfies \eqref{eqtau-cond}. Hence apply Proposition \ref{propbound-E1-SDE} with $\tau=1/2$, $f=b^k$, $\psi =X_0+ K^{h,k}$ and $\phi =   X_0+K$. Then, recall from \eqref{eqconv-in-gamma-} that $\| b^k \|_{\mathcal{B}_\infty^\gamma} \leq \| b \|_{\mathcal{B}_\infty^\gamma}$ to get the result. 
\end{proof}

The following proposition provides a result similar to Corollary~\ref{corbound-E1-SDE} but in the limit case.

\begin{proposition}\label{propbound-E1-SDE-critic}
Let $\gamma=1-1/(2H)$. Let $m \in [2, \infty)$, $p \in [m,+\infty)$ and $b \in \mathcal{B}_p^{\gamma+d/p}$. Denote $\tilde\gamma=\gamma+d/p$ and let $(b^k)_{k \in \mathbb{N}}$ be a sequence of smooth functions such that $\sup_{k \in \N}\| b^k\|_{\mathcal{B}_p^{\tilde\gamma}} \leq \| b\|_{\mathcal{B}_p^{\tilde\gamma}}$. Let $X_0$ be an $\mathcal{F}_0$-measurable random variable, $(X,B)$ be a weak solution to \eqref{eqSDE} satisfying $X-B \in \mathcal{C}^{1/2+H}_{[0,1]} L^m$ and let $(X^{h,k})_{h \in (0,1), k \in \mathbb{N}}$ be the tamed Euler scheme defined in \eqref{defEulerSDE}, on the same probability space and with the same fBm $B$ as $X$. Let $\mathcal{D}$ be a sub-domain of $(0,1) \times \N$ satisfying \eqref{eqassump-bn-bounded}. Let $\zeta\in (0,1/2)$. Recall also that $K^k$ and $K^{h,k}$ were defined in \eqref{eqdef-Khn}, and $\epsilon(h,k)$ was defined in \eqref{eqdefepsilonhn}.

There exist constants $ \mathbf{M} >0$ and $\ell_{0}>0$ such that for any $0 \leq S < T \leq 1$ which satisfy $T-S\leq \ell_{0}$, any $(s, t)\in \Delta_{S,T}$, any $(h,k) \in \mathcal{D}$,
\begin{align*}%
 \Big\| &\int_s^t  b^k( X_0+K_r+ B_r) - b^k(X_0+K^{h,k}_r+  B_r) \, dr \Big\|_{L^{m}}   \\ 
 &\leq  \M \, \bigg(1+ \Big|\log \frac{T^H}{ \|K- K^{h,k} \|_{L^\infty_{[S,T]} L^{m}} +\epsilon(h,k)}\Big| \bigg) \, \Big( \|K- K^{h,k} \|_{L^\infty_{[S,T]} L^{m}} + \epsilon(h,k)\Big) \, (t-s) \nonumber\\
 &\quad+ \M \, \Big(\|K- K^{h,k} \|_{L^\infty_{[S,T]} L^{m}} + [K- K^{h,k} ]_{\mathcal{C}^{\frac{1}{2}-\zeta}_{[S,T]} L^{m}} \Big)\,   (t-s)^{\frac{1}{2}} .
\end{align*}

\end{proposition}

\begin{remark}
The constant $\M$ is important in the proof of Theorem \ref{thmmain-SDE}$(c)$, as it appears in the order of convergence. 
\end{remark}

\begin{proof}
Let $0\leq S<T\leq1$.
For $(s,t)\in \Delta_{S,T}$, let $A_{s,t}$ and $\mathcal{A}_{t}$ be defined by
\begin{equation} \label{eqProp51A-critic}
\begin{split}
&A_{s,t} = \int_{s}^{t} b^k(X_0+K_s+ B_r) - b^k(X_0+K^{h,k}_s+  B_r) \, dr ,\\
&\mathcal{A}_{t} = \int_S^{t} b^k( X_0+K_r+ B_r) - b^k(X_0+K^{h,k}_r+  B_r) \, dr  ,
\end{split}
\end{equation}
 and let
\begin{align} \label{eqR}
R_{s,t} = \mathcal{A}_t-\mathcal{A}_s-A_{s,t}  .
\end{align}
In this proof, we write $\|K- K^{h,k} \|_{\mathcal{C}^{\frac{1}{2}-\zeta}_{[S,T]} L^{m}}$ for $\| K_{S}-K^{h,k}_{S} \|_{L^{m}} + [K- K^{h,k} ]_{\mathcal{C}^{\frac{1}{2}-\zeta}_{[S,T]} L^{m}}$.

Having assumed \eqref{eqassump-bn-bounded}, Corollary~\ref{corbound-Khn} gives that $\sup_{(h,k)\in \mathcal{D}} [ K^{h,k} ]_{\mathcal{C}^{1/2+H}_{[0,1]} L^{m,\infty}}$ is finite. In this proof, the constants sometimes depend on this supremum as well as on $[ K ]_{\mathcal{C}^{1/2+H}_{[0,1]} L^{m}}$.

Let $$ 0 < \varepsilon < \left( \gamma-1+\frac{1}{2H} \right) \wedge (1-2 \zeta) .$$
 Set $\tau=1/2+\varepsilon/2$. In the following, we will check the conditions \eqref{sts1}, \eqref{sts2} and \eqref{sts4} which permit to apply the stochastic sewing with critical exponent \cite[Theorem 4.5]{athreya2020well}. To show that \eqref{sts1}, \eqref{sts2} and \eqref{sts4} hold true with 
$\varepsilon_{1} =H$, $\alpha_1=0$, $\varepsilon_2=1/2+H(\gamma-1-d/p)+\varepsilon/2 = \varepsilon/2$, $\alpha_2=0$ and $\varepsilon_4 = \varepsilon/2$ we prove that there exists a constant $C>0$ independent of $s,t,S$ and $T$ such that for $u = (s+t)/2$,
\begin{enumerate}[label=(\roman*)]
\item \label{item53(1)} 
$\|\EE^{s} \delta A_{{s},u,{t}} \|_{L^m} \leq  C \, (t-s)^{1+H}$\text{;}

\myitem{(i')}\label{item53(1')}
$\|\EE^{s} \delta A_{{s},u,{t}}\|_{L^m}  \leq  C \, [R]_{\mathcal{C}^{\tau}_{[S,T]} L^{m}} \, (t-s)^{\frac{1}{2}+\tau} +C \Big( \|K- K^{h,k} \|_{L^\infty_{[S,T]} L^{m}} + \epsilon(h,k)\Big) {(t-s)}\text{;}$

\item \label{item53(2)} 
$\| \delta A_{{s},u,{t}}\|_{L^m} \leq C\, \|K- K^{h,k} \|_{\mathcal{C}^{\frac{1}{2}-\zeta}_{[S,T]} L^{m}} \,   (t-s)^{\frac{1}{2}+\varepsilon_2} $\text{;}

\item \label{item53(3)} If \ref{item53(1)} and \ref{item53(2)} are satisfied, \eqref{sts3} gives the 
convergence in probability of $\sum_{i=1}^{N_n-1} A_{t^n_i,t^n_{i+1}}$ along any sequence of 
partitions $\Pi_n=\{t_i^n\}_{i=1}^{N_n}$ of $[S,t]$ with mesh converging to $0$. We will prove 
that the limit is the process $\mathcal{A}$ given in \eqref{eqProp51A-critic}.
\end{enumerate}

Then from \cite[Theorem 4.5]{athreya2020well}, we get
\begin{align*}
 \| R_{s,t} \|_{L^m} 
 &\leq  C\, \bigg(1+ \Big|\log \frac{T^H}{ \|K- K^{h,k} \|_{L^\infty_{[S,T]} L^{m}} +\epsilon(h,k)}\Big| \bigg) \, \Big( \|K- K^{h,k} \|_{L^\infty_{[S,T]} L^{m}} + \epsilon(h,k)\Big) \, (t-s) \nonumber\\
 &\quad+ C\, \|K- K^{h,k} \|_{\mathcal{C}^{\frac{1}{2}-\zeta}_{[S,T]} L^{m}} \,   (t-s)^{\frac{1}{2}+\frac{\varepsilon}{2}} +C \, [R]_{\mathcal{C}^{\tau}_{[S,T]} L^{m}} \, (t-s)^{\frac{1}{2}+\tau} .
\end{align*}
Now recalling that $\tau=1/2+\varepsilon/2$, we can divide both sides by $(t-s)^\tau$ and take the supremum over $(s,t) \in \Delta_{S,T}$ to get
\begin{align*}%
 [R]_{\mathcal{C}^{\tau}_{[S,T]} L^{m}} 
 &\leq  C\, \bigg(1+ \Big|\log \frac{T^H}{ \|K- K^{h,k} \|_{L^\infty_{[S,T]} L^{m}} +\epsilon(h,k)}\Big| \bigg) \, \Big( \|K- K^{h,k} \|_{L^\infty_{[S,T]} L^{m}} + \epsilon(h,k)\Big) \, (T-S)^{1-\tau} \nonumber\\
 &\quad+ C\, \|K- K^{h,k} \|_{\mathcal{C}^{\frac{1}{2}-\zeta}_{[S,T]} L^{m}} \,   +C \, [R]_{\mathcal{C}^{\tau}_{[S,T]} L^{m}} \, (T-S)^{\frac{1}{2}} .
\end{align*}
For $S< T$ such that $T-S\leq (2C)^{-2}=\colon \ell_{0}$, we get that $C \, [R]_{\mathcal{C}^{\tau}_{[S,T]} L^{m}} \, (T-S)^{\frac{1}{2}} \leq (1/2) [R]_{\mathcal{C}^{\tau}_{[S,T]} L^{m}}$. We then subtract this quantity on both sides to get
\begin{align*}
 [R]_{\mathcal{C}^{\tau}_{[S,T]} L^{m}}
 &\leq  2 C\,  \bigg(1+ \Big|\log \frac{T^H}{ \|K- K^{h,k} \|_{L^\infty_{[S,T]} L^{m}} +\epsilon(h,k)}\Big| \bigg) \, \Big( \|K- K^{h,k} \|_{L^\infty_{[S,T]} L^{m}} + \epsilon(h,k)\Big) (T-S)^{1-\tau}\\
 &\quad + 2 C\, \|K- K^{h,k} \|_{\mathcal{C}^{\frac{1}{2}-\zeta}_{[S,T]} L^{m}}.
\end{align*}
We conclude using \eqref{eqAProp53} and 
$ \| \mathcal{A}_t-\mathcal{A}_s \|_{L^m}  \leq \| R_{s,t} \|_{L^m}  + \| A_{s,t} \|_{L^m} $.

~

We now check that the conditions \ref{item53(1)}, \ref{item53(1')}, \ref{item53(2)} and \ref{item53(3)} actually hold.

\smallskip

Proof of \ref{item53(1)}:  For $u \in [s,t]$, by the tower property of conditional expectation and Fubini's theorem, we have
\begin{align}\label{eqdeltaAst-decomp-SDE-critic}
\mathbb{E}^{s} \delta A_{s,u,t} & =  \int_u^{t} \mathbb{E}^{s}\, \mathbb{E}^u \Big[ b^k(X_0+K_{s} +B_r) - b^k(X_0+K^{h,k}_{s}+ B_r) \nonumber \\ & \quad  - b^k(X_0+K_u + B_r) + b^k(X_0+K^{h,k}_u+ B_r) \Big] \, dr  \nonumber \\ 
 & =\colon\int_u^{t} \mathbb{E}^{s}\, \mathbb{E}^u [ F(B_r,s,u) + \tilde{F}(B_r,s,u) ] \, dr  , 
\end{align}
where 
\begin{align*}
F(\cdot,s,u) & =-b^k(X_0+K^{h,k}_{s} +B_r) + b^k(X_0+K_{s}+ B_r) + b^k(X_0+K_u^{h,k} + B_r) \\ & \quad - b^k(X_0+K_s + K^{h,k}_{u} - K^{h,k}_{s} + \cdot)  , \\
\tilde{F}(\cdot,s,u)& =-b^k(X_0+K_u+\cdot) + b^k(X_0+K_s + K^{h,k}_{u} - K^{h,k}_{s} + \cdot) .
\end{align*}
By Lemma \ref{lemreg-B}$(ii)$, we have for $\lambda\in [0,1]$ that
\begin{align*}
 |\mathbb{E}^u F(B_r,s,u) | \leq C\, \| F(\cdot,s,u) \|_{\mathcal{B}_p^{\tilde\gamma-1-\lambda}} \, (r-u)^{H(\gamma-1-\lambda)} .
\end{align*}
Moreover, by Lemma \ref{lembesov-spaces}$(iii)$, it comes that
\begin{align*}
\| F(\cdot,s,u) \|_{\mathcal{B}_p^{\tilde\gamma-1-\lambda}} 
& \leq C\, \|b^k \|_{\mathcal{B}_p^{\tilde\gamma}} | K^{h,k}_{s}-K^{h,k}_u |\, | K_{s}-K^{h,k}_{s} |^\lambda.
\end{align*}
Hence
\begin{align*}
 | \EE^s \mathbb{E}^u F(B_r,s,u) |
& \leq C\, \|b^k \|_{\mathcal{B}_p^{\tilde\gamma}} \, | K_{s}-K^{h,k}_{s} |^\lambda \, \EE^s | K^{h,k}_{s}-K^{h,k}_u | \, (r-u)^{H(\gamma-1-\lambda)} \\
& \leq C\, \|b^k\|_{\mathcal{B}_p^{\tilde\gamma}}\, | K_{s}-K^{h,k}_{s} |^\lambda\,  [K^{h,k}]_{\mathcal{C}^{\frac{1}{2}+H}_{[S,T]} L^{m,\infty}} \,  (u-s)^{\frac{1}{2}+H}\, (r-u)^{H(\gamma-1-\lambda)}
\end{align*}
and from Jensen's inequality,
\begin{align}\label{eqboundcondF}
 \| \EE^s \mathbb{E}^u F(B_r,s,u) \|_{L^m}
& \leq C\, \|b^k\|_{\mathcal{B}_p^{\tilde\gamma}}\,  \| K_{s}-K^{h,k}_{s} \|_{L^m}^\lambda\,  [K^{h,k}]_{\mathcal{C}^{\frac{1}{2}+H}_{[S,T]} L^{m,\infty}} \,  (u-s)^{\frac{1}{2}+H}\, (r-u)^{H(\gamma-1-\lambda)}.
\end{align}
As for $\tilde{F}$, we have similarly that
\begin{align}\label{eqboundFtilde}
 \| \EE^s \mathbb{E}^u \tilde{F}(B_r,s,u) \|_{L^m}
 &\leq C\, \| \EE^s\| \tilde{F}(\cdot,s,u) \|_{\mathcal{B}_p^{\tilde\gamma-1}} \|_{L^m} \, (r-u)^{H(\gamma-1)} \nonumber\\
 &\leq C\, \|b^k\|_{\mathcal{B}_p^{\tilde\gamma}}\, \| K^{h,k}_{s}-K^{h,k}_u -K_{s} + K_u \|_{L^m}\, (r-u)^{H(\gamma-1)}\nonumber\\
 &\leq C\, \|b^k\|_{\mathcal{B}_p^{\tilde\gamma}}\, \left( [K^{h,k}]_{\mathcal{C}^{\frac{1}{2}+H}_{[S,T]}L^m} + [K]_{\mathcal{C}^{\frac{1}{2}+H}_{[S,T]}L^m} \right) (u-s)^{\frac{1}{2}+H}\, (r-u)^{H(\gamma-1)} .
\end{align}
Choosing $\lambda=0$ and noticing that $H(\gamma-1) =-1/2$, we plug \eqref{eqboundcondF} and \eqref{eqboundFtilde} in \eqref{eqdeltaAst-decomp-SDE-critic} to obtain
\begin{align*}
\|\EE^s  \delta A_{s,u,t} \|_{L^m} 
&\leq C\, \|b^k\|_{\mathcal{B}_p^{\tilde\gamma}}\,  [K^{h,k}]_{\mathcal{C}^{\frac{1}{2}+H}_{[S,T]} L^{m,\infty}} \, (u-s)\, (t-u)^{\frac{1}{2}} \\
&\quad +  C\, \|b^k\|_{\mathcal{B}_p^{\tilde\gamma}}\, \left( [K^{h,k}]_{\mathcal{C}^{\frac{1}{2}+H}_{[S,T]}L^m} + [K]_{\mathcal{C}^{\frac{1}{2}+H}_{[S,T]}L^m} \right) (u-s)^{\frac{1}{2}+H}\, (r-u)^{\frac{1}{2}}\\
&\leq C\, \|b^k\|_{\mathcal{B}_p^{\tilde\gamma}}\, \left(  [K^{h,k}]_{\mathcal{C}^{\frac{1}{2}+H}_{[S,T]}L^{m,\infty}} + [K]_{\mathcal{C}^{\frac{1}{2}+H}_{[S,T]}L^m}\right) (t-s)^{1+H}  .
\end{align*}
Use that $\sup_{(h,k) \in \mathcal{D}} [K^{h,k} ]_{\mathcal{C}^{1/2+H}_{[0,1]} L^{m,\infty}} < +\infty$, $[K]_{\mathcal{C}^{1/2+H}_{[0,1]} L^{m}} <+\infty$ and \eqref{eqconv-in-gamma-} to deduce~\ref{item53(1)}.

\smallskip

Proof of \ref{item53(1')}:  We rely again on the decomposition \eqref{eqdeltaAst-decomp-SDE-critic}.

We now use \eqref{eqboundcondF} with $\lambda=0$ and \eqref{eqboundFtilde}. Since $H(\gamma-2) = -H-1/2>-1$, there is
\begin{equation}\label{eqcondExpAsut}
\begin{split}
\|\mathbb{E}^{s} \delta A_{s,u,t} \|_{L^m} 
&\leq C\, \|b^k \|_{\mathcal{B}_p^{\tilde\gamma}} \, [K^{h,k}]_{\mathcal{C}^{\frac{1}{2}+H}_{[S,T]} L^{m,\infty}}  \| K_{s}-K^{h,k}_{s} \|_{L^{m}}\, (t-s)\\
&\quad + C\, \|b^k \|_{\mathcal{B}_p^{\tilde\gamma}} \, \| K^{h,k}_{u}-K^{h,k}_s  - K_{u}+ K_{s} \|_{L^m}\, (t-s)^{\frac{1}{2}}.
\end{split}
\end{equation}
Here we do not expect $K^{h,k}-K$ to be $1/2$-H\"older continuous uniformly in $h$ and $n$, but only $(1/2-\zeta)$-H\"older, so we need to decompose $\| K^{h,k}_{u}-K^{h,k}_s  - K_{u}+ K_{s} \|_{L^m}$ into several terms. First we introduce the pivot term $K^k_{u}-K^k_{s}$ to get
\begin{align*}
\| K^{h,k}_{u}-K^{h,k}_s  - K_{u}+ K_{s} \|_{L^m} \leq [K-K^k]_{\mathcal{C}^{\frac{1}{2}}_{[S,T]}L^m} (u-s)^{\frac{1}{2}} + \| K^{h,k}_{u}-K^{h,k}_s  - K^k_{u}+ K^k_{s} \|_{L^m}.
\end{align*}
Now observe that from \eqref{eqdef-Khn}, \eqref{eqProp51A-critic} and \eqref{eqR},
\begin{align*}
R_{s,u} = K^k_{u}-K^k_{s} - A_{s,u} - \int_{s}^u b^k(X_0+K^{h,k}_{r} + B_{r})\, dr,
\end{align*}
so that
\begin{align*}
K^{h,k}_{u}-K^{h,k}_s  - K^k_{u}+ K^k_{s} = \int_{s}^u b^k(X_0+K^{h,k}_{r_{h}} + B_{r}) - b^k(X_0+K^{h,k}_{r} + B_{r})\, dr -A_{s,u} - R_{s,u} .
\end{align*}
Hence recalling the definition of $E^{2,h,k}$ from \eqref{eqdefE}, we get
\begin{align*}
\| K^{h,k}_{u}-K^{h,k}_s  - K^k_{u}+ K^k_{s} \|_{L^m} \leq \|E^{2,h,k}_{s,u}\|_{L^m} + \|A_{s,u}\|_{L^m} + \|R_{s,u}\|_{L^m}.
\end{align*}
As in \eqref{eqAst-}, we have
\begin{align}\label{eqAProp53}
\|A_{s,u}\|_{L^m} &\leq C \|b^k\|_{\mathcal{B}^{\tilde\gamma}_{p}}\, \|K_{s}-K^{h,k}_{s}\|_{L^m} \, (u-s)^{1+H(\gamma-1)} \nonumber\\
&= C \|b^k\|_{\mathcal{B}^{\tilde\gamma}_{p}}\, \|K_{s}-K^{h,k}_{s}\|_{L^m} \, (u-s)^{\frac{1}{2}}.
\end{align}
Thus we get
\begin{align*}
\| K^{h,k}_{u}-K^{h,k}_s  - K^k_{u}+ K^k_{s} \|_{L^m} &\leq \left( C \|b^k\|_{\mathcal{B}^{\tilde\gamma}_{p}}\, \|K_{s}-K^{h,k}_{s}\|_{L^m} + [E^{2,h,k}]_{\mathcal{C}^{\frac{1}{2}}_{[S,T]} L^{m}} \right) (u-s)^{\frac{1}{2}} \\
&\quad + [R]_{\mathcal{C}^{\tau}_{[S,T]} L^{m}}\, (u-s)^\tau .
\end{align*}
Plugging the previous inequality in \eqref{eqcondExpAsut}, it comes
\begin{align*}
\|\mathbb{E}^{s} \delta A_{s,u,t} \|_{L^m} 
&\leq C\, \|b^k\|_{\mathcal{B}^{\tilde\gamma}_{p}} \Big( (\|b^k\|_{\mathcal{B}^{\tilde\gamma}_{p}}+ [K^{h,k}]_{\mathcal{C}^{\frac{1}{2}+H}_{[S,T]} L^{m,\infty}})\, \|K_{s}-K^{h,k}_{s}\|_{L^m} + [E^{2,h,k}]_{\mathcal{C}^{\frac{1}{2}}_{[S,T]} L^{m}} \Big) (t-s) \\
&\quad+ \|b^k \|_{\mathcal{B}_p^{\tilde\gamma}} \, [R]_{\mathcal{C}^{\tau}_{[S,T]} L^{m}} \, (t-s)^{\frac{1}{2}+\tau}.
\end{align*}
Use finally that $\sup_{(h,k) \in \mathcal{D}} [K^{h,k}]_{\mathcal{C}^{\frac{1}{2}+H}_{[S,T]} L^{m,\infty}}<\infty$, $[E^{2,h,k}]_{\mathcal{C}^{\frac{1}{2}}_{[S,T]} L^{m}} \leq \epsilon(h,k)$ 
 and \eqref{eqconv-in-gamma-} to deduce~\ref{item53(1')}.

\smallskip

Proof of \ref{item53(2)}: We apply Corollary~\ref{cor4inc} with $\beta=\tilde\gamma$, $\lambda=1$, $\lambda_{1}=1$, $\lambda_{2}=\varepsilon$, $\kappa_{1}=K^{h,k}_{s}$, $\kappa_{2}=K_{s}$, $\kappa_{3}=K^{h,k}_{u}$ and $\kappa_{4}=K_{u}$: this yields
\begin{align*}
\| \delta A_{s,u,t} \|_{L^m} 
&\leq C \|b^k\|_{\mathcal{B}_p^{\tilde\gamma}}\, \|\EE^s[|K^{h,k}_{s}-K^{h,k}_{u}|^m]^{1/m}\|_{L^\infty}^{\varepsilon}\, \|K_{s}-K^{h,k}_{s}\|_{L^m}\, (t-u)^{1+H(\gamma-1-\varepsilon)} \\
    &\quad + C\|b^k\|_{\mathcal{B}_p^{\tilde\gamma}}\, \|K_{s}-K^{h,k}_{s} - K_{u} +K^{h,k}_{u}\|_{L^m}\,  (t-u)^{1+H(\gamma-1)} \\
 & \leq C \|b^k\|_{\mathcal{B}_p^{\tilde\gamma}}\, [K^{h,k}]_{\mathcal{C}^{\frac{1}{2}+H}_{[S,T]}L^{m,\infty}}^{\varepsilon}\, \|K_{s}-K^{h,k}_{s}\|_{L^m}\, (t-u)^{\frac{1}{2}+\frac{\varepsilon}{2}} \\
    &\quad + C\|b^k\|_{\mathcal{B}_p^{\tilde\gamma}}\, [K-K^{h,k}]_{\mathcal{C}^{\frac{1}{2}-\zeta}_{[S,T]}L^m}\,  (t-u)^{1-\zeta} .
\end{align*}
Since $\sup_{k} \|b^k\|_{\mathcal{B}_p^{\tilde\gamma}}$, $\sup_{(h,k) \in \mathcal{D}} [K^{h,k}]_{\mathcal{C}^{1/2+H}_{[S,T]}L^{m,\infty}}$ are finite and $\varepsilon < 1-2 \zeta$, we have obtained \ref{item53(2)}.
 
\smallskip

Proof of \ref{item53(3)}: The proof is similar to point~\ref{item51(3)} of Proposition~\ref{propbound-E1-SDE}.
\end{proof}

%%%%%%%%%%%
%%%%%%%%%

\subsection{Further regularisation properties of the discrete-time fBm}\label{subsecfurtherdtimefBm}

The aim here is to obtain Corollary~\ref{cornewbound-E2-general}, which is a quadrature bound between a fractional process and its discrete-time approximation based on regularisation properties from the previous sections.

\begin{lemma}\label{lemnewbound-E2-v0}
Let $\varepsilon \in (0,\frac{1}{2})$ and $m \in [2, \infty)$. There exists a constant $C>0$ such that for any $0 \leq S < T \leq 1$, any $\mathbb{R}^d$-valued $\mathcal{F}_S$-measurable random variable $\psi$, any $f \in \mathcal{C}^1_{b}(\mathbb{R}^d, \mathbb{R}^d) $, any $h>0$ and any
 $(s, t)\in \Delta_{S,T}$, we have
\begin{align*}%
\Big\| \int_s^t  f(\psi + B_r) - f(\psi+ B_{r_h}) \diff r  \Big\|_{L^{m}} \nonumber \leq C\, \|f\|_\infty\,  h^{\frac{1}{2}-\varepsilon}  (t-s)^{\frac{1}{2}+\frac{\varepsilon}{2}}  .
\end{align*}
\end{lemma}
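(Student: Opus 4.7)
The plan is to apply the stochastic sewing lemma (Lemma~\ref{lemSSL}) to the process $\mathcal{A}_t=\int_S^t [f(\psi+B_r)-f(\psi+B_{r_h})]\,dr$ with germ $A_{s,t}=\mathbb{E}^s[\mathcal{A}_t-\mathcal{A}_s]$. The first sewing condition $\mathbb{E}^s\delta A_{s,u,t}=0$ is automatic from the tower property, since $A_{s,u}$ is $\mathcal{F}_s$-measurable and $\mathbb{E}^s A_{u,t}=\mathbb{E}^s\int_u^t[\cdot]\,dr=A_{s,t}-A_{s,u}$. The convergence of Riemann sums $\sum_i A_{t_i,t_{i+1}}\to\mathcal{A}_t$ in probability follows from the continuity of $f$ and the H\"older regularity of $B$, just as in the proof of Lemma~\ref{lembound-Khn}. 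The entire argument then reduces to establishing the germ bound
\[
\|A_{s,t}\|_{L^m}\leq C\|f\|_\infty h^{\frac{1}{2}-\varepsilon}(t-s)^{\frac{1}{2}+\frac{\varepsilon}{2}},
\]
which, via the triangle inequality $\|\delta A_{s,u,t}\|_{L^m}\leq\|A_{s,t}\|_{L^m}+\|A_{s,u}\|_{L^m}+\|A_{u,t}\|_{L^m}$, will also yield the second sewing condition.

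I would prove the germ bound by splitting into two scales. When $t-s\leq 2h$, the trivial estimate $|A_{s,t}|\leq 2\|f\|_\infty(t-s)$ already matches the target, since $(t-s)^{1/2-\varepsilon/2}\leq (2h)^{1/2-\varepsilon/2}\leq C h^{1/2-\varepsilon}$. When $t-s>2h$, I would decompose $A_{s,t}$ into the integral over $[s,s+2h]$, bounded by $4\|f\|_\infty h$ which fits the target, and the integral over $[s+2h,t]$, on which $r_h\geq s+h$. For the latter, Lemma~\ref{lemreg-B}$(i)$ lets me write the integrand as $G_{\sigma_{s,r}^2}f(\psi+\mathbb{E}^sB_r)-G_{\sigma_{s,r_h}^2}f(\psi+\mathbb{E}^sB_{r_h})$, and I would control this difference using the semigroup identity $G_{\sigma_{s,r}^2}=G_{\sigma_{s,r}^2-\sigma_{s,r_h}^2}\circ G_{\sigma_{s,r_h}^2}$, the heat-kernel estimates from Lemma~\ref{lemreg-S}, the local nondeterminism bound \eqref{eqLND} for $\sigma_{s,r}^2-\sigma_{s,r_h}^2$, and a bound $\|\mathbb{E}^s(B_r-B_{r_h})\|_{L^m}\leq C h^H$ coming from Jensen's inequality and Gaussian equivalence of norms.

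The main obstacle is to reach the rate $h^{1/2-\varepsilon}(t-s)^{1/2+\varepsilon/2}$ rather than the weaker $h^H(t-s)^{1-H}$ that one naturally obtains by integrating pointwise $L^m$ bounds on the integrand. This requires exploiting the piecewise-constant structure of $r\mapsto r_h$ on grid intervals of length $h$: each block $\int_{[kh,(k+1)h]\cap[s,t]}[f(\psi+B_r)-f(\psi+B_{kh})]\,dr$ behaves like a near-martingale-difference bounded by $2\|f\|_\infty h$, and the sewing machinery turns the $N\sim(t-s)/h$ summands into the expected square-root gain $\|f\|_\infty\sqrt{(t-s)h}$. The arbitrarily small loss $\varepsilon$ then arises from applying Lemma~\ref{lemreg-S}$(iii)$ to handle the heat-kernel singularity near $r=s$ when $f$ is merely bounded.
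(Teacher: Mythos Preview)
Your overall framework matches the paper exactly: same germ $A_{s,t}=\mathbb{E}^s\int_s^t[f(\psi+B_r)-f(\psi+B_{r_h})]\,dr$, same vanishing of $\mathbb{E}^s\delta A_{s,u,t}$, same reduction of \eqref{sts2} to a germ bound via the triangle inequality, same split into $t-s\leq 2h$ and $t-s>2h$, and same further split of $[s,t]$ into $[s,s+2h]$ and $[s+2h,t]$. The trivial bounds on the short pieces are correct.

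The gap is in your treatment of $J:=\int_{s+2h}^t\mathbb{E}^s[f(\psi+B_r)-f(\psi+B_{r_h})]\,dr$. The estimate $\|\mathbb{E}^s(B_r-B_{r_h})\|_{L^m}\leq Ch^H$ is too weak: combined with $\|G_{\sigma_{s,r_h}^2}f\|_{\mathcal{C}^1}\leq C\|f\|_\infty(r_h-s)^{-H}$ it yields exactly the rate $h^H(t-s)^{1-H}$ you flagged as insufficient. Your proposed fix---that the blocks $\int_{kh}^{(k+1)h}[f(\psi+B_r)-f(\psi+B_{kh})]\,dr$ are near-martingale differences and that ``the sewing machinery turns the $N$ summands into a square-root gain''---does not work with the germ you chose. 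The quantity $A_{s,t}$ is already $\mathcal{F}_s$-measurable; the stochastic sewing lemma spends its martingale gain on $\mathcal{A}_t-\mathcal{A}_s-A_{s,t}$, not on $A_{s,t}$ itself, and the latter must be bounded directly by $Ch^{1/2-\varepsilon}(t-s)^{1/2+\varepsilon/2}$ at all scales.

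What is actually needed is a sharper \emph{pointwise} estimate on the conditioned increment: for $s<r_h<r$ one has $\|\mathbb{E}^s B_r-\mathbb{E}^s B_{r_h}\|_{L^m}\leq C(r-r_h)(r-s)^{H-1}$ (this is \cite[Prop.~3.6(v)]{butkovsky2021approximation}), together with $(r-s)^{2H}-(r_h-s)^{2H}\leq C(r-r_h)(r_h-s)^{2H-1}$ for the variance part. These replace your $h^H$ by $h\,(r-s)^{H-1}$, and after integrating against $(r-s)^{-H}$ on $[s+2h,t]$ one obtains $\|J\|_{L^m}\leq C\|f\|_\infty\, h\,\big(|\log(2h)|+|\log(t-s)|\big)$, which is then converted to $C\|f\|_\infty\,h^{1/2-\varepsilon}(t-s)^{1/2+\varepsilon/2}$ using $2h<t-s\leq 1$. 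The $\varepsilon$-loss thus comes from absorbing these logarithms, not from Lemma~\ref{lemreg-S}$(iii)$.
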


\begin{proof}
We will check the conditions in order to apply Lemma~\ref{lemSSL}. For $(s,t) \in \Delta_{S,T}$, let 
\begin{align*}
A_{s,t} = \mathbb{E}^s \int_s^t  f(\psi + B_r) - f(\psi + B_{r_h}) \diff r  ~~\mbox{and}~~
\mathcal{A}_t = \int_S^t f(\psi + B_r) - f(\psi+ B_{r_h}) \diff r .
\end{align*}
Let $u\in [s,t]$ and notice that $\mathbb{E}^s \delta A_{s,u,t}=0$, so \eqref{sts1} holds with $\Gamma_1=0$. We will prove that \eqref{sts2} holds with $\alpha_2=0$ and
$$ \Gamma_2 =  C \|f\|_\infty   h^{\frac{1}{2}-\varepsilon} .$$
\paragraph{The case $t-s\leq 2h$.} In this case we have
\begin{align}\label{eqAst<h}
|A_{s,t}| \leq \|f\|_\infty (t-s) \leq C \|f\|_\infty h^{\frac{1}{2}-\varepsilon} (t-s)^{\frac{1}{2}+\varepsilon} .
\end{align}

\paragraph{The case $t-s>2h$.} Here we split $A_{s,t}$ in two:
\begin{equation*}
A_{s,t} = \mathbb{E}^s \int_s^{s+2h}  f(\psi + B_r) - f(\psi + B_{r_h}) \diff r + \mathbb{E}^s \int_{s+2h}^t  f(\psi + B_r) - f(\psi + B_{r_h}) \diff r .
\end{equation*}
For the first part, we obtain 
\begin{equation*}
\Big|\mathbb{E}^s \int_s^{s+2h}  f(\psi + B_r) - f(\psi + B_{r_h}) \diff r \Big| \leq 4h\, \|f\|_\infty \leq C\, \|f\|_\infty h^{\frac{1}{2}-\varepsilon} (t-s)^{\frac{1}{2}+\varepsilon}.
\end{equation*}
Denote the second part by
\begin{align*}
J \colon= \int_{s+2h}^t \mathbb{E}^s [ f(\psi+ B_{r}) - f(\psi+ B_{r_h}) ] \diff r .
\end{align*}
From Lemma~\ref{lemreg-B}$(i)$ and \eqref{eqLND}, we have
\begin{align}\label{eqboundJ55}
J & =  \int_{s+2h}^t  \Big( G_{C_{1}(r-s)^{2H}}f(\psi+ \EE^s B_{r}) - G_{C_{1}(r_h-s)^{2H}}f(\psi+ \EE^s B_{r_h}) \Big) \diff r \\
& = \int_{s+2h}^t  \Big( G_{C_{1}(r-s)^{2H}}f(\psi+ \EE^s B_{r}) - G_{C_{1}(r_h-s)^{2H}}f(\psi+ \EE^s B_{r}) \Big) \diff r \nonumber \\ 
& \quad + \int_{s+2h}^t  \left( G_{C_{1}(r_h-s)^{2H}}f(\psi+ \EE^s B_{r}) - G_{C_{1}(r_h-s)^{2H}}f(\psi+ \EE^s B_{r_h}) \right)  \diff r \nonumber\\
& =\colon J_1 + J_2 .\nonumber
\end{align}
For $J_1$, we apply \cite[Proposition 3.7 (ii)]{butkovsky2021approximation} with $\beta=0$, $\delta=1$, $\alpha=0$  to get
\begin{align*}
\| J_1 \|_{L^m} &  \leq C\, \| f \|_\infty \int_{s+2h}^t \big( (r-s)^{2H} - (r_h-s)^{2H} \big) (r_h-s)^{-2H} \diff r .
\end{align*}
Now applying the inequalities $(r-s)^{2H} - (r_h-s)^{2H} \leq C (r-r_{h}) (r_h-s)^{2H-1}$ and $2 (r_h-s) \ge (r-s)$, it comes
\begin{align*}
\| J_1 \|_{L^m} &  \leq C\,  \| f \|_\infty \int_{s+2h}^t  (r-r_h) (r-s)^{2H-1} (r-s)^{-2H} \diff r \\
& \leq C\,  \| f \|_\infty h \int_{s+2h}^t (r-s)^{-1} \diff r \\
& \leq C\, \| f \|_\infty h  \left( |\log(2h)| + |\log(t-s)| \right) .
\end{align*}
Use again that $2h < t-s$ to get
\begin{align*}
\| J_1 \|_{L^m} &  \leq C\, \| f \|_\infty h^{\frac{1}{2}-\varepsilon} (t-s)^{\frac{1}{2}+\frac{\varepsilon}{2}} .
\end{align*}
As for $J_2$, we have
\begin{align*}
\|J_{2}\|_{L^m} \leq \int_{s+2h}^t  \|G_{C_{1}(r_h-s)^{2H}} f \|_{\mathcal{C}^1} \, \| \EE^s B_{r} - \EE^s B_{r_h}  \|_{L^m}  \diff r.
\end{align*}
In view of \cite[Proposition 3.7 (i)]{butkovsky2021approximation} applied with $\beta=1$, $\alpha=0$, \cite[Proposition 3.6 (v)]{butkovsky2021approximation} and using again that $2 (r_h-s) \ge (r-s)$, we get
\begin{align*}
\| J_2 \|_{L^m} & \leq C\, \| f \|_\infty \int_{s+2h}^t   \| \EE^s B_{r} - \EE^s B_{r_h} \|_{L^m} (r_h-s)^{-H} \diff r \\
& \leq C\, \| f \|_\infty \int_{s+2h}^t (r-r_h) (r-s)^{H-1} (r-s)^{-H}  \diff r \\
& \leq C\, \| f \|_\infty\,  h\,  \big( |\log(2h)| + |\log(t-s)| \big) \\
& \leq C\, \| f \|_\infty\,  h^{\frac{1}{2}-\varepsilon}\,  (t-s)^{\frac{1}{2}+\frac{\varepsilon}{2}} .
\end{align*}
Combining the bounds on $J_1$ and $J_2$, we deduce that
\begin{align*}
\| J \|_{L^m} \leq C\, \| f \|_\infty\,  h^{\frac{1}{2}-\varepsilon} \, (t-s)^{\frac{1}{2}+\frac{\varepsilon}{2}} .
\end{align*}
Hence for all $t-s>2h$,
\begin{align}\label{eqAst>h}
\| A_{s,t} \|_{L^m}  \leq C\, \| f \|_\infty\,  h^{\frac{1}{2}-\varepsilon} \, (t-s)^{\frac{1}{2}+\frac{\varepsilon}{2}} .
\end{align}

\medskip

Overall, combining \eqref{eqAst<h} and \eqref{eqAst>h}, we obtain that for all $s \leq t$,
\begin{align*}
\| A_{s,t}\|_{L^{m}} & \leq C  \|f\|_{\infty} h^{\frac{1}{2}-\varepsilon} (t-s)^{\frac{1}{2}+\frac{\varepsilon}{2}} .
\end{align*}
Thus for any $u\in [s,t]$, $\| \delta A_{s,u,t}\|_{L^{m}}  \leq \| A_{s,t}\|_{L^{m}}+\| A_{s,u}\|_{L^{m}}+\| A_{u,t}\|_{L^{m}}\\
\leq C  \|f\|_{\infty} h^{\frac{1}{2}-\varepsilon} (t-s)^{\frac{1}{2}+\frac{\varepsilon}{2}}$.
The power in $(t-s)$ is strictly larger than $1/2$, so \eqref{sts2} holds.

\paragraph{Convergence in probability.}
Finally, for a sequence $(\Pi_n)_{n \in 
\mathbb{N}}$ of partitions of $[S,t]$ with $\Pi_n=\{t_i^n\}_{i=1}^{N_n}$ and mesh size 
converging to zero, we have
\begin{align*}
\Big\| \mathcal{A}_t -  \sum_{i=1}^{N_{n}-1} A_{t_i^n,t_{i+1}^n} \Big\|_{L^1} & \leq \sum_{i=1}^{N_{n}-1} \int_{t_i^n}^{t_{i+1}^n} \mathbb{E}   \Big| f(\psi +B_r) - f(\psi+ B_{r_h})  -\mathbb{E}^{t_i^n}[ f(\psi +B_{r}) + f(\psi- B_{r_h})] \Big|  \diff r\\
& \leq \sum_{i=1}^{N_{n}-1} \int_{t_i^n}^{t_{i+1}^n} \mathbb{E} \Big| f(\psi +B_r) -\mathbb{E}^{t_i^n} f(\psi +B_{r}) \Big| \diff r \\
 &\quad + \sum_{i=1}^{N_{n}-1} \int_{t_i^n}^{t_{i+1}^n} \mathbb{E}\Big| f(\psi +B_{r_h}) -\mathbb{E}^{t_i^n} f(\psi +B_{r_h}) \Big| \diff r  \\
& =\colon I_1 + I_2 .
\end{align*}
In view of Lemma \ref{lemreg-B}$(iii)$, it comes that $I_1  \leq \sum_{i=1}^{N_{n}-1} \int_{t_i^n}^{t_{i+1}^n} \| f \|_{\mathcal{C}^1} (r-t_i^n)^H \diff r  \leq \| f \|_{\mathcal{C}^1} |\Pi_n|^H \, (t-S)$.
As for $I_{2}$, we refer to the paragraph \textbf{Convergence in probability} of the proof of Lemma \ref{lembound-Khn}, where we obtain 
\begin{align*}
I_2 & \leq \sum_{i=1}^{N_{n}-1} \int_{t_i^n}^{t_{i+1}^n} \| f \|_{\mathcal{C}^1} |\Pi_n|^H \diff r ,
\end{align*}
and therefore $\sum_{i=1}^{N_{n}-1} A_{t_{i}^n, t_{i+1}^n}$ converges in probability to $\mathcal{A}_{t}$ as $n \to +\infty$.
We can therefore apply Lemma~\ref{lemSSL} with $\varepsilon_1>0$ and $\varepsilon_2 = \varepsilon/2$ to conclude that $\| \mathcal{A}_t - \mathcal{A}_s \|_{L^m}  \leq \| \mathcal{A}_t - \mathcal{A}_s - A_{s,t} \|_{L^m} + \| A_{s,t} \|_{L^m}  \\
 \leq C\, \|f\|_\infty \, h^{\frac{1}{2}-\varepsilon} \, (t-s)^{\frac{1}{2}+\frac{\varepsilon}{2} } $.
\end{proof}

\begin{proposition}\label{propnewbound-E2}
Let $\varepsilon \in (0,\frac{1}{2})$ and $m \in [2, \infty)$.
There exists a constant $C>0$ such that for any $\mathbb{R}^d$-valued stochastic process $(\psi_{t})_{t\in [0,1]}$ adapted to $\mathbb{F}$, any $f \in \mathcal{C}^1_{b}(\mathbb{R}^d, \mathbb{R}^d) $, any $h \in (0,1)$ and any $(s, t)\in \Delta_{0,1}$, we have
\begin{align}\label{eqssl-on-on-SDE}
\Big\| \int_s^t f(\psi_r + B_r) - f(\psi_{r} + B_{r_h}) \diff r  \Big\|_{L^{m}} & \leq C \Big( \|f\|_\infty h^{\frac{1}{2}-\varepsilon} (t-s)^{\frac{1}{2}+\frac{\varepsilon}{2}}   + \|f \|_{\mathcal{C}^1}  [\psi]_{\mathcal{C}^{1}_{[0,1]} L^{\infty}}   h^{1-\varepsilon} (t-s)^{1+\frac{\varepsilon}{2}} \Big)  .
\end{align}
\end{proposition}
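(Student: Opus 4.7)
The strategy is to apply the stochastic sewing lemma (Lemma~\ref{lemSSL}) with the frozen germ
\begin{equation*}
A_{s,t} := \int_s^t [f(\psi_s + B_r) - f(\psi_s + B_{r_h})]\, dr,
\end{equation*}
whose natural limit is $\mathcal{A}_t := \int_0^t [f(\psi_r + B_r) - f(\psi_r + B_{r_h})]\, dr$. Since $\psi_s$ is $\mathcal{F}_s$-measurable, Lemma~\ref{lemnewbound-E2-v0} applied with starting time $s$ and $\mathcal{F}_s$-measurable data $\psi_s$ yields directly $\|A_{s,t}\|_{L^m} \leq C\|f\|_\infty h^{1/2-\varepsilon}(t-s)^{1/2+\varepsilon/2}$, producing the first term of the target estimate. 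The triangle inequality $\|\delta A_{s,u,t}\|_{L^m} \leq \|A_{s,t}\|_{L^m} + \|A_{s,u}\|_{L^m} + \|A_{u,t}\|_{L^m}$ then yields the second sewing bound with $\Gamma_2 = C\|f\|_\infty h^{1/2-\varepsilon}$ and exponent $1/2+\varepsilon/2$.

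The main task is to estimate $\|\mathbb{E}^s \delta A_{s,u,t}\|_{L^m}$. Set $\tilde f_{s,u}(x) := f(\psi_s + x) - f(\psi_u + x)$, which is $\mathcal{F}_u$-measurable with $\|\tilde f_{s,u}\|_\infty \leq \|f\|_{\mathcal{C}^1}|\psi_s - \psi_u|$, so that $\delta A_{s,u,t} = \int_u^t [\tilde f_{s,u}(B_r) - \tilde f_{s,u}(B_{r_h})]\, dr$. A direct Lipschitz estimate gives $\|\mathbb{E}^s \delta A_{s,u,t}\|_{L^m} \leq 2\|f\|_{\mathcal{C}^1}[\psi]_{\mathcal{C}^1_{[0,1]}L^\infty}(u-s)(t-u)$. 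To extract an $h$-factor when $t-u > 2h$, split the integral on $[u,u+2h]$ (handled by the Lipschitz bound) and on $[u+2h,t]$, where $r_h \geq u$. On the latter, the tower property together with Lemma~\ref{lemreg-B}(i) yields
\begin{equation*}
\mathbb{E}^u[\tilde f_{s,u}(B_r) - \tilde f_{s,u}(B_{r_h})] = G_{\sigma^2_{u,r}} \tilde f_{s,u}(\mathbb{E}^u B_r) - G_{\sigma^2_{u,r_h}} \tilde f_{s,u}(\mathbb{E}^u B_{r_h}).
\end{equation*}
Decompose this into (i) a time-increment of the semigroup $(G_{\sigma^2_{u,r}} - G_{\sigma^2_{u,r_h}})\tilde f_{s,u}$ evaluated at $\mathbb{E}^u B_r$, and (ii) a spatial increment of $G_{\sigma^2_{u,r_h}} \tilde f_{s,u}$ between $\mathbb{E}^u B_r$ and $\mathbb{E}^u B_{r_h}$. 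Using $|\sigma^2_{u,r} - \sigma^2_{u,r_h}| \leq Ch(r-u)^{2H-1}$ and $\sigma^2_{u,r_h} \geq C(r-u)^{2H}$ (the second because $2(r_h-u)\geq r-u$), combined with the pathwise bound $\|G_t \tilde f_{s,u}\|_{\mathcal{C}^1} \leq C\|\tilde f_{s,u}\|_\infty t^{-1/2}$ from Lemma~\ref{lemreg-S}(v) and the $L^m$-estimate $\|\mathbb{E}^u B_r - \mathbb{E}^u B_{r_h}\|_{L^m} \leq Ch(r-u)^{H-1}$ from \cite[Prop.~3.7]{butkovsky2021approximation}, each piece is bounded by $C\|\tilde f_{s,u}\|_\infty h(r-u)^{-1}$. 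Integrating over $[u+2h,t]$ produces a logarithmic factor $\log((t-u)/h)$ which, together with $\|\tilde f_{s,u}\|_\infty \leq \|f\|_{\mathcal{C}^1}[\psi]_{\mathcal{C}^1_{[0,1]}L^\infty}(u-s)$, yields $\|\mathbb{E}^s\delta A_{s,u,t}\|_{L^m} \leq C_\delta \|f\|_{\mathcal{C}^1}[\psi]_{\mathcal{C}^1}(u-s)h^{1-\delta}$ for every small $\delta > 0$.

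Interpolating the Lipschitz bound and the smoothing bound through $\min(h^{1-\delta},\,t-u) \leq h^{(1-\delta)(1-\alpha)}(t-u)^\alpha$ and using $(u-s)(t-u)^\alpha \leq C(t-s)^{1+\alpha}$, the choice $\delta = \alpha = \varepsilon/2$ gives the first sewing bound $\|\mathbb{E}^s \delta A_{s,u,t}\|_{L^m} \leq C\|f\|_{\mathcal{C}^1}[\psi]_{\mathcal{C}^1}h^{1-\varepsilon}(t-s)^{1+\varepsilon/2}$, that is, $\Gamma_1 = C\|f\|_{\mathcal{C}^1}[\psi]_{\mathcal{C}^1}h^{1-\varepsilon}$ with exponent $1+\varepsilon/2$. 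The convergence of Riemann sums $\sum_i A_{t_i,t_{i+1}}$ towards $\mathcal{A}_t-\mathcal{A}_0$ along any partition of mesh $|\Pi_k|\to 0$ follows from $\|\mathcal{A}_t-\mathcal{A}_0 - \sum_i A_{t_i,t_{i+1}}\|_{L^1} \leq 2\|f\|_{\mathcal{C}^1}[\psi]_{\mathcal{C}^1}\sum_i(t_{i+1}-t_i)^2 \leq 2\|f\|_{\mathcal{C}^1}[\psi]_{\mathcal{C}^1}|\Pi_k| \to 0$. Assembling everything through Lemma~\ref{lemSSL} produces \eqref{eqssl-on-on-SDE}. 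The principal technical challenge is the estimate on $\|\mathbb{E}^s \delta A_{s,u,t}\|_{L^m}$: one must carefully combine the Gaussian semigroup regularity with the fine scaling of the conditional increments of the fBm at the discretisation scale, while handling the fact that $\tilde f_{s,u}$ is a random ($\mathcal{F}_u$-measurable) function, so that the semigroup estimates are applied pathwise before the $L^m$-norm is taken.
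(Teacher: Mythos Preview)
Your proposal is correct and takes essentially the same approach as the paper: the same germ $A_{s,t}$, the same appeal to Lemma~\ref{lemnewbound-E2-v0} for $\|\delta A_{s,u,t}\|_{L^m}$, and the same semigroup decomposition (time-increment of $G$ plus spatial-increment via $\|G_t\tilde f\|_{\mathcal{C}^1}\leq C\|\tilde f\|_\infty t^{-1/2}$) on $[u+2h,t]$ for the conditional bound. The only cosmetic difference is that you package the two regimes via the interpolation $\min(h^{1-\delta},t-u)\leq h^{(1-\delta)(1-\alpha)}(t-u)^\alpha$, whereas the paper absorbs the logarithm directly into $h^{1-\varepsilon}(t-s)^{1+\varepsilon/2}$ in each case.
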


\begin{proof}
 Assume that $[\psi]_{\mathcal{C}^{1}_{[0,1]}L^\infty}<\infty$, otherwise 
\eqref{eqssl-on-on-SDE} trivially holds. 
We will check the conditions in order to apply Lemma~\ref{lemSSL} (with $q=m$). Let  and $0 \leq S < T \leq 1$. For any $(s,t) \in \Delta_{S,T}$, define 
\begin{align}\label{eqProp417}
A_{s,t} = \int_s^t f(\psi_s + B_r) - f(\psi_s+ B_{r_h}) \diff r   ~~\mbox{and}~~
\mathcal{A}_t = \int_S^t f(\psi_r+ B_r) - f(\psi_r+ B_{r_h}) \diff r  .
\end{align}
To show that \eqref{sts1} and \eqref{sts2} hold true with $\varepsilon_1= \varepsilon_2=\varepsilon/2 >0$ and $\alpha_1=\alpha_2=0$, we prove that there exists a constant $C>0$ independent of $s,t,S$ and $T$ such that for $u = (s+t)/2$,
\begin{enumerate}[label=(\roman*)]
\item \label{item54(1)} 
$\|\EE^{{s}} [\delta A_{s,u,t}]\|_{L^m} \leq C\, \|f \|_{\mathcal{C}^1}\, [\psi]_{\mathcal{C}^{1}_{[S,T]} L^{\infty}}\, h^{1-\varepsilon}\, ({t}-{s})^{1 + \frac{\varepsilon}{2}}$\text{;}

\item \label{item54(2)} 
$\| \delta A_{s,u,t}\|_{L^m} \leq C\, \|f\|_{\infty}\, h^{\frac{1}{2}-\varepsilon}\, ({t}-{s})^{\frac{1}{2}+\frac{\varepsilon}{2}}$\text{;}

\item \label{item54(3)} If \ref{item54(1)} and \ref{item54(2)} are satisfied, \eqref{sts3} gives the 
convergence in probability of $\sum_{i=1}^{N_n-1} A_{t^n_i,t^n_{i+1}}$ along any sequence of 
partitions $\Pi_n=\{t_i^n\}_{i=1}^{N_n}$ of $[S,t]$ with mesh converging to $0$. We will prove 
that the limit is the process $\mathcal{A}$ given in \eqref{eqProp417}.
\end{enumerate}

Assume for now that \ref{item54(1)}, \ref{item54(2)} and \ref{item54(3)} hold. Applying Lemma~\ref{lemSSL}, we obtain that
\begin{align*}
   \Big\| \int_s^t f(\psi_r + B_r) - f(\psi_{r} + B_{r_h}) \diff r  \Big\|_{L^{m}} 
   & \leq C \Big( \|f\|_\infty h^{\frac{1}{2}-\varepsilon} (t-s)^{\frac{1}{2}+\frac{\varepsilon}{2}}   + \|f \|_{\mathcal{C}^1}  [\psi]_{\mathcal{C}^{1}_{[S,T]} L^{\infty}}   h^{1-\varepsilon} (t-s)^{1+\frac{\varepsilon}{2}}   \Big) \\
    & \quad +\| A_{s,t}\|_{L^m}.
\end{align*}
We will see in the proof of \ref{item54(2)} that $\|A_{s,t}\|_{L^m} \leq C\, \|f\|_{\infty} h^{\frac{1}{2}-\varepsilon}\, ({t}-{s})^{\frac{1}{2}+\frac{\varepsilon}{2}}$. Then choosing $(s,t)=(S,T)$, we get \eqref{eqssl-on-on-SDE}, using that $[\psi]_{\mathcal{C}^{1}_{[S,T]} L^{\infty}}   \leq [\psi]_{\mathcal{C}^{1}_{[0,1]} L^{\infty}}  $.

We now check that the conditions \ref{item54(1)}, \ref{item54(2)} and \ref{item54(3)} actually hold.

\smallskip

Proof of \ref{item54(1)}: We have
\begin{align*}
\mathbb{E}^s \delta A_{s,u,t} & = \mathbb{E}^s \int_u^t f(\psi_s+ B_r) - f(\psi_s+ B_{r_h}) - f(\psi_u + B_r) + f(\psi_u+ B_{r_h}) \diff r  .
\end{align*}

\paragraph{The case $t-u \leq 2h$.}
In this case, using the Lipschitz norm of $f$, we have
\begin{align*}
| \mathbb{E}^s \delta A_{s,u,t} | & \leq 2 \|f\|_{\mathcal{C}^1} \int_u^t  |\psi_s-\psi_u|  \diff r. 
\end{align*}
Therefore using the inequality $(t-u) (u-s) \leq C\, h^{1-\varepsilon}\, (t-u)^{\varepsilon} (u-s)  \leq C\, (t-s)^{1+\varepsilon} h^{1-\varepsilon}$,
\begin{align*}
\| \mathbb{E}^s \delta A_{s,u,t} \|_{L^m} & \leq \|f\|_{\mathcal{C}^1}  [ \psi ]_{\mathcal{C}^1_{[S,T]} L^m}  (t-u) (u-s)   \leq C \|f\|_{\mathcal{C}^1}  [ \psi ]_{\mathcal{C}^1_{[S,T]} L^\infty}  (t-s)^{1+\varepsilon} h^{1-\varepsilon} .
\end{align*}

\paragraph{The case $t-u > 2h$.} We split the integral between $u$ and $u+2h$ and then between $u+2h$ and $t$ as follows:
\begin{align*}
\EE^s \delta A_{s,u,t} & = \int_u^{u+2h} \EE^s [ f(\psi_s+ B_r) - f(\psi_s+ B_{r_h}) - f(\psi_u + B_r) + f(\psi_u+ B_{r_h}) ] \diff r  \\ 
& \quad + \EE^s \int_{u+2h}^t \mathbb{E}^u [ f(\psi_s+ B_r) - f(\psi_s+ B_{r_h}) - f(\psi_u + B_r) + f(\psi_u+ B_{r_h}) ] \diff r 
\\
& =\colon J_1 + J_2  ,
\end{align*}
using the tower property of conditional expectation for $J_{2}$. 
For $J_1$, we obtain from the case $t-u\leq 2h$ that
\begin{align}\label{eqJ1Prop54}
\| J_{1}\|_{L^m} = \|\mathbb{E}^s \delta A_{s,u,u+2h}\|_{L^m}& \leq  C  \|f\|_{\mathcal{C}^1} [ \psi ]_{\mathcal{C}^1_{[S,T]} L^\infty} (t-s)^{1+\varepsilon} h^{1-\varepsilon} .
\end{align}
As for $J_2$, we use Lemma~\ref{lemreg-B}$(i)$ and \eqref{eqLND} to write
\begin{align*}
J_2 
& = \mathbb{E}^s \int_{u+2h}^t \big( G_{C_{1}(r-u)^{2H}} - G_{C_{1}(r_h-u)^{2H}} \big) \big( f(\psi_s+ \EE^u B_r) - f(\psi_u +\EE^u B_r) \big) \diff r \\ 
&\quad + \mathbb{E}^s \int_{u+2h}^t G_{C_{1}(r_h-u)^{2H}} \Big( f(\psi_s+ \EE^u B_r) - f(\psi_s+ \EE^u B_{r_h}) - f(\psi_u + \EE^u B_r) + f(\psi_u+ \EE^u B_{r_h}) \Big) \diff r \\
& =\colon J_{21} + J_{22} .
\end{align*}
For $J_{21}$, we apply \cite[Proposition 3.7 (ii)]{butkovsky2021approximation} with $\beta=0$, $\delta=1$, $\alpha=0$ and $f\equiv f(\psi_s+ \cdot) - f(\psi_u +\cdot)$ to get
\begin{align*}
\| J_{21} \|_{L^m} & \leq C \| \EE^s \| f(\psi_s + \cdot) - f(\psi_u+\cdot) \|_{\infty} \|_{L^m} \int_{u+2h}^t \big( (r-u)^{2H}-(r_h-u)^{2H} \big) (r_h-u)^{-2H} \diff r .
\end{align*}
Now using that $ \| \EE^s \| f(\psi_s + \cdot) - f(\psi_u+\cdot) \|_{\infty} \|_{L^m}
 \leq \| f \|_{\mathcal{C}^1}  \| \EE^s |\psi_s - \psi_u| \|_{L^m}
 \leq \| f \|_{\mathcal{C}^1}  \| \psi_s - \psi_u \|_{L^\infty}$ and 
applying the inequalities $(r-u)^{2H} - (r_h-u)^{2H} \leq C (r-r_{h}) (r_h-u)^{2H-1}$ and $2 (r_h-u) \ge (r-u)$, it comes
\begin{align*}
\| J_{21} \|_{L^m} & \leq C\, \| f \|_{\mathcal{C}^1} [ \psi ]_{\mathcal{C}^1_{[S,T]} L^\infty} |u-s| \int_{u+2h}^t (r-r_h) (r-u)^{2H-1} (r-u)^{-2H} \diff r \\
& \leq C\, \| f \|_{\mathcal{C}^1} [ \psi ]_{\mathcal{C}^1_{[S,T]} L^\infty}\, h \, (t-s)\,  \big( |\log(2h)| + |\log(t-u)| \big) .
\end{align*}
Since $t-u = (t-s)/2 > 2h$, one has
\begin{align}\label{eqJ21Prop54}
\| J_{21} \|_{L^m}  & \leq C\, \| f \|_{\mathcal{C}^1} [ \psi ]_{\mathcal{C}^1_{[S,T]} L^\infty} \, h^{1-\varepsilon} (t-s)^{1+\frac{\varepsilon}{2}} .
\end{align}
As for $J_{22}$, observe that
\begin{align*}
&\Big|G_{C_{1}(r_h-u)^{2H}} \Big( f(\psi_s+ \EE^u B_r) - f(\psi_s+ \EE^u B_{r_h}) - f(\psi_u + \EE^u B_r) + f(\psi_u+ \EE^u B_{r_h}) \Big)\Big| \\
&\quad \leq \|G_{C_{1}(r_h-u)^{2H}} f(\psi_{s}+\cdot) - G_{C_{1}(r_h-u)^{2H}}f(\psi_{u}+\cdot)\|_{\mathcal{C}^1} \, | \EE^u B_r - \EE^u B_{r_h} | \\
&\quad \leq C\, \| f(\psi_s + \cdot) - f(\psi_u+\cdot) \|_{\infty}\, (r_{h}-u)^{-H} \, | \EE^u B_r - \EE^u B_{r_h} |\\
&\quad \leq C\, \| f \|_{\mathcal{C}^1} \, (r_{h}-u)^{-H} \, | \psi_s - \psi_u |\, | \EE^u B_r - \EE^u B_{r_h} |,
\end{align*}
where we used \cite[Proposition 3.7 (i)]{butkovsky2021approximation} with $\beta=1$ and $\alpha=0$ in the penultimate inequality. Now in view of the previous inequality, using consecutively Jensen's inequality, \cite[Proposition 3.6 (v)]{butkovsky2021approximation}, that $2(r_{h}-u)\geq r-u$ and that $t-u = (t-s)/2 > 2h$, it comes
\begin{align}\label{eqJ22Prop54}
\| J_{22} \|_{L^m} & \leq C\, \| f \|_{\mathcal{C}^1} \int_{u+2h}^t \| \EE^s[   | \psi_s - \psi_u |\, | \EE^u (B_r - B_{r_h})|] \|_{L^m} (r_{h}-u)^{-H} \diff r \nonumber\\
& \leq C\, \| f \|_{\mathcal{C}^1}\, \|\psi_{s}-\psi_{u}\|_{L^\infty} \int_{u+2h}^t \|  \EE^u (B_r - B_{r_h}) \|_{L^m} (r_{h}-u)^{-H} \diff r \nonumber\\
& \leq C\, \| f \|_{\mathcal{C}^1}  [ \psi ]_{\mathcal{C}^1_{[S,T]} L^\infty} \, (u-s) \int_{u+2h}^t (r-r_h) (r-u)^{H-1} (r_{h}-u)^{-H} \diff r \nonumber \\
& \leq C\, \| f \|_{\mathcal{C}^1}  [ \psi ]_{\mathcal{C}^1_{[S,T]} L^\infty} \, h\, (t-s) \, \big( | \log(2h) | + | \log(t-u)| \big)\nonumber \\
& \leq C\, \| f \|_{\mathcal{C}^1} [ \psi ]_{\mathcal{C}^1_{[S,T]} L^\infty} \, h^{1-\varepsilon}\, (t-s)^{1+\frac{\varepsilon}{2}} .
\end{align}
In view of the inequalities \eqref{eqJ1Prop54}, \eqref{eqJ21Prop54} and \eqref{eqJ22Prop54}, we have finally
\begin{align*}%
\| \mathbb{E}^s \delta A_{s,u,t} \|_{L^{m}} & \leq C\, \| f \|_{\mathcal{C}^1} [ \psi ]_{\mathcal{C}^1_{[S,T]} L^\infty} h^{1-\varepsilon} (t-s)^{1+\frac{\varepsilon}{2}} .
\end{align*}

\medskip

Proof of \ref{item54(2)}: 
We write
\begin{equation*}
\| \delta A_{{s},u,{t}}\|_{L^m} \leq \| A_{{s},{t}}\|_{L^m} + \| A_{{s},u}\|_{L^m} + \| A_{u,{t}}\|_{L^m}
\end{equation*}
and we apply Lemma~\ref{lemnewbound-E2-v0} for each term in the right-hand side of the previous inequality, respectively for $\psi = \psi_{s}$, $\psi_{s}$ again and $\psi_{u}$. We thus have $\| A_{s,t} \|_{L^{m}} \leq  C \, \|f\|_\infty\, h^{\frac{1}{2}-\varepsilon} \, (t-s)^{\frac{1}{2}+\frac{\varepsilon}{2} }$.
Combining similar inequalities on $A_{{s},u}$ and $A_{u,{t}}$ yields $\| \delta A_{s,u,t}\|_{L^m} \leq C\, \|f\|_{\infty}\, h^{\frac{1}{2}-\varepsilon}\, ({t}-{s})^{\frac{1}{2}+\frac{\varepsilon}{2}}$.

\medskip
Proof of \ref{item54(3)}: The proof is similar to point~\ref{item51(3)} of Proposition~\ref{propbound-E1-SDE}.
\end{proof}

\begin{corollary}\label{cornewbound-E2-general}
Let $\varepsilon \in (0,\frac{1}{2})$ and $m \in [2, \infty)$. There exists a constant $C>0$ such that for any $\mathbb{R}^d$-valued $\mathbb{F}$-adapted process $(\psi_{t})_{t\in [0,1]}$, any $f \in \mathcal{C}^1_{b}(\mathbb{R}^d, \mathbb{R}^d) $, any $h \in (0,1)$, and any $(s, t)\in \Delta_{0,1}$, 
\begin{align*}%
\Big\| \int_s^t  f(\psi_r + B_r) - f(\psi_{r_h} + B_{r_h}) \diff r  \Big\|_{L^{m}} & \leq C \left( \|f\|_\infty \, h^{\frac{1}{2}-\varepsilon} (t-s)^{\frac{1}{2}+\frac{\varepsilon}{2}}  +   \|f \|_{\mathcal{C}^1}  [\psi]_{\mathcal{C}^{1}_{[0,1]} L^\infty}  \, h^{1-\varepsilon} (t-s) \right) .
\end{align*}
\end{corollary}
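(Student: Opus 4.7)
The plan is to reduce this corollary to Proposition~\ref{propnewbound-E2} by a simple pivot decomposition, since the proposition already handles the case where only the fBm argument is discretised (from $B_r$ to $B_{r_h}$) but keeps $\psi$ continuous. The remaining term, where $\psi_r$ is replaced by $\psi_{r_h}$ at frozen noise $B_{r_h}$, is then amenable to a purely pathwise Lipschitz bound.

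Concretely, I would first insert the pivot $f(\psi_r + B_{r_h})$ and write
\begin{align*}
\int_s^t \big(f(\psi_r + B_r) - f(\psi_{r_h} + B_{r_h})\big) \, dr &= \int_s^t \big(f(\psi_r + B_r) - f(\psi_r + B_{r_h})\big)\, dr \\
&\quad + \int_s^t \big(f(\psi_r + B_{r_h}) - f(\psi_{r_h} + B_{r_h})\big)\, dr.
\end{align*}
The first integral is controlled directly by Proposition~\ref{propnewbound-E2} applied to the $\mathbb{F}$-adapted process $\psi$, giving
\begin{equation*}
\Big\|\int_s^t \big(f(\psi_r + B_r) - f(\psi_r + B_{r_h})\big) \, dr\Big\|_{L^m} \leq C\, \|f\|_\infty\, h^{\frac{1}{2}-\varepsilon}(t-s)^{\frac{1}{2}+\frac{\varepsilon}{2}} + C\, \|f\|_{\mathcal{C}^1} [\psi]_{\mathcal{C}^1_{[0,1]}L^\infty}\, h^{1-\varepsilon}(t-s)^{1+\frac{\varepsilon}{2}},
\end{equation*}
which fits within the target bound once $(t-s)^{\varepsilon/2}\leq 1$ is absorbed.

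For the second integral, I would use the Lipschitz bound $|f(\psi_r + B_{r_h}) - f(\psi_{r_h} + B_{r_h})| \leq \|f\|_{\mathcal{C}^1} |\psi_r - \psi_{r_h}|$ together with $|\psi_r - \psi_{r_h}| \leq [\psi]_{\mathcal{C}^1_{[0,1]}L^\infty}\, (r - r_h) \leq [\psi]_{\mathcal{C}^1_{[0,1]}L^\infty}\, h$ almost surely, so that
\begin{equation*}
\Big\|\int_s^t \big(f(\psi_r + B_{r_h}) - f(\psi_{r_h} + B_{r_h})\big)\, dr\Big\|_{L^m} \leq \|f\|_{\mathcal{C}^1} [\psi]_{\mathcal{C}^1_{[0,1]}L^\infty}\, h\, (t-s).
\end{equation*}
Since $h\leq h^{1-\varepsilon}$ for $h\in(0,1)$, this is absorbed in the second term of the target upper bound. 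Summing the two estimates via the triangle inequality gives the claim.

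There is no real obstacle here; the only point to watch is that Proposition~\ref{propnewbound-E2} requires $\psi$ to be $\mathbb{F}$-adapted (which is already part of the hypothesis) and $\mathcal{C}^1_b$ regularity of $f$, both of which are satisfied.
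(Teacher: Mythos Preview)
Your proposal is correct and follows essentially the same argument as the paper: the same pivot $f(\psi_r+B_{r_h})$, the same application of Proposition~\ref{propnewbound-E2} to the $B_r$-vs-$B_{r_h}$ piece, and the same Lipschitz bound on the $\psi_r$-vs-$\psi_{r_h}$ piece. The only cosmetic difference is that the paper writes the Lipschitz estimate via $\|\psi_r-\psi_{r_h}\|_{L^m}\leq [\psi]_{\mathcal{C}^1_{[0,1]}L^\infty}\,h$ inside the integral rather than invoking an almost-sure pointwise bound.
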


\begin{proof}
Introducing the pivot term $f(\psi_{r} + B_{r_h})$, we have
\begin{align*}
\Big\| &\int_s^t f(\psi_r + B_r) - f(\psi_{r_h} + B_{r_h}) \diff r  \Big\|_{L^{m}}  \\
&\leq  \Big\| \int_s^t f(\psi_{r} + B_{r_h}) - f(\psi_{r_h} + B_{r_h}) \diff r  \Big\|_{L^{m}} + \Big\| \int_s^t  f(\psi_r + B_r) - f(\psi_{r} + B_{r_h})\diff r  \Big\|_{L^{m}}\\
&=\colon J_1 + J_2 .
\end{align*}
We first bound $J_1$ using the $\mathcal{C}^1$ norm of $f$: $J_1 \leq \|f\|_{\mathcal{C}^1} \int_s^t \| \psi_r - \psi_{r_h} \|_{L^m} \diff r \leq \|f\|_{\mathcal{C}^1} [ \psi ]_{\mathcal{C}^1_{[0,1]} L^\infty}\, h\, (t-s)$.
Then $J_2$ is bounded by Proposition \ref{propnewbound-E2}. Combining the two bounds, we get the desired result.
\end{proof}

\subsection{H\"older regularity of $E^{2,h,k}$}\label{subsecreg-E2hn}
The result below is a quadrature estimate that can be compared to \cite[Lemma 4.2]{butkovsky2021approximation}. We avoid here the exponential dependence on the sup-norm of the drift coming from the Girsanov approach, which is crucial to get a good rate of convergence of the scheme when the drift is unbounded or distributional. But we have instead a linear dependence on the $\mathcal{C}^1$-norm of the drift.

\begin{corollary}\label{cornewbound-E2}
Recall that the process $K^{h,k}$ was defined in \eqref{eqdef-Khn}. 
Let $\varepsilon \in (0,\frac{1}{2})$ and $m \in [2, \infty)$. 
There exists a constant $C>0$ such that for any $(s , t) \in \Delta_{0,1}$, any $h\in (0,1)$ and any $k\in \N$, we have
\begin{align*}%
 \Big\| \int_s^t  b^k(X_0+K^{h,k}_r + B_r) - b^k(X_0 +K_{r_h}^{h,k}+ B_{r_h})  \diff r  \Big\|_{L^{m}} \leq  C \Big( \|b^k\|_\infty h^{\frac{1}{2}-\varepsilon} +  \|b^k\|_{\mathcal{C}^1} \|b^k\|_\infty  h^{1-\varepsilon} \Big) (t-s)^{\frac{1}{2}} .
\end{align*}
\end{corollary}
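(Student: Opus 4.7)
This corollary should follow almost immediately from the general quadrature estimate of Corollary~\ref{cornewbound-E2-general}, so the plan is mostly a matter of a clean application plus one elementary observation on the Lipschitz regularity of $K^{h,n}$.

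First, I would apply Corollary~\ref{cornewbound-E2-general} with $f = b^n$, which belongs to $\mathcal{C}^1_b(\R^d,\R^d)$ because $b^n$ is smooth and bounded, and with the $\mathbb{F}$-adapted process $\psi_r \colon= X_0 + K^{h,n}_r$. This directly yields, for any $(s,t) \in \Delta_{0,1}$,
\begin{align*}
&\Big\| \int_s^t  b^n(X_0 + K^{h,n}_r + B_r) - b^n(X_0 + K^{h,n}_{r_h} + B_{r_h})  \diff r  \Big\|_{L^m} \\
&\qquad\leq C \Big( \|b^n\|_\infty \, h^{\frac{1}{2}-\varepsilon} \, (t-s)^{\frac{1}{2}+\frac{\varepsilon}{2}}  +   \|b^n\|_{\mathcal{C}^1} \, [X_0 + K^{h,n}]_{\mathcal{C}^{1}_{[0,1]} L^\infty} \, h^{1-\varepsilon} \, (t-s) \Big) .
\end{align*}

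Second, I would estimate the time-regularity seminorm of the integral process $K^{h,n}$. Since $X_0$ is constant in time, $[X_0 + K^{h,n}]_{\mathcal{C}^{1}_{[0,1]} L^\infty} = [K^{h,n}]_{\mathcal{C}^{1}_{[0,1]} L^\infty}$. Recalling from \eqref{eqdef-Khn} that $K^{h,n}_t = \int_0^t b^n(X^{h,n}_{r_h})\, dr$, the pointwise bound $|K^{h,n}_t - K^{h,n}_s| \leq \|b^n\|_\infty\, (t-s)$ holds almost surely, whence
\begin{align*}
[K^{h,n}]_{\mathcal{C}^{1}_{[0,1]} L^\infty} \leq \|b^n\|_\infty.
\end{align*}

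Finally, since $(s,t)\in \Delta_{0,1}$ we have $t-s\leq 1$, so $(t-s)^{1/2+\varepsilon/2} \leq (t-s)^{1/2}$ and $(t-s) \leq (t-s)^{1/2}$. Substituting the bound on $[K^{h,n}]_{\mathcal{C}^{1}_{[0,1]} L^\infty}$ into the above and absorbing the excess powers of $(t-s)$ into $(t-s)^{1/2}$ gives exactly the statement of the corollary. There is no substantive obstacle here: the serious probabilistic work (the stochastic sewing argument comparing $f(\psi_r+B_r)$ to $f(\psi_{r_h}+B_{r_h})$) has already been carried out in Proposition~\ref{propnewbound-E2} and Corollary~\ref{cornewbound-E2-general}; the only thing worth flagging is that the linear dependence on $\|b^n\|_\infty$ in the second term of the bound comes from $[K^{h,n}]_{\mathcal{C}^1 L^\infty}$ rather than any Girsanov weight, which is what allows unbounded or distributional drifts in the main theorem.
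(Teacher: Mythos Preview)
Your proposal is correct and follows essentially the same approach as the paper: apply Corollary~\ref{cornewbound-E2-general} with $f=b^n$ and $\psi = X_0 + K^{h,n}$, bound $[K^{h,n}]_{\mathcal{C}^1_{[0,1]}L^\infty}\leq \|b^n\|_\infty$ via the pointwise Lipschitz estimate on the integral, and absorb the extra powers of $(t-s)$ using $t-s\leq 1$. Your remark about the linear (rather than exponential) dependence on $\|b^n\|_\infty$ is also precisely the point the paper stresses.
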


\begin{proof}
Define the process $\psi_t = X_0 + K^{h,k}_t,\, t\in [0,1]$. 
Since $\psi$ is $\mathbb{F}$-adapted and $b^k \in \mathcal{C}^1_{b}(\mathbb{R}^d, \R^d)$, we apply Corollary \ref{cornewbound-E2-general} to get
\begin{align*}
& \Big\| \int_s^t  b^k(X_0+K^{h,k}_r + B_r) - b^k(X_0+K_{r_h}^{h,k}+ B_{r_h})  \diff r  \Big\|_{L^{m}} \\
&\quad \leq  C \Big( \|b^k\|_\infty h^{\frac{1}{2}-\varepsilon} (t-s)^{\frac{\varepsilon}{2}} +  \|b^k\|_{\mathcal{C}^1} [\psi]_{\mathcal{C}^1_{[0,1]}L^\infty}  h^{1-\varepsilon} (t-s)^{\frac{1}{2}}\Big) (t-s)^{\frac{1}{2}} \\
&\quad \leq  C \Big( \|b^k\|_\infty h^{\frac{1}{2}-\varepsilon}+  \|b^k\|_{\mathcal{C}^1} [\psi]_{\mathcal{C}^1_{[0,1]}L^\infty}  h^{1-\varepsilon}\Big) (t-s)^{\frac{1}{2}} .
\end{align*}
It remains to prove an upper bound on $[\psi]_{\mathcal{C}^1_{[0,1]} L^\infty}$. For $0 \leq u \leq v \leq 1$, we have
\begin{align*}
|\psi_v -\psi_u| = |\int_u^v b^k(X^{h,k}_{r_h}) \diff r | \leq |v-u| \|b^k\|_\infty  .
\end{align*}
Hence $[\psi]_{\mathcal{C}^1_{[0,1]} L^\infty} \leq \| b^k \|_\infty$.
\end{proof}

\section{Examples and simulations}\label{secsimulations}

In this section, we discuss examples of SDEs of the form \eqref{eqSDE} that can be treated by Theorem~\ref{thmmain-SDE}. For the rest of this section, we denote by $\kappa$ either the identity function or a smooth cut-off, i.e $\kappa$ is smooth on some compact interval and vanishes otherwise.

\subsection{Skew fractional Brownian motion}

The skew Brownian motion has been introduced in \cite{harrison1981skew} as the scaling limit of a random walk with biased probability transition upon touching $0$. The study of this process reveals several equivalent constructions \cite{Lejay}: one the one hand it can be interpreted as a one-dimensional stochastic process that behaves like a Brownian motion with a certain diffusion coefficient above the $x$-axis, and with another diffusion coefficient below the $x$-axis; on the other hand it is the solution of an SDE which involves its local time \cite{LeGall}. The latter yields the equation
% \AR{[je ne comprends pas: dans \cite{harrison1981skew} c'est une marche al\'eatoire dont la proba est modifi\'ee en $0$, mais je ne vois pas en quoi c'est ``a time-changed of the latter''. Je propose une alternative ci-dessous:]\\
%The skew Brownian motion has been introduced in \cite{harrison1981skew} as the scaling limit of a random walk with modified probability transition upon touching $0$. The study of this process reveals several equivalent constructions \cite{Lejay}: one the one hand it can be interpreted as a one-dimensional stochastic process that behaves like a Brownian motion with a certain diffusion coefficient above the $x$-axis, and with another diffusion coefficient below the $x$-axis; on the other hand it is the solution of an SDE which involves its local time \cite{LeGall}. This equation reads...}. The study of such process reveals several equivalent constructions \cite{Lejay}, and in particular in \cite{LeGall}, it is shown to be the solution of an SDE which involves its local time. This equation reads 
$dX_{t} = \alpha\, dL^X_{t} + dW_{t}$, for $\alpha\in [-1,1]$, where $L^X$ is the local time at $0$ of the solution. Formally we can write $dL^X_{t} =  \delta_{0}(X_{t})\, dt$. More generally in $\R^d$, although this is not the only possible approach (see e.g. \cite{Banos,Sole} for alternative definitions), we call skew fractional Brownian motion the solution to \eqref{eqSDE} when the drift is $\alpha\delta_{0}$, $\alpha\in \R^d$, that is
\begin{align}\label{eqskewfbm}
\diff X_t = \alpha \delta_0(X_t) \diff t + \diff B_t .
\end{align}
Since $\delta_{0} \in \mathcal{B}_p^{-d+d/p} \hookrightarrow \mathcal{B}_{\infty}^{-d}$ for any $p<\infty$, Corollary~\ref{corSP} gives strong existence and uniqueness under \eqref{eqcond-gamma-p-H}, which reads here $H \leq \frac{1}{2(d+1)}$. For the same values of $H$, the tamed Euler scheme converges by Corollary~\ref{corbn-choice}. 
\begin{remark}
Since $H \leq 1/(2(d+1)) < 1/d$, we know from \cite[Theorem 7.1]{Xiao} that the fBm visits any state $x$ (and in particular $0$) infinitely may times with positive probability. So the Equation~\eqref{eqskewfbm} is not simply reduced to $X=B$.\\
Instead of putting a Dirac measure in dimension $d >1$, one can also define a skew fBm on some set $S$ of dimension $d-1$ by considering a measure $\mu$ supported on $S$ (for example a Hausdorff measure). As before, we know from \cite[Theorem 7.1]{Xiao} that for $H < 1/d$, the fBm visits $S$ infinitely many times with positive probability. So the equation $d X_t = \mu(X_t) d t + d B_t$ is not reduced to $X=B$. Since signed measures also belong to $\mathcal{B}_p^{-d/d/p} \in \mathcal{B}_\infty^{-d}$ \cite[Proposition 2.39]{bahouri2011fourier}, we have again strong existence and uniqueness for $H \leq 1/(2(d+1))$.
\end{remark}
As an alternative construction of the skew fBm, we also propose to replace the local time by its approximation $b(x) = \frac{\alpha}{2\varepsilon} \mathds{1}_{(-\varepsilon, \varepsilon)}(x)$, $\varepsilon>0$.
Now we have $b$ bounded and so $b \in \mathcal{B}_\infty^0$, therefore one can take $H < 1/2$ and consider the SDE
\begin{align*}%
\diff X_t = \frac{\alpha}{(2\varepsilon)^d} \mathds{1}_{(-\varepsilon, \varepsilon)^d}(X) \diff t + \diff B_t  .
\end{align*}

In the Markovian case and dimension $d=1$, the skew Brownian motion is reflected on the $x$-axis when $\alpha=\pm 1$. Unlike the skew Brownian motion, the skew fBm (for $H\neq 1/2$) is not reflected for any value of $\alpha$, since $X-B$ is more regular than $B$ (see Theorem~\ref{thWP}). To construct reflected processes, a classical approach is to proceed by penalization, see e.g. \cite{LionsSznitman} in the Brownian case, and \cite{RTT} for rough differential equations. This consists in choosing a drift of the form $b_{\varepsilon}(x)=\frac{(x)_{-}}{\varepsilon}$ and letting $\varepsilon$ tend to $0$. Note that this approach also works for stochastic partial differential equations (SPDEs), see for instance  \cite{nualartpardoux,zambotti2003integration,Ludovic}. 
If we consider more specifically the stochastic heat equation, the solution in time observed at a fixed point in space behaves qualitatively like a fractional SDE with Hurst parameter $H=1/4$. Hence it is interesting to consider the following one-dimensional SDE: 
\begin{align}\label{eqpenalised}
\diff X_t^\varepsilon =  \frac{(X_t^\varepsilon)_{-}}{\varepsilon} \kappa(X_t^\varepsilon) \diff t + \diff B_t .
\end{align}
In \cite{RTT}, $\kappa$ was essentially the identity mapping and the distance between $X^\varepsilon$ and $X^0$ was quantified, with $X^0$ a reflected process. But then the drift is not in some Besov space. So in order to approximate \eqref{eqpenalised} numerically, we assume that $\kappa$ is a smooth cut-off to ensure that the drift is in some Besov space (e.g. $\mathcal{B}^1_\infty$), however it is no longer clear that $X^\varepsilon$ converges to a reflected process. We leave the question of numerical approximation of reflected fractional processes for future research.

\subsection{Applications in finance}

Some models of mathematical finance involve irregular drifts. 

First, consider a dividend paying firm, whose capital evolution can be modelled by the following one-dimensional SDE:
\begin{align*}%
\diff X_{t} = (r - \mathds{1}_{X_{t}\leq q}) \diff t + \sigma  \diff B_{t} ,
\end{align*}
swith an interest rate $r$, the volatility of the market $\sigma$ and some threshold $q$, see e.g \cite{MR1466852}. 
\begin{remark}
An extension of the previous SDE to dimension $d$ can be done by considering a threshold of the form $\mathds{1}_{x \in D}$ where $D$ is some domain in $\R^d$ or $\mathds{1}_{f(x) \leq q}$ where $f \colon\R^d \mapsto \R$.
\end{remark}
Numerical methods for bounded drifts with Brownian noise exist in the literature, see e.g. \cite{dareiotis2021quantifying,JourdainMenozzi}. When $B$ is a fractional Brownian motion with $H<1/2$, \cite{butkovsky2021approximation} provides a rate of convergence for the strong error (and Theorem~\ref{thmmain-SDE} provides the same rate of convergence).

\smallskip

Then, we propose a class of models which can be related heuristically to the rough Heston model introduced in \cite{ElEuchRosenbaum}. Recently, it was observed empirically that the volatility in some high-frequency financial markets has a very rough behaviour, in the sense that its trajectories have a very small H\"older exponent, close to $0.1$. 
Formally, the volatility component in the rough Heston model is described by a square root diffusion coefficient and a very rough driving noise. It would read
\begin{equation}\label{eqroughHeston}
dV_{t} = \kappa(V_{t}) \, dt + \sqrt{V_{t}}\, dB_{t},
\end{equation}
if we could make sense of this equation, the difficulty being both to define a stochastic integral when $H$ is small, and to ensure the positivity of the solution. Note that it is possible to define properly a rough Heston model, by means of Volterra equations, see \cite{ElEuchRosenbaum}. While there are some quantitative numerical approximation results for rough models (e.g. for the rough Bergomi model \cite{Gassiat,FrizEtAl}), the Euler scheme for the rough Heston model is only known to converge without rate~\cite{RTY}. 

However, we keep discussing \eqref{eqroughHeston} at a formal level, and consider the
Lamperti transform $L(x) = \sqrt{x}$. If a first order chain rule could hold for the solution of \eqref{eqroughHeston}, then as long as $V$ stays nonnegative, it would come that
\begin{align*}
L(V_{t}) = L(V_{0}) + \int_{0}^t \frac{\kappa(V_{s})}{2\sqrt{V_{s}}} \, ds + \frac{1}{2}B_{t} ,
\end{align*}
which for $\widetilde{V}_{t} \colon= L(V_{t}) = \sqrt{V_{t}}$ also reads
\begin{align}\label{eqVtilde}
\widetilde{V}_{t} = \widetilde{V}_{0} + \frac{1}{2} \int_{0}^t \frac{1}{\widetilde{V}_{s}} \kappa( \widetilde{V}_{s}^2) \, ds + \frac{1}{2} B_{t} .
\end{align}
Now we can make sense of the Equation \eqref{eqVtilde} for a slightly less singular drift, namely with drift $b(x) = \frac{\kappa(x^2)}{2 |x|^{1-\varepsilon}}$, since for a bump function $\kappa$ and for small $\varepsilon>0$, $b\in \mathcal{B}^{0}_{1} \in \mathcal{B}^{-1+d/p}_{p}$ (see \cite[Prop. 2.21]{bahouri2011fourier}). Hence Theorem~\ref{thmmain-SDE} can be applied whenever $H \leq 1/4$ and in view of Corollary~\ref{corbn-choice}, this yields a strong error of order $(1/4)^-$ (excluding the limit case).

\subsection{Fractional Bessel processes (in dimension $1$)}
 
Bessel processes \cite[Chapter XI]{revuz2013continuous} play an important role in probability theory and financial mathematics. 
As a generalization and motivated by the discussion in the previous subsection, we consider solutions to the following one-dimensional SDE:
\begin{align}\label{eqbessel}
\diff X_t = \frac{\kappa(X_{t})}{|X_t|^\alpha} \diff t + \diff B_t ,
\end{align}
for some $\alpha > 0$ and $H \in (0,1)$. When $H=1/2$, $\alpha=1$ and $\kappa$ is the identity function, we know that the solution always stays positive \cite[Chapter XI, Section 1]{revuz2013continuous}. By computations similar to \cite[Prop. 2.21]{bahouri2011fourier}, the drift $b(x)= \kappa(x) \, |x|^{-\alpha}$ belongs to $\mathcal{B}_\infty^{-\alpha}$ for $\alpha \in (0,1)$. In this case, \eqref{eqcond-gamma-p-H} reads $H < \frac{1}{2(1+\alpha)}$ and the rate of convergence of the tamed Euler scheme is close to $\frac{1}{2(1+\alpha)}$.

\subsection{Other examples in higher dimension}
A way to extend processes \eqref{eqbessel} to dimension $2$ could be the following:
\begin{align}\label{eqbessel2D}
\diff X_t^{i} = \frac{\kappa(X_{t})}{|X_t|^{\alpha}} \diff t + \diff B_t^i , \ i=1,2,
\end{align}
where $B^1$ and $B^2$ are two independent fBms and $\alpha>0$. By \cite[Proposition 2.21]{bahouri2011fourier}, one can prove that $x \mapsto b(x)= \frac{\kappa(x)}{| x|^{\alpha}}$ belongs to $\mathcal{B}^{-\alpha}_\infty$ for $\alpha \in (0,2)$. Therefore, the condition on $H$ becomes $H< \frac{1}{2(1+\alpha)}$. \\
Notice that the SDE \eqref{eqbessel2D} presents a singularity only at the point $(0,0)$. To create a singularity on both the $x$ and $y$-axes, one could also look at the following SDE
\begin{align*}%
\diff X_t^{i} = \frac{1}{(|X_t^1| \wedge |X_t^2|)^{\alpha}} \diff t + \diff B_t^i , \ i=1,2.
\end{align*}

\smallskip

Another example to consider in higher dimension is an SDE with discontinuous drift. For instance, let the drift be an indicator function of some domain $D$ as in \eqref{eqind2D}:
\begin{align}\label{eqind2D}
\diff X_t= \mathds{1}_{D}^{(d)}(X_t) \diff t + \diff B_t,
\end{align}
where $\mathds{1}_{D}^{(d)}$ denotes the vector-valued indicator function with identical entries $\mathds{1}_{D}$ on each component. 
We have $\mathds{1}_{D}^{(d)} \in \mathcal{B}_\infty^0$, and thus one can take $H < 1/2$.

\subsection{Simulations}\label{subsecsim}

In dimension $1$, 
we will simulate two SDEs. First the skew fractional Brownian motion \eqref{eqskewfbm} with $\alpha=1$. Then we simulate the SDE with bounded measurable drift $\mathds{1}_{\R_{+}} \in \mathcal{B}_\infty^0$, \emph{i.e.}
\begin{align}\label{eqsimubounded}
\diff X_t = \mathds{1}_{X_{t}>0} \diff t + \diff B_t .
\end{align}
We check that the rate of convergence obtained in Corollary~\ref{corbn-choice} holds numerically. The drifts are approximated by convolution with the Gaussian kernel, that is $b^k (x) = G_{1/k} b (x)$ (recall that $b^k$ satisfies the assumptions of Corollary \ref{corbn-choice} by Lemma \ref{lemreg-S}), and we fix the initial condition to $X_0=0$. For the skew fBm, this corresponds to 
$$b^k(x) = \sqrt{\frac{k}{2 \pi}} e^{-\frac{k x^2}{2}} ,$$
and for \eqref{eqsimubounded} this yields
$$b^k(x) = \sqrt{\frac{k}{2 \pi}} \int_0^x e^{-\frac{k y^2}{2}} \diff y .$$
As in Corollary~\ref{corbn-choice}, we fix the parameter $k$ of the mollifier in the tamed Euler scheme as $k=\lfloor h^{-\frac{1}{1-\gamma}}\rfloor$. The solutions of \eqref{eqskewfbm} and \eqref{eqsimubounded} do not have an explicit expression so we do not have an exact reference value. Instead, we first make a costly computation with very small time-step $h=2^{-7}\, 10^{-4}$ that will serve as reference value. 
In a second step, we compute the tamed Euler scheme for $h\in\{2^{-1}\, 10^{-4},\ 2^{-2}\, 10^{-4},\ 2^{-3}\, 10^{-4},\ 2^{-4}\, 10^{-4} \}$ and compare it to the reference value with the same noise and $h=2^{-7}\, 10^{-4}$. The result is averaged over $N=50000$ realisations of the noise to get an estimate of the strong error.

~

In dimension $2$, we simulate the $2$-dimensional SDE \eqref{eqind2D} with $X_0=0$ and with $D$ the quadrant defined by $D = \{x=(x_{1},x_{2})\colon~ x_{1} \ge 0, x_{2} \ge 0 \}$. The drift $b$ is approximated again by the convolution with the Gaussian kernel 
\begin{align*}
b^k(x) = G_{\frac{1}{k}} b (x) = \frac{k}{2 \pi } \int_{\mathbb{R}^2} e^{-\frac{k}{2}|x-y|^2} \mathds{1}_{D}(y)  \diff y.
\end{align*}

\smallskip

Recall that according to Corollary~\ref{corbn-choice}, the theoretical order of convergence is almost $1/2$ when the drift is bounded and almost $1/4$ when the drift is a Dirac distribution. We plot the logarithmic strong error with respect to the time-step $h$ for several values of the Hurst parameter, in Figure \ref{figsimbounded} for the Equations \eqref{eqsimubounded} and \eqref{eqind2D}, and in Figure \ref{figsimDirac} for the Equation \eqref{eqskewfbm}. We conclude that the empirical order of convergence is consistent with the theoretical one.

\begin{figure}[h!]
\centering
\includegraphics[scale=0.35]{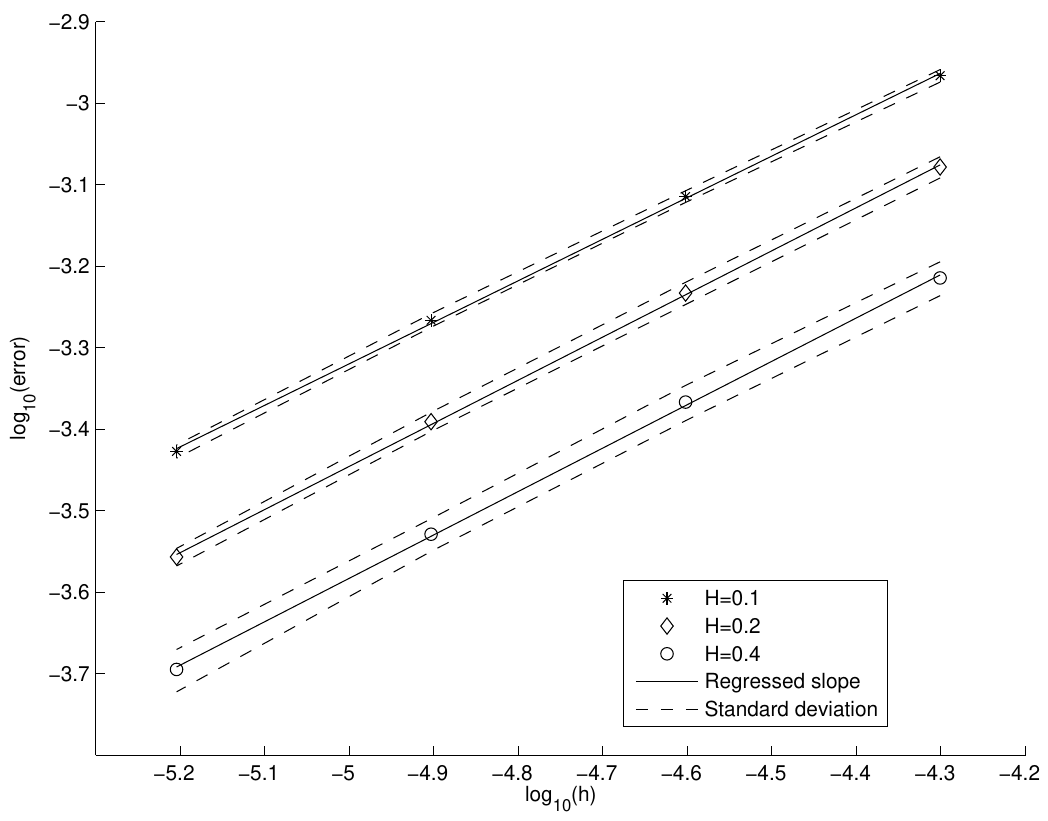} \,
\includegraphics[scale=0.35]{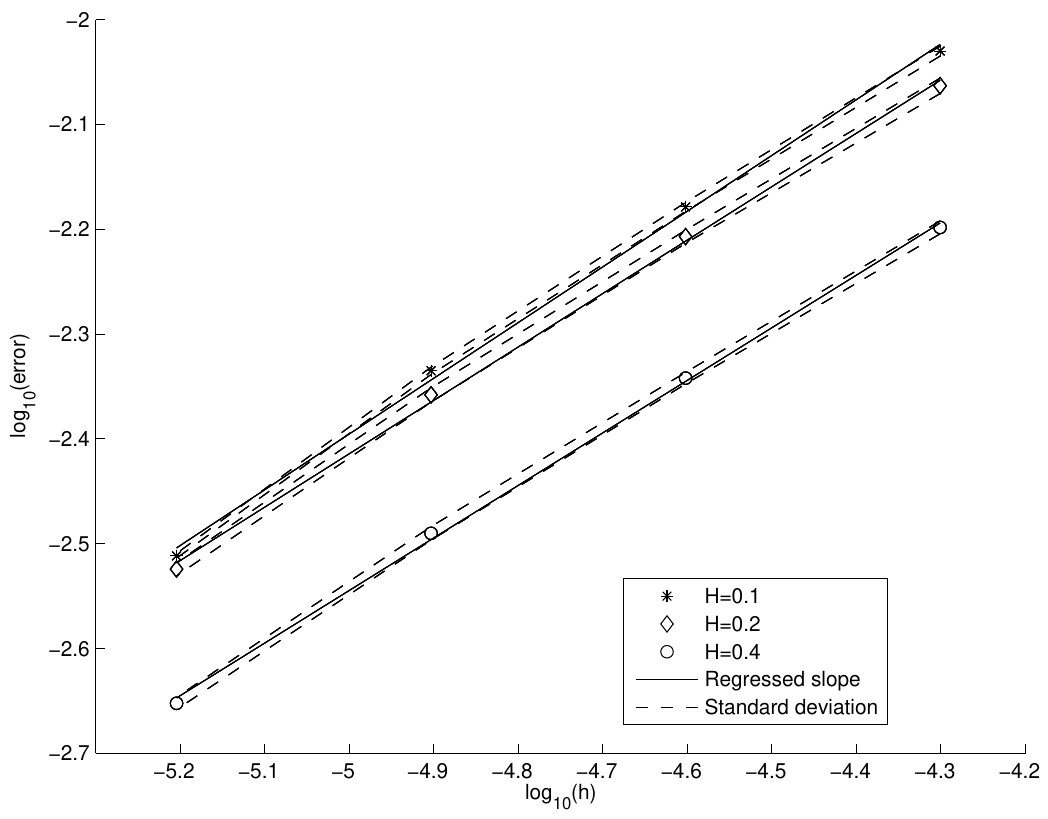}
\caption[Plot of the logarithm of the strong error for bounded drift]{Plot of the logarithm of the strong error ($y$-axis) against $h$ ($x$-axis) for a bounded drift. Left: Equation~\eqref{eqsimubounded} ($d=1$), the calculated slopes of the linear regression are approximately $0.51, 0.53, 0.53$.  - Right: Equation \eqref{eqind2D} $(d=2)$, the calculated slopes are approximately $0.53, 0.51, 0.50$. The standard deviation are plotted in dashed lines. For different values of $H<1/2$, and in both dimension $1$ and $2$, we observe that the numerical order of convergence is close to the theoretical order $1/2$.
}
\label{figsimbounded}
\end{figure}

\begin{figure}[h!]
\centering
\includegraphics[scale=0.35]{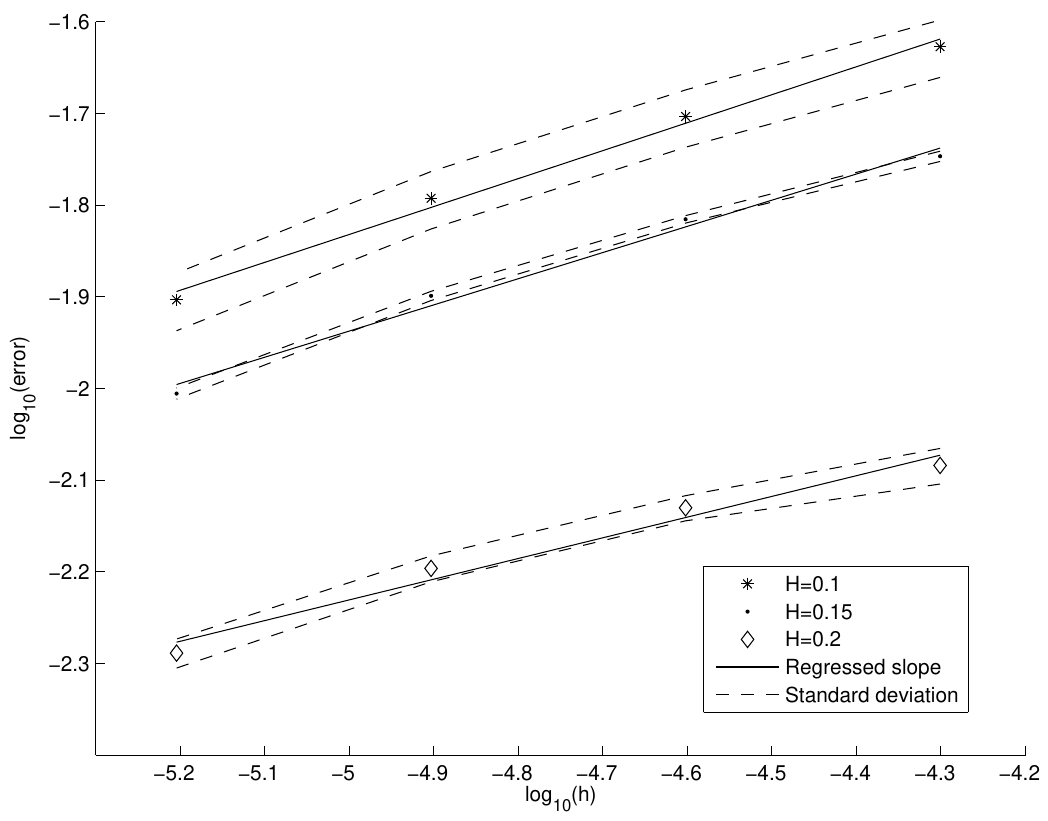}
\caption[Plot of the logarithm of the strong error for a Dirac drift]
{Plot of the logarithm of the strong error ($y$-axis) against $h$ ($x$-axis) for a Dirac drift in dimension $d=1$, namely Equation~\eqref{eqskewfbm}. For several values of $H<1/2$,  the calculated slopes of the linear regression are, from top to bottom, approximately $0.30, 0.29, 0.23$. The standard deviations are plotted in dashed lines. We observe that the numerical order of convergence is a bit far from the theoretical order $1/4$, hence further investigation (considering a larger number of realisations or smaller time-steps) is required to achieve a better rate. In particular, the difficulty comes from the strong singularity of the Dirac drift and a less precise simulation of the fBm as $H$ gets smaller, which could explain why the rate is further from $1/4$ when $H$ is smaller.
}
\label{figsimDirac}
\end{figure}

\paragraph{Acknowledgments.} The authors thank the anonymous referees for their comments which have led us to improve the presentation of this paper.

\begin{appendices}

\section{Regularisation properties of the fractional Brownian motion}\label{app:sec:regfBm}
We start by recalling an extension of the stochastic sewing Lemma \cite{le2020stochastic} with singular weights that was established in \cite{athreya2020well}. It is useful for the main estimates of Section~\ref{secstochastic-sewing}. 

For $\alpha \in[0,1)$ and $(s, t) \in \Delta_{S, T}$ we define $
\nu_{S, T}^{(\alpha)}(s, t)  := \int_{s}^{t}(r-S)^{-\alpha} d r$,
which satisfies
\begin{align}\label{eqbounds-nu}
\nu_{S, T}^{(\alpha)}(s, t) \leq C\, (t-s)^{1-\alpha} .
\end{align}

\begin{lemma}[\cite{athreya2020well}] \label{lemSSL}
Let $0\leq S<T$, $m \in [2, \infty)$ and $q \in [m,\infty]$. Let $(\Omega,\mathcal{F},\mathbb{F},\mathbb{P})$ be a filtered probability space. Let $A : \Delta_{S,T} \rightarrow L^m$ be such that $A_{s,t}$ is $\mathcal{F}_t$-measurable for any $(s,t) \in \Delta_{S,T}$. Assume that there exist constants $\Gamma_1,\Gamma_2\geq 0$, $\alpha_1 \in [0,1)$, $\alpha_2 \in [0, \frac{1}{2})$ and $\varepsilon_1,\varepsilon_2>0$ such that for any $(s,t) \in \Delta_{S,T}$ and $u = (s+t)/2$,
\begin{align}
    \|\EE^s[\delta A_{s,u,t}]\|_{L^q}&\leq \Gamma_1 \, (u-S)^{-\alpha_1} (t-s)^{1+\varepsilon_1},\label{sts1}\\
   \| \big( \EE^S | \delta A_{s,u,t} |^{m} \big)^{\frac{1}{m}} \|_{L^q} &\leq \Gamma_2 \, (u-S)^{-\alpha_2} (t-s)^{\frac{1}{2}+\varepsilon_2}. \label{sts2}
\end{align}
Then there exists a process $(\mathcal{A}_t)_{t\in [S,T]}$ such that, for any $t \in [S,T]$ 
and any sequence of partitions $\Pi_n=\{t_i^n\}_{i=0}^{N_n}$ of $[S,t]$ with mesh size going 
to zero, we have
\begin{align} \label{sts3}
    \mathcal{A}_t=\lim_{k\rightarrow \infty}\sum_{i=0}^{N_n-1}A_{t_i^n,t_{i+1}^n} \text{ in probability.} 
\end{align}

Moreover, there exists a constant $C=C(\varepsilon_1,\varepsilon_2,m, \alpha_1, \alpha_2)$ independent of $S,T$ such that for every $(s,t) \in \Delta_{S,T}$ we have
\begin{align*}
    \| \big( \EE^S | \mathcal{A}_t-\mathcal{A}_s-A_{s,t} |^m \big)^{\frac{1}{m}}  \|_{L^q} \leq C\, \Gamma_1 \nu_{S, T}^{(\alpha_{1}) }(s, t) (t-s)^{ \varepsilon_1} + C\, \Gamma_2 \Big( \nu_{S, T}^{(2 \alpha_{2})}(s, t)  \Big)^{\frac{1}{2}} (t-s)^{ \varepsilon_2},
\end{align*}
and
\begin{align*}
    \|\EE^S[\mathcal{A}_t-\mathcal{A}_s-A_{s,t}]\|_{L^{q}}\leq C\, \Gamma_1\, \nu_{S,T}^{(\alpha_1)}(s,t)\, (t-s)^{ \varepsilon_1}.
\end{align*}
\end{lemma}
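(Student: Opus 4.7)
The plan is to follow the standard stochastic-sewing strategy of Lê, adapted to accommodate the singular weights as in \cite{athreya2020well}. Concretely, for a fixed $t\in [S,T]$ I would introduce the dyadic partitions $\Pi_n = \{t^n_k := S+k(t-S)/2^n,\ k=0,\dots,2^n\}$ and the Riemann-type sums $\mathcal{A}^n_t := \sum_{k=0}^{2^n-1} A_{t^n_k,t^n_{k+1}}$. The goal is to prove that $(\mathcal{A}^n_t)_n$ is Cauchy in the norm $\|(\mathbb{E}^S|\cdot|^m)^{1/m}\|_{L^q}$, define $\mathcal{A}_t$ as the limit, and then show the limit does not depend on the chosen sequence of partitions. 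Using the telescoping identity
\begin{equation*}
\mathcal{A}^{n+1}_t - \mathcal{A}^n_t \;=\; -\sum_{k=0}^{2^n-1} \delta A_{t^n_k,\, u^n_k,\, t^n_{k+1}},
\qquad u^n_k := \tfrac{t^n_k+t^n_{k+1}}{2},
\end{equation*}
the whole analysis reduces to bounding this sum in terms of $\Gamma_1,\Gamma_2$ and the weights.

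The key device is to split each summand into its $\mathcal{F}_{t^n_k}$-conditional expectation plus a centered remainder. For the conditional-expectation part, the triangle inequality in $L^q$ combined with \eqref{sts1} yields an upper bound of the form $\Gamma_1 \sum_k (u^n_k-S)^{-\alpha_1}(t^n_{k+1}-t^n_k)^{1+\varepsilon_1}$; comparing this Riemann sum to $\int_s^t(r-S)^{-\alpha_1}dr$ produces the factor $\nu_{S,T}^{(\alpha_1)}(s,t)\,(t-s)^{\varepsilon_1}$ with the required factor $2^{-n\varepsilon_1}$ to guarantee summability over $n$. For the centered remainder, the summands $M^n_k := \delta A_{t^n_k,u^n_k,t^n_{k+1}} - \mathbb{E}^{t^n_k}[\delta A_{t^n_k,u^n_k,t^n_{k+1}}]$ form martingale differences with respect to $(\mathcal{F}_{t^n_k})_k$, and a conditional Burkholder--Davis--Gundy inequality (relative to $\mathbb{E}^S$) combined with \eqref{sts2} gives
\begin{equation*}
\Big\|\Big(\mathbb{E}^S\big|\sum_k M^n_k\big|^m\Big)^{\frac{1}{m}}\Big\|_{L^q}
\lesssim \Gamma_2\,\Big(\sum_k (u^n_k-S)^{-2\alpha_2}(t^n_{k+1}-t^n_k)^{1+2\varepsilon_2}\Big)^{\frac{1}{2}},
\end{equation*}
whose Riemann-sum comparison delivers the $\Gamma_2\,(\nu_{S,T}^{(2\alpha_2)}(s,t))^{1/2}(t-s)^{\varepsilon_2}$ bound with the geometric factor $2^{-n\varepsilon_2}$. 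Summing in $n$ and collapsing the telescope proves the Cauchy property and the quantitative estimate for $\mathcal{A}_t-\mathcal{A}_s - A_{s,t}$. The sharper control on $\mathbb{E}^S[\mathcal{A}_t-\mathcal{A}_s-A_{s,t}]$ is obtained by repeating the same decomposition but keeping only the drift contribution, since the centered remainders vanish under $\mathbb{E}^S$ (via the tower property applied step by step).

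It remains to show that the limit is independent of the partition sequence: given any sequence $\Pi_k$ with mesh going to $0$, one refines $\Pi_k$ to the next finer dyadic partition and applies the same two bounds on the gap, exploiting $\varepsilon_1,\varepsilon_2 > 0$ to obtain convergence of $\sum_i A_{t^k_i,t^k_{i+1}}$ to $\mathcal{A}_t$ in probability. The integrability constraints $\alpha_1<1$ and $\alpha_2<1/2$ are exactly what is needed for $\nu_{S,T}^{(\alpha_1)}$ and $\nu_{S,T}^{(2\alpha_2)}$ to be finite.

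The main obstacle is not the dyadic scheme itself but the correct handling of the \emph{nested} norm $\|(\mathbb{E}^S|\cdot|^m)^{1/m}\|_{L^q}$ in the BDG step: one needs a conditional square-function inequality that controls this mixed norm by $\|(\sum_k \mathbb{E}^S|M^n_k|^m)^{1/m}\|_{L^q}$ (or a $\ell^2$-variant thereof), and the singular weight $(u-S)^{-\alpha_2}$ must be absorbed inside $\mathbb{E}^S$ before one can sum over $k$. Getting this interchange right, while ensuring that $\alpha_2<1/2$ is both necessary and sufficient for the $\ell^2$-sum of weights to converge, is the delicate technical point; once it is in place the rest of the proof is a classical geometric-series argument.
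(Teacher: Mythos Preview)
The paper does not actually prove this lemma: it is stated as a recalled result from \cite{athreya2020well} (and ultimately \cite{le2020stochastic}), with no proof given in the present paper. Your sketch is essentially the standard stochastic-sewing argument of L\^{e} adapted to singular weights, which is precisely what one finds in those references, so there is nothing to compare against here.

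That said, one small caution on your sketch: in the BDG step for the centered martingale differences $M^n_k$, the correct square-function bound (for $m\geq 2$) is of $\ell^2$-type, i.e.\ one controls $\|(\mathbb{E}^S|\sum_k M^n_k|^m)^{1/m}\|_{L^q}$ by $(\sum_k \|(\mathbb{E}^S|M^n_k|^m)^{1/m}\|_{L^q}^2)^{1/2}$ via a conditional BDG/Minkowski argument, not by the $\ell^m$-sum you wrote. This is exactly why the exponent in \eqref{sts2} is $\tfrac12+\varepsilon_2$ and why the constraint is $\alpha_2<\tfrac12$ (so that $2\alpha_2<1$ and $\nu^{(2\alpha_2)}_{S,T}$ is finite). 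With that correction your outline matches the proof in \cite{athreya2020well}.
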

\begin{remark}\label{rmkcritical-sewing}
\begin{itemize}
\item In this paper, the stochastic sewing Lemma is applied for only two possible values of $q$, that is $q=\infty$ or $q=m$, and for the latter case we have $\| \big( \EE^S |  \cdot  |^m \big)^{\frac{1}{m}} \|_{L^m} = \| \cdot \|_{L^m}$. 
\item A critical-exponent version of the stochastic sewing Lemma, introduced in \cite[Theorem 4.5]{athreya2020well} is used in Proposition \ref{propbound-E1-SDE-critic}. Under the same notations and assumptions as Lemma \ref{lemSSL} (with $q=m$ and $\alpha_{1}=\alpha_{2}=0$), assuming moreover that there exist $\Gamma_3, \Gamma_4, \varepsilon_4 >0$ such that
\begin{align}\label{sts4}
\left\|\EE^s\left[\delta A_{s, u, t}\right]\right\|_{L^m} \leq \Gamma_3|t-s|+\Gamma_4|t-s|^{1+\varepsilon_4},
\end{align}
we get that for $(s,t) \in \Delta_{S,T}$,
\begin{align}\label{sts5}
\left\|\mathcal{A}_t-\mathcal{A}_s-A_{s, t}\right\|_{L^m} \leq C \Gamma_3\left(1+\left|\log \frac{\Gamma_1 T^{\varepsilon_1}}{\Gamma_3}\right|\right)(t-s)+C \Gamma_2(t-s)^{\frac{1}{2}+\varepsilon_2}+C \Gamma_4(t-s)^{1+\varepsilon_4}.
\end{align}
\end{itemize}
\end{remark}

In the rest of this section, we gather the proofs of Lemma~\ref{lem1streg} and Proposition~\ref{propregfBm}.

\subsection{ Proof of Lemma~\ref{lem1streg}}\label{app1streg}

We will apply Lemma~\ref{lemSSL} for $S\leq s \leq t \leq T$,
\begin{align*}
    \mathcal{A}_t \colon = \int_S^t f(B_r,\Xi) \, dr  ~~\text{and}~~ A_{s,t} \colon= \EE^s\left[\int_s^t f(B_r,\Xi) \, dr\right].
\end{align*}
Notice that we have $\EE^s[\delta A_{s,u,t}]=0$, so \eqref{sts1} trivially holds.
In order to establish \eqref{sts2}, we will show that for some $\varepsilon_{2}>0$,
\begin{align} \label{(4.8)-critic}
    \|\delta A_{s,u,t}\|_{L^q}\leq \Gamma_2 \, (t-s)^{\frac{1}{2}+\varepsilon_2} (u-S)^{-\frac{dH}{p}}.
\end{align} 
For $u=(s+t)/2$ we have by the triangle inequality, Jensen's inequality for 
conditional expectation and Lemma~\ref{lemreg-B}$(iv)$ (recall that $q \leq p$) that 
\begin{align*}
    \|\delta A_{s,u,t}\|_{L^q}&\leq \left\|\EE^s\left[\int_u^t f(B_r,\Xi) \, dr\right]\right\|_{L^q} + \left\|\EE^u\left[\int_u^t  f(B_r,\Xi) \, dr\right]\right\|_{L^q}\\
    &\leq \int_u^t\left(\|\EE^s f(B_r,\Xi) \|_{L^q}+\|\EE^u f(B_r,\Xi) \|_{L^q}\right) dr\\
    &\leq 2 \int_u^t \|\EE^u f(B_r,\Xi) \|_{L^q} \, dr \\
    &\leq C\int_u^t \|\| f(\cdot,\Xi) \|_{\mathcal{B}_p^{\beta}}\|_{{L^q}} (r-u)^{H\beta} (u-S)^{-\frac{d}{2p}} (r-S)^{d\frac{1-2H}{2p}} \, dr\\ &\leq C \, \| \| f(\cdot,\Xi) \|_{\mathcal{B}_p^{\beta}}\|_{L^q}\,  (t-u)^{1+H\beta} (u-S)^{-\frac{dH}{p}},
\end{align*}
where we used $r-S \leq 2(u-S)$ for the last inequality. Hence, we 
have \eqref{(4.8)-critic} for $\varepsilon_2=1/2+H \beta >0$.

Let $t\in [S,T]$. Let $(\Pi_n)_{n \in \mathbb{N}}$ be a 
sequence of partitions of $[S,t]$ with mesh size converging to zero. For each $n$, denote $\Pi_n=\{t_i^n\}_{i=1}^{N_n}$. By 
Lemma~\ref{lemreg-B}$(iii)$ we have that
\begin{align*}
    \left\|\mathcal{A}_t-\sum_{i=1}^{N_{n}-1} A_{t^n_i,t^n_{i+1}} \right\|_{L^1}&\leq\sum_{i=1}^{N_{n}-1} \int_{t^n_i}^{t^n_{i+1}}\|f(B_r,\Xi)-\EE^{t_i^n} f(B_r,\Xi) \|_{L^1} dr\\
    &\leq C \, \|\|f(\cdot,\Xi) \|_{\mathcal{C}^1}\|_{L^2}\, (t-S)\, |\Pi_n|^H \longrightarrow 0.
\end{align*}
Hence \eqref{sts3} holds true.

Applying Lemma~\ref{lemSSL}, we get
\begin{align}\label{eqlem34-ssl-goal}
   \| \big( \EE^S | \mathcal{A}_t-\mathcal{A}_s|^m \big)^{\frac{1}{m}}  \|_{L^q} &\leq \|A_{s,t}\|_{L^q} + C \, \| \| f(\cdot,\Xi) \|_{\mathcal{B}_p^{\beta}}\|_{L^q}\, \Big( \nu_{S,T}^{(\frac{2dH}{p})}(s,t)\Big)^{\frac{1}{2}} (t-s)^{\frac{1}{2} + H\beta}.
\end{align}
To bound $\|A_{s,t}\|_{L^q}$, notice that
\begin{align}\label{eqboundAst}
   \|A_{s,t}\|_{L^q} &=\Big\|\EE^s \int_s^t f(B_r,\Xi) \, dr \Big\|_{L^q} \leq \int_s^t \|\EE^s f(B_r,\Xi) \|_{L^q} dr .
\end{align}

Hence to obtain \eqref{eqregulINT}, use Lemma~\ref{lemreg-B}$(ii)$ and recall that $1+H(\beta-\frac{d}{p}) >0$ to get that
\begin{align*}
   \|A_{s,t}\|_{L^q}  &\leq C  \int_s^t \| \| f(\cdot,\Xi) \|_{\mathcal{B}_p^{\beta}}\|_{L^q}\, (r-s)^{H(\beta-\frac{d}{p})} dr \\
    &\leq C \, \| \| f(\cdot,\Xi) \|_{\mathcal{B}_p^{\beta}}\|_{L^q}\,  (t-s)^{1+H(\beta-\frac{d}{p})} .
\end{align*}
Plugging the previous inequality in \eqref{eqlem34-ssl-goal} with \eqref{eqbounds-nu} yields \eqref{eqregulINT}.

\subsection{ Proof of Proposition~\ref{propregfBm}}\label{appregfBm}

Let $(S,T)\in \Delta_{0,1}$. For $(s,t) \in \Delta_{S,T}$, let
\begin{align} \label{eqA}
     \mathcal{A}_{t} \colon= \int_S^{t} f(B_r+\psi_r) \, dr  \quad \text{and}\quad A_{s,t} \colon = \int_{s}^{t} f(B_r+\psi_{s}) \, dr.
\end{align}

\paragraph{Proof of \ref{item3.5(a)}.}

Assume that $[\psi]_{\mathcal{C}^\tau_{[S,T]}L^{m,q}}<\infty$, otherwise 
\eqref{eq3.5a} trivially holds. 
In the last part of this proof, we will check that the conditions in order to apply Lemma~\ref{lemSSL} are verified. Namely, we will show that \eqref{sts1} and \eqref{sts2} hold true with $\varepsilon_1=H(\beta-1)+\tau>0$, $\alpha_1=0$ and $\varepsilon_2=1/2+H\beta >0$, $\alpha_2=0$, so that there exists a constant $C>0$ independent of $S,T,s,t$ such that
\begin{enumerate}[label=(\roman*$_{a}$)]
\item \label{en:(1a)} $\|\EE^{s} [\delta A_{s,u,t}]\|_{L^q}\leq C\, \|f\|_{\mathcal{B}_\infty^\beta}\, [\psi]_{\mathcal{C}^\tau_{[S,T]}L^{m,q}}\, (t-s)^{1+H(\beta-1)+\tau}$\text{;} %

\item \label{en:(2a)} $\Big\| \big( \EE^S |\delta A_{s,u,t} |^m \big)^{\frac{1}{m}} \Big\|_{L^q }  \leq C\, \| f \|_{\mathcal{B}_\infty^\beta}  (t-s)^{1+H\beta}$\text{;} 

\item \label{en:(3a)} If \ref{en:(1a)} and \ref{en:(2a)} are satisfied, \eqref{sts3} gives the 
convergence in probability of $\sum_{i=1}^{N_n-1} A_{t^n_i,t^n_{i+1}}$ along any sequence of 
partitions $\Pi_n=\{t_i^n\}_{i=1}^{N_n}$ of $[S,t]$ with mesh converging to $0$. We will prove 
that the limit is the process $\mathcal{A}$ given in \eqref{eqA}.
\end{enumerate}

Assume for now that \ref{en:(1a)}, \ref{en:(2a)} and \ref{en:(3a)} hold. Applying Lemma~\ref{lemSSL} and recalling \eqref{eqbounds-nu}, we obtain that
\begin{align*}
   \Big\| \Big( \EE^S \Big | \int_{s}^{t} f(B_r+\psi_r) \, dr \Big|^m \Big)^{\frac{1}{m}} \Big\|_{L^q} 
   &\leq C \, \|f\|_{\mathcal{B}_\infty^\beta}\, [\psi]_{\mathcal{C}^\tau_{[S,T]}L^{m,q}}\, (t-s)^{1+H(\beta-1)+\tau} \\ 
   & \quad + C\, \| f \|_{\mathcal{B}_\infty^\beta}  (t-s)^{1+H\beta} + \big\| \big( \EE^S | A_{s,t} |^m \big)^{\frac{1}{m}} \big\|_{L^q}.
\end{align*}

To bound $\big\| \big( \EE^S | A_{s,t} |^m \big)^{\frac{1}{m}} \big\|_{L^q}$, we apply Lemma~\ref{lem1streg} with $p=\infty$ to $\Xi = \psi_s$.
As $f$ is smooth and bounded, the first assumption of Lemma~\ref{lem1streg} is verified. By Lemma~\ref{lembesov-spaces}$(i)$, $ \|f(\cdot + \psi_s) \|_{\mathcal{B}^{\beta}_{\infty}} \leq \| f\|_{\mathcal{B}^{\beta}_{\infty}}$, hence the second assumption of Lemma~\ref{lem1streg} is verified. It follows by Lemma~\ref{lem1streg} that 
\begin{align}\label{eqAst}
\big\| \big( \EE^S | A_{s,t} |^m \big)^{\frac{1}{m}} \big\|_{L^q}  &\leq C\,  \| \|f(\psi_s+\cdot) \|_{\mathcal{B}_\infty^{\beta}} \|_{L^q} \,  (t-s)^{1+H\beta}\ \nonumber\\
&\leq C\, \|f\|_{\mathcal{B}^{\beta}_\infty} \, (t-s)^{1+H\beta}.
\end{align}
Then, 
we get \eqref{eq3.5a}.

\paragraph{Proof of \ref{item3.5(b)}.}
Assume that $[\psi]_{\mathcal{C}^{1/2+H}_{[S,T]}L^m}<\infty$, otherwise 
\eqref{eq3.5b} trivially holds. In the last part of this proof, 
we will check that the conditions in order to apply the stochastic sewing Lemma with critical exponent \cite[Theorem 4.5]{athreya2020well} are verified.
 Namely, we will show that  for some $\varepsilon \in (0,1)$ small enough (specified later), \eqref{sts1}, \eqref{sts2} and \eqref{sts4} hold true with $\varepsilon_1=H >0$, $\alpha_1=0$, $\varepsilon_2=\varepsilon/2>0$, $\alpha_2=0$ and $\Gamma_{4}=0$,
 so that there exists a constant $C>0$ independent of $s,t,S$ and $T$ such that
\begin{enumerate}[label=(\roman*$_{b}$)]
\item\label{en:(1b)fbm-app}  $\|\EE^{s} [\delta A_{s,u,t}]\|_{L^m}\leq C\, \| f \|_{\mathcal{B}_p^{\beta+1}}\, [\psi]_{\mathcal{C}^{\frac{1}{2}+H}_{[S,T]}L^m}\, (t-s)^{1+H}$\text{;}

\myitem{(i$^\prime_{b}$)}\label{en:(1'b)fbm-app} $\| \EE^s [\delta A_{s,u,t}]\|_{L^m}\leq C\, \|f\|_{\mathcal{B}_p^{\beta}}\, [\psi]_{\mathcal{C}^{\frac{1}{2}+H}_{[S,T]}L^m}\, (t-s)$ \text{;}

\item\label{en:(2b)fbm-app} $\| \delta A_{s,u,t} \|_{L^m} \leq C\, \| f \|_{\mathcal{B}_p^{\beta}}  \Big( 1+  [\psi]_{\mathcal{C}^{\frac{1}{2}+H}_{[S,T]} L^m} \Big) (t-s)^{\frac{1}{2}+ \frac{\varepsilon}{2}}$\text{;} 

\item\label{en:(3b)fbm-app}  If \ref{en:(1b)fbm-app} and \ref{en:(2b)fbm-app} are satisfied, \eqref{sts3} gives the 
convergence in probability of $\sum_{i=1}^{N_n-1} A_{t^n_i,t^n_{i+1}}$ along any sequence of 
partitions $\Pi_n=\{t_i^n\}_{i=1}^{N_n}$ of $[S,t]$ with mesh converging to $0$. We will prove 
that the limit is the process $\mathcal{A}$ given in \eqref{eqA}.
\end{enumerate}
Assume for now that \ref{en:(1b)fbm-app}, \ref{en:(1'b)fbm-app}, \ref{en:(2b)fbm-app} and \ref{en:(3b)fbm-app} hold. Applying \cite[Theorem 4.5]{athreya2020well}, we obtain that
\begin{align*}
    \Big\| \int_{s}^{t} f(B_r+\psi_r) \, dr \Big\|_{L^m} &\leq C\, \|f\|_{\mathcal{B}_p^{\beta}}\, [\psi]_{\mathcal{C}^{\frac{1}{2}+H}_{[S,T]}L^m} \Big(1+\left| \log\frac{\| f \|_{\mathcal{B}_p^{\beta+1}} t^{\varepsilon_1}}{\| f \|_{\mathcal{B}_p^{\beta}}} \right| \Big)\, (t-s)  \\ 
    & \quad  + C\, \| f \|_{\mathcal{B}_p^{\beta}}  \Big( 1+  [\psi]_{\mathcal{C}^{\frac{1}{2}+H}_{[S,T]} L^m} \Big) (t-s)^{\frac{1}{2} + \frac{\varepsilon}{2}} \\  
    & \quad + \| A_{s,t}\|_{L^m} .
\end{align*}
To bound $\| A_{s,t} \|_{L^m}$, we use again Lemma~\ref{lem1streg} with $\beta-\frac{d}{p}=-\frac{1}{2H}$ and proceed as for \eqref{eqAst}  to get $\|A_{s,t} \|_{L^m}\leq C \|f\|_{\mathcal{B}^{\beta}_p} (t-s)^{\frac{1}{2}}$. Hence 
 we get \eqref{eq3.5b}.

~

We now check that the conditions \ref{en:(1a)}, \ref{en:(2a)}, \ref{en:(3a)}, \ref{en:(1b)fbm-app}, \ref{en:(1'b)fbm-app}, \ref{en:(2b)fbm-app} and \ref{en:(3b)fbm-app} actually hold.

\smallskip

\paragraph{Proof of \ref{en:(1a)}, \ref{en:(1b)fbm-app} and \ref{en:(1'b)fbm-app}.} 
Let $p\in [m,+\infty]$ which will be fixed at the end of this paragraph.
For 
$(s,t) \in \Delta_{S,T}$, 
we have
    $\delta A_{s,u,t}= \int_u^{t} f(B_{r}+\psi_{s})-f(B_{r}+\psi_u) \, dr$. 
Hence, by the tower property of conditional expectation and Fubini's 
Theorem, we get 
\begin{align*}
    |\EE^{s} \delta A_{s,u,t}|&=
    \Big|\EE^{s} \int_u^{t} \EE^u [f(B_{r}+\psi_{s})-f(B_{r}+\psi_u)]
    \, dr \Big|.
\end{align*}
Now using Lemma~\ref{lemreg-B}$(ii)$ with the $\mathcal{F}_{u}$-measurable variable $\Xi=(\psi_{s},\psi_{u})$
 and using again Fubini's Theorem, we obtain that for $\lambda\in [0,1]$, %
\begin{align}\label{eqLqBound(1a)}
    \Big\|\EE^{s} \int_u^{t} \EE^u [f(B_{r}+\psi_{s})-f(B_{r}+\psi_u)]
    \, dr \Big\|_{L^q}
    &\leq \int_u^{t} \| \mathbb{E}^{s} \|
    f(\cdot+\psi_{s})-f(\cdot+\psi_u)
    \|_{\mathcal{B}_p^{\beta-\lambda}}  \|_{L^q} \, (r-u)^{H(\beta-\lambda-\frac{d}{p})}  \, dr \nonumber \\
    &\leq C \|f\|_{\mathcal{B}_p^{\beta-\lambda+1}} \, \| \mathbb{E}^{s} |\psi_u-\psi_{s}|\|_{L^q} \int_u^{t}  (r-u)^{H(\beta-\lambda-\frac{d}{p})} \, dr.
\end{align}
By the conditional Jensen inequality and \eqref{eqdefbracket} (recall that $m\leq q$), we have
\begin{align}\label{eqconditionalIncPsi}
\left\|\mathbb{E}^{s} \left|\psi_u-\psi_{s} \right|\right\|_{L^q} \leq [\psi]_{\mathcal{C}_{[s, t]}^\tau L^{m, q}}\, (u-s)^\tau .
\end{align}

In the sub-critical case, choosing $\lambda=1$ and $p=\infty$ in \eqref{eqLqBound(1a)}, we get \ref{en:(1a)}.

In the limit case, let $\tau=1/2+H$ and $p<\infty$. For $q=m$, we get from \eqref{eqLqBound(1a)} and \eqref{eqconditionalIncPsi} that
\begin{align*} 
\| \EE^{s} \delta A_{s,u,t} \|_{L^m} &  \leq C\, \|f\|_{\mathcal{B}_p^{\beta-\lambda+1}} \, [\psi]_{\mathcal{C}^\tau_{[S,T]} L^m} \, (t-s)^{1+H(\beta-\lambda-\frac{d}{p})+\tau} .
\end{align*}
Choosing $\lambda=1$ in the previous inequality, we get \ref{en:(1'b)fbm-app}. While choosing $\lambda=0$ yields \ref{en:(1b)fbm-app}.

\paragraph{Proof of \ref{en:(2a)}.} We write
\begin{align*}
\left\| \big( \EE^S |\delta A_{s,u,t} |^m \big)^{\frac{1}{m}} \right\|_{L^q }  \leq \left\| \big( \EE^S |\delta A_{s,t} |^m \big)^{\frac{1}{m}} \right\|_{L^q} 
+ \left\| \big( \EE^S |\delta A_{s,u} |^m \big)^{\frac{1}{m}} \right\|_{L^q } + \left\| \big( \EE^S |\delta A_{u,t} |^m \big)^{\frac{1}{m}} \right\|_{L^q }  ,
\end{align*}
Recall that we already obtained a bound on $\| ( \EE^S |\delta A_{s,t} |^m )^{1/m} \|_{L^q} $ in \eqref{eqAst}. We obtain similar bounds for $\| ( \EE^S |\delta A_{s,u} |^m )^{1/m} \|_{L^q}$ and $\| (\EE^S |\delta A_{u,t} |^m )^{1/m} \|_{L^q} $, which yields
\begin{align*}
\left\| \big( \EE^S |\delta A_{s,u,t}|^m \big)^{\frac{1}{m}} \right\|_{L^q} 
&\leq C\, \| f \|_{\mathcal{B}_\infty^\beta} \Big( (t-s)^{1+H\beta}  +(u-s)^{1+H\beta}  +(t-u)^{1+H\beta}   \Big) \\
& \leq C\, \| f \|_{\mathcal{B}_\infty^\beta}  (t-s)^{1+H\beta} .
\end{align*}

\paragraph{Proof of \ref{en:(2b)fbm-app}.} We choose $\varepsilon$ such that $\beta-\varepsilon > -1/2H$ and $\beta-\varepsilon-d/p > -1/H$. We apply now Lemma \ref{lem1streg} with $\beta\equiv\beta-\varepsilon$ and $\Xi = (\psi_{s}, \psi_u)$. As $f$ is smooth and bounded, the first assumption of Lemma~\ref{lem1streg} is verified. 
By Lemma~\ref{lembesov-spaces}$(i)$, $ \|f(\cdot + \psi_{s})-f(\cdot + \psi_{u}) \|_{\mathcal{B}^{\beta-\varepsilon}_{p}} \leq 2\| f\|_{\mathcal{B}^{\beta-\varepsilon}_{p}}$, hence the second assumption of Lemma~\ref{lem1streg} is verified. It follows by Lemma~\ref{lem1streg} and Lemma~\ref{lembesov-spaces}$(ii)$ that 
\begin{align*}
\| \delta A_{s,u,t} \|_{L^m}  & \leq C\, \| \| f(\cdot + \psi_{s})-(\cdot + \psi_{u}) \|_{\mathcal{B}_p^{\beta-\varepsilon}} \|_{L^m} (t-u)^{1+H(\beta-\varepsilon-\frac{d}{p})} \\
& \leq C\, \| f \|_{\mathcal{B}_p^{\beta}} \, \| |\psi_{s}-\psi_u |^{\varepsilon} \|_{L^m} (t-u)^{1+H(\beta-\varepsilon-\frac{d}{p})} .
\end{align*}
Hence by Jensen's inequality,
\begin{align*}
\| \delta A_{s,u,t} \|_{L^m}  & \leq C\, \| f \|_{\mathcal{B}_p^{\beta}} \| \psi_{s}-\psi_u \|_{L^m}^\varepsilon (t-u)^{1+H(\beta-\varepsilon-\frac{d}{p})} \\
& \leq  C\, \| f \|_{\mathcal{B}_p^{\beta}}  [\psi]_{\mathcal{C}^{\frac{1}{2}+H}_{[S,T]} L^m}^\varepsilon (t-s)^{1+H(\beta-\frac{d}{p}) + \frac{\varepsilon}{2}} \\
& \leq C\, \| f \|_{\mathcal{B}_p^{\beta}}  \Big( 1+  [\psi]_{\mathcal{C}^{\frac{1}{2}+H}_{[S,T]} L^m} \Big) (t-s)^{1+H(\beta-\frac{d}{p}) + \frac{\varepsilon}{2}} . 
\end{align*}

\paragraph{Proof of \ref{en:(3a)} and \ref{en:(3b)fbm-app}.} For a sequence $(\Pi_n)_{n \in 
\mathbb{N}}$ of partitions of $[S,t]$ with $\Pi_n=\{t_i^n\}_{i=1}^{N_n}$ and mesh size 
converging to zero, we have
\begin{align*}
    \Big\|\mathcal{A}_{t}-\sum_{i=1}^{N_{n}-1} A_{t_i^n,t_{i+1}^n}\Big\|_{L^m}
    &\leq \sum_{i=1}^{N_{n}-1} \int_{t_i^n}^{t_{i+1}^n} \| f(B_r + \psi_r)-f(B_r+\psi_{t_i^n})\|_{L^m} \, dr\\
    &\leq \sum_{i=1}^{N_{n}-1} \int_{t_i^n}^{t_{i+1}^n} \|f\|_{\mathcal{C}^1}\|\psi_r-\psi_{t_i^n}\|_{L^m} \, dr\\
    &\leq C\,  \|f\|_{\mathcal{C}^1} \,  |\Pi_n|^{\tau \wedge (\frac{1}{2}+H)}\,  [\psi]_{\mathcal{C}^{\tau \wedge (\frac{1}{2}+H)}_{[s,t]}L^m} \, \,
    \underset{n \rightarrow \infty}{\longrightarrow} 0.
\end{align*}

\section{Proof of the critical Gr\"onwall-type lemma: Lemma \ref{lemrate-critical}}\label{app:sec:gronwall1}

Assume without any loss of generality that $C_1, C_2 >0$. Let $\delta \in (0,\frac{1}{2} e^{-C_2})$. There exists $a > 1$ such that $e^{-C_2 \frac{a\log a}{a-1}} =  e^{-C_2}-\delta$. Let $\varepsilon \equiv \varepsilon(C_2,\delta) \in (0,1)$ small enough such that $e^{-C_2 \frac{a\log a}{(a-1)(1-\varepsilon)}} \geq e^{-C_2}-2 \delta$. Denote also $\alpha := 1- e^{-C_2 \frac{a\log a}{(a-1)(1-\varepsilon)}}$. Now for $\eta \in (0,1)$ and $f\in \mathcal{R}(\eta,\ell,C_{1},C_{2})$, define the following increasing sequence: $t_0=0$ and for $k \in \N$, 
\begin{equation*}
t_{k+1} = 
\begin{cases}
\inf \{t>t_{k}\colon ~  \eta+ \|f_t\| \ge a^{k+1}\, \eta \} \wedge 1 & \text{ if } t_{k}<1,\\
1 & \text{ if } t_{k}=1 ,
\end{cases}
\end{equation*}
with the convention that $\inf \emptyset = +\infty$. In view of \eqref{eqboundIncf} and of the boundedness of $f$, the mapping $t\mapsto \|f_t\|$ is continuous. In particular, and by definition of the sequence $(t_k)$, we deduce that for any $k$,
\begin{align*}
\|f\|_{L^\infty_{[0,t_{k}]}E} \leq a^k\eta - \eta \leq a^k \eta.
\end{align*}
Let 
\begin{equation}\label{eqdefN}
N = \left\lfloor - \alpha \frac{\log(\eta)}{\log(a)} \right\rfloor -1 ,
\end{equation}
and let $\bar{\eta}_{0} \equiv \bar{\eta}_{0}(C_{2},\delta)$ be such that for $\eta<\bar{\eta}_{0}$, we have $N\geq 1$.
We shall prove the following statement:
\begin{align}\label{eqstatementEpsBar}
\mbox{There exists $\bar{\eta} \equiv \bar{\eta}(C_1,C_{2},\ell, \delta)$ such that for any } \eta<\bar{\eta} \mbox{ and } f\in \mathcal{R}(\eta,\ell,C_{1},C_{2}), ~ t_{N+1}=1.
\end{align}
Observe that if \eqref{eqstatementEpsBar} holds true, then for $\eta<\bar{\eta}$ and $f\in \mathcal{R}(\eta,\ell,C_{1},C_{2})$, we have
\begin{align*}
 \| f \|_{L^\infty_{[0,1]} E} \leq  a^{N+1} \eta 
\leq  \eta^{1-\alpha} \leq \eta^{(e^{-C_2}-2\delta)} 
\end{align*}
and the lemma is proven.

Let us now prove the statement \eqref{eqstatementEpsBar}.  Fix $\eta < \bar{\eta}_{0}$ and $f\in \mathcal{R}(\eta,\ell,C_{1},C_{2})$. 
Let $N_{0} = \inf\left\{ k\in \N\colon~ t_{k+1} = 1 \right\}$.
We aim to prove that $N_{0}\leq N$, so that we will have indeed $t_{N+1} = 1$. First, if $N_{0} = 0$, we have obviously that $N\geq N_{0}$. Assume now that $N_{0}\geq 1$.
 For any $k \leq N_{0}-1$, we have $\eta+\|f_{t_{k}}\| = a^k\, \eta$ and $\eta+\|f_{t_{k+1}}\| = a^{k+1}\, \eta$, which implies that $ \|f_{t_{k+1}} - f_{t_{k}} \|\geq (a^{k+1}-a^k) \eta$. Consider two cases:

\begin{enumerate}[label=(\arabic*), labelwidth=!, labelindent=\parindent]

\item If $t_{k+1}-t_k \leq \ell$, then one can apply \eqref{eqboundIncf}, using $\| f_{t_{k+1}} \| = \| f \|_{L^\infty_{[t_k,t_{k+1}]} E}$, to get
\begin{align*}
(a^{k+1}-a^k) \eta &\leq C_1 \, a^{k+1} \eta \, (t_{k+1}-t_{k})^{\frac{1}{2}} + C_2 \, a^{k+1}\, \eta \, |\log (a^{k+1} \, \eta)| \, (t_{k+1}-t_{k}).
\end{align*} 

\item If $t_{k+1}-t_{k} > \ell$, then we split the interval  $[t_k, t_{k+1}]$ into at most $ \lfloor \frac{1}{\ell} \rfloor$ intervals of size $\ell$, that we denote $[\beta^{j}_k, \beta^{j+1}_k]$. We can apply \eqref{eqboundIncf} over each such interval to get that
\begin{align*}
(a^{k+1}-a^k) \eta & \leq C_1 \, \sum_{j=0}^{\lfloor \frac{1}{\ell} \rfloor} (\|f \|_{L^\infty_{[\beta^{j}_k,\beta^{j+1}_k]}E} + \eta) \,  (\beta^{j+1}_k-\beta^{j}_k)^{\frac{1}{2}} \\ 
&\quad + C_2 \sum_{j=0}^{\lfloor \frac{1}{\ell} \rfloor} \, (\|f \|_{L^\infty_{[\beta^{j}_k,\beta^{j+1}_k]}E} + \eta) \, |\log (\|f \|_{L^\infty_{[\beta^{j}_k,\beta^{j+1}_k]}E} + \eta) | \,(\beta^{j+1}_k-\beta^{j}_k).
\end{align*} 
By definition of the sequence $(t_k)_{k\in \N}$, we know that $\eta + \|f \|_{L^\infty_{[\beta^{j}_k,\beta^{j+1}_k]}E} \leq a^{k+1} \eta$. Moreover for $k\leq N$, there is $a^{k+1} \eta \leq a^{N+1} \eta \leq \eta^{e^{-C_2}-\delta}$. Therefore, for $\bar{\eta}_1 \equiv \bar{\eta}_1 (C_{2},\delta) \leq \bar{\eta}_{0}$ small enough, we have that $a^{k+1} \eta \leq e^{-1}$ for any $\eta < \bar{\eta}_1$ and $k\leq N$. Since the mapping $x\mapsto x\, |\log x|$ is nondecreasing on the interval $[0,e^{-1}]$, we get that 
$$ 
 (\|f \|_{L^\infty_{[\beta^{j}_k,\beta^{j+1}_k]}E}+\eta) \, |\log (\|f \|_{L^\infty_{[\beta^{j}_k,\beta^{j+1}_k]}E}+\eta) | \leq a^{k+1} \eta\, | \log ( a^{k+1} \eta) |.
$$
Then, by Cauchy-Schwarz on the first term, we write
\begin{align*}
(a^{k+1}-a^k) \eta & \leq C_1 \,  a^{k+1} \eta \,  \sqrt{\frac{1}{\ell}} (t_{k+1}-t_k)^{\frac{1}{2}} + C_2 \, a^{k+1}\, \eta \, |\log (a^{k+1} \eta) | \,(t_{k+1}-t_k).
\end{align*}

\end{enumerate}
Hence in both cases ($t_{k+1}-t_k \leq \ell$ or $t_{k+1}-t_k > \ell$), for any $\eta < \bar{\eta}_1$, there is
\begin{align*}
1 &\leq \frac{C_1}{\sqrt{\ell}} \frac{a}{a-1}\,  (t_{k+1}-t_{k})^{\frac{1}{2}} + C_2 \frac{a}{a-1} \,  |\log (a^{k+1} \eta) | \, (t_{k+1}-t_{k}) .
\end{align*}
Notice that the polynomial $C_2  \frac{a}{a-1} \, |\log (a^{k+1} \eta) | \, X^2 +  \frac{C_1}{\sqrt{\ell}} \frac{a}{a-1} \, X -1$ has only one non-negative root. Thus
\begin{align*}
(t_{k+1}-t_{k})^{\frac{1}{2}} \geq \frac{-\frac{a}{a-1} \frac{C_1}{\sqrt{\ell}} + \sqrt{(\frac{a}{a-1})^2 \frac{C_1^2}{\ell} + 4 \frac{C_2 a}{a-1} |\log (a^{k+1} \eta) | }}{2 \frac{C_2 a}{a-1} |\log (a^{k+1} \eta) |  } .
\end{align*}
Then we have
\begin{align*}
(t_{k+1}-t_{k}) \geq \frac{2\left( \frac{C_1 a}{\sqrt{\ell}(a-1)}\right)^2  +  4 \frac{C_2 a}{a-1} |\log (a^{k+1} \eta) | -\frac{2 C_1 a}{\sqrt{\ell}(a-1)} \sqrt{\left(\frac{C_1 a}{\sqrt{\ell}(a-1)}\right)^2 + 4 \frac{C_2 a}{a-1} |\log (a^{k+1} \eta) | }}{ \left( \frac{2 C_2 a}{a-1}\right)^2 |\log (a^{k+1} \eta) |^2  } .
\end{align*}
Using the notation $C_a = \frac{C_2 a}{a-1}$ and the inequality $\sqrt{a+b} \leq \sqrt{a}+\sqrt{b}$, we get that for any $\eta < \bar{\eta}_1$,
\begin{align}
(t_{k+1}-t_{k}) & \geq \frac{C_1^2}{2 C_2^2 \ell |\log (a^{k+1} \eta) |^2 }  + \frac{1}{C_a |\log (a^{k+1} \eta) |} - \frac{C_1^2}{2 C_2^2 \ell |\log (a^{k+1} \eta) |^2 } - \frac{ C_1}{ (\frac{a}{a-1})^{\frac{1}{2}} C_2^{\frac{3}{2}} \sqrt{\ell}\, |\log (a^{k+1} \eta) |^{\frac{3}{2}} } \nonumber \\
& \geq \frac{1}{C_a |\log (a^{k+1} \eta) |}  -  \frac{C_1}{C_2^{\frac{3}{2}} \sqrt{\ell}\,  |\log (a^{k+1} \eta) |^{\frac{3}{2}} }  \label{equpperb-tk}.
\end{align}

Now we will show that for $N$ defined in \eqref{eqdefN}, the sum from $0$ to $N$ of the right-hand side of \eqref{equpperb-tk} is larger than $1$, which implies that $N_0 \leq N$ since $\sum_{k=0}^{N_0} (t_{k+1}-t_k) = 1$. Let us start with the second term in the above inequality. Notice that for $k \leq N$, we always have $|\log (a^{k+1} \eta) | = |\log(\eta)| - (k+1) \log(a) $. Thus we get
\begin{align*}
\sum_{k=0}^{N-1} \frac{1}{|\log (a^{k+1} \eta) |^{\frac{3}{2}} } & \leq \int_0^{N} \frac{1}{ |\log (a^{x+1} \eta) |^{\frac{3}{2}} } \diff x \\
& = \frac{2}{\log(a)} \left( -\frac{1}{\sqrt{|\log (a \eta) | }} + \frac{1}{\sqrt{|\log (a^{N+1} \eta) |}}  \right).
\end{align*}
We have $|\log (a^{N+1} \eta) | = |\log(\eta)| - \lfloor \alpha \frac{|\log(\eta)|}{\log(a)} \rfloor \log(a) \ge (1-\alpha) | \log(\eta) | = e^{-C_2 \frac{a\log a}{(a-1)(1-\varepsilon)} }  | \log(\eta) |$. So we have $ |\log (a^{N+1} \eta) | \rightarrow \infty$ as $\eta \rightarrow 0$ and therefore,
\begin{align*}
 \lim_{\eta \rightarrow 0} \sum_{k=0}^{N} \frac{1}{ |\log (a^{k+1} \eta) |^{\frac{3}{2}} } &\leq  \lim_{\eta \rightarrow 0} \frac{2}{\log(a)} \Big( -\frac{1}{\sqrt{|\log (a \eta) | }} + \frac{1}{\sqrt{|\log (a^{N+1} \eta) |}}  \Big) + \frac{1}{|\log (a^{N+1} \eta) |^{\frac{3}{2}} }\\
& =0 .
\end{align*}
On the other hand,
\begin{align*}
\sum_{k=0}^{N} \frac{1}{C_a |\log (a^{k+1} \eta) |} &  \ge \frac{1}{C_a} \int_{-1}^{N-1} \frac{1}{|\log (a^{x+1} \eta) |} \diff x =  \frac{1}{\log(a) C_a}  \log \left( \frac{|\log ( \eta) |}{|\log (a^{N} \eta) |}    \right) \\
& =  \frac{1}{\log(a) C_a}  \log \left( \frac{|\log(\eta)|}{(1-\alpha) | \log(\eta)| + \alpha | \log(\eta)|- N \log(a) } \right)  .
\end{align*}
We have $N+1 = \lfloor \frac{\alpha | \log(\eta) |}{\log(a)} \rfloor \ge \frac{\alpha | \log(\eta) |}{\log(a)} - 1$, thus $N \log(a) + 2 \log(a) \ge \alpha | \log(\eta) |$. Hence,
\begin{align*}
& \ge  \frac{1}{\log(a) C_a} \log \left( \frac{|\log(\eta)|}{(1-\alpha) | \log(\eta)| + 2\log(a) } \right)  .
\end{align*}
The right hand side converges to $ \log(1/(1-\alpha))/ (C_a \log(a) )$ as $\eta$ goes to 0. Hence, going back to \eqref{equpperb-tk} and summing over $k \in \llbracket 0, N-1 \rrbracket$, we know that there exists $\bar{\eta} \equiv \bar{\eta} (C_1,C_2, \ell, \delta) \leq \bar{\eta}_{1}$ such that for $\eta <  \bar{\eta}$ , we have
\begin{align*}
\sum_{k=0}^{N} (t_{k+1}-t_k) \geq \frac{1}{C_a \log(a)} \log\left( \frac{1}{1-\alpha} \right) (1-\varepsilon) = 1 = \sum_{k=0}^{N_0} (t_{k+1}-t_k)  .
\end{align*}
It follows that $N_0 \leq N$ and thus $t_{N+1}=1$. Hence \eqref{eqstatementEpsBar} is true and we conclude that for $\eta <  \bar{\eta}$, we have 
 $\| f \|_{L^\infty_{[0,1]} E} \leq  \eta^{e^{-C_2}-2 \delta}$ .

\end{appendices}

\end{document}